\newcommand{\C}{{\mathbb C}}
\newcommand{\N}{{\mathbb N}}
\newcommand{\PP}{{\mathbb P}}
\newcommand{\jac}{{\mathcal J}}
\newcommand{\hess}{{\mathcal H}}
\newcommand{\grad}{\nabla}
\newcommand{\bi}{\mathfrak{b}}
\newcommand{\rk}{\operatorname{rk}}
\newcommand{\tr}{\operatorname{tr}}
\newcommand{\GL}{\operatorname{GL}}
\newcommand{\trdeg}{\operatorname{trdeg}}
\newcommand{\tp}{^{\rm t}}
\newcommand{\imp}{{\mathversion{bold}$\Rightarrow$ }}
\newcommand{\bcdot}{$\discretionary{\mbox{$ \cdot $}}{}{}$}
\newcommand{\parder}[3][Default]{
	\frac{\partial \ifthenelse{\equal{#1}{Default}}{}{^{#1}}#2}{
              \partial #3 \ifthenelse{\equal{#1}{Default}}{}{^{#1}}}}
\theoremstyle{plain}
\newtheorem{theorem}{Theorem}[section]
\newtheorem{proposition}[theorem]{Proposition}
\newtheorem{lemma}[theorem]{Lemma}
\newtheorem{corollary}[theorem]{Corollary}
\theoremstyle{definition}
\newtheorem{definition}[theorem]{Definition}
\theoremstyle{remark}
\newtheorem{remark}[theorem]{Remark}
\newtheorem{example}[theorem]{Example}
\theoremstyle{plain}
\numberwithin{equation}{section}
\newcommand{\nolisttopbreak}{\vspace{\topsep}\nobreak\@afterheading}
\newenvironment{listproof}[1][\proofname]{\begin{proof}[#1]\mbox{}\nolisttopbreak}{\end{proof}}
\title{Homogeneous quasi-translations in dimension $5$\footnote{Many
of the results of this article appeared already in Chapter 3 in the author's Ph.D. thesis
\cite{homokema}.}}
\author{Michiel de Bondt\footnote{The author's Ph.D. project was supported by The Netherlands
Organization for Scientific Research (NWO).}
\footnote{Department of Mathematics, Radboud University,
Postbus 9010, 6500 GL Nijmegen, The Netherlands}
\footnote{E-mail: M.deBondt@math.ru.nl}}
\begin{document}

\maketitle

\begin{abstract}
We give a proof in modern language of the following result by Paul Gordan and Max N{\"o}ther:
a homogeneous quasi-translation in dimension $5$ without linear invariants would be linearly
conjugate to another such quasi-translation $x + H$, for which $H_5$ is algebraically independent over 
$\C$ of $H_1, H_2, H_3, H_4$. 
Just like Gordan and N{\"o}ther, we apply this result to classify all homogeneous polynomials 
$h$ in $5$ indeterminates, for which the Hessian determinant is zero.

Others claim to have reproved `the result of Gordan and N{\"o}ther in $\PP^4$' as well,
but their proofs have gaps, which can be fixed by using the above result 
about homogeneous quasi-translations. Furthermore, some of the proofs assume that $h$ is 
irreducible, which Gordan and N{\"o}ther did not. 

We derive some other properties which $H$ would have. One of them is that $\deg H \ge 15$,
for which we give a proof which is less computational than another proof of it by Dayan Liu. 
Furthermore, we show that the Zariski closure of the image of $H$ would be an irreducible component of 
$V(H)$, and prove that every other irreducible component of $V(H)$ would be a $3$-dimensional linear 
subspace of $\C^5$ which contains the fifth standard basis unit vector.
\end{abstract}

\paragraph{Key words:} Quasi-translation, Hessian, determinant zero, homogeneous,
locally nilpotent derivation, algebraic dependence, linear dependence.

\paragraph{MSC 2010:} 14R05, 14R10, 14R20.

\section{Introduction}

Throughout this paper, we will write $x$ for an $n$-tuple 
$(x_1,x_2,\ldots,x_n)$ of variables, where $n$ is a 
positive integer. We write $\jac F$ for the Jacobian matrix of a polynomial map 
$F = (F_1,F_2,\ldots,F_m)$ with respect to $x$, where $m$ is another positive 
integer, i.e.\@
$$
\jac F = \left( \begin{array}{cccc}
\parder{}{x_1} F_1 & \parder{}{x_2} F_1 & \cdots & \parder{}{x_n} F_1 \\
\parder{}{x_1} F_2 & \parder{}{x_2} F_2 & \cdots & \parder{}{x_n} F_2 \\
\vdots             & \vdots             &        & \vdots             \\
\parder{}{x_1} F_m & \parder{}{x_2} F_m & \cdots & \parder{}{x_n} F_m
\end{array}\right)
$$
We write $\hess f$ for the Hessian matrix of a polynomial $f$ with respect to $x$,
i.e.\@
$$
\hess f = \left( \begin{array}{cccc}
\parder[2]{}{x_1} f & \parder{}{x_2} \parder{}{x_1} f & \cdots & \parder{}{x_n} \parder{}{x_1} f \\
\parder{}{x_1} \parder{}{x_2} f & \parder[2]{}{x_2} f & \cdots & \parder{}{x_n} \parder{}{x_2} f \\
\vdots             & \vdots             & \ddots & \vdots             \\
\parder{}{x_1} \parder{}{x_n} f & \parder{}{x_2} \parder{}{x_n} f & \cdots & \parder[2]{}{x_n} f
\end{array}\right)
$$
We see a polynomial $f$ as a polynomial with only one component, so
$$
\jac f = \Big( \parder{}{x_1} f ~ \parder{}{x_2} f ~ \cdots ~ \parder{}{x_n} \Big)
$$
and write $\grad f = (\jac f)\tp$. Here, and in the rest of the article,
$(\cdots)\tp$ stands for the transpose matrix. So
$$
\hess f = \jac (\grad f)
$$
Just like with $x$, we will write $y$ for another $n$-tuple $(y_1,y_2,\ldots,y_n)$
of variables. But unlike $x$ and $y$, $t$ will be just a single variable.

\begin{definition}
Let $F = x + H$ be a polynomial map from $\C^n$ to $\C^n$. Then we call $F$ a
{\em quasi-translation} if $2x - F = x - H$ is the inverse of $F = x + H$.
\end{definition}

The condition that $x - H$ is the inverse of $x + H$ is automatically fulfilled
if $\deg H = 0$, in which case $x + H$ is a regular translation. So a quasi-translation
is a polynomial map which is characterized by a property of a regular translation.

Below are some examples of quasi-translations in dimension $n = 4$:
\begin{align*}
x &+ (x_2^2 x_3 - 3x_3^3 x_4 - 5,0,0,0) \\
x &+ (1,x_4,x_4^2,0) \\
x &+ (x_3^2 -3x_3^3 x_4 - 5, x_3 + 7 x_4^7, 0, 0) \\ 
x &+ \big(b(a x_1 - b x_2),a(a x_1 - b x_2), \\*
  &\quad\quad\!\!b(a x_3 - b x_4),a(a x_3 - b x_4)\big) \qquad \mbox { with } a,b \in \C
\end{align*}

In the next section, we will
show that $x + H$ is a quasi-translation, if and only if $\jac H \cdot H = 0$. This is
equivalent to that for the derivation $D = H_1 \parder{}{x_1} + H_2 \parder{}{x_2} + \cdots + 
H_n \parder{}{x_n}$, $D^2 x_i = 0$ for all $i$, because $D^2 x_i = D H_i = \jac H_i \cdot H$.

Hence quasi-translations correspond to a special kind of locally nilpotent derivations.
Furthermore, invariants of the quasi-translation $x + H$ are just kernel elements of
$D$. Paul Gordan and Max N{\"o}ther call these kernel elements `Functionen $\Phi$' in \cite{gornoet}.

In addition, we can write $\exp(D)$ and $\exp (tD)$ for the automorphisms
corresponding to the maps $x + H$ and $x + tH$ respectively. But in order to make the 
article more readable for readers that are not familiar with derivations, we will omit the 
terminology of derivations further in this article.

In \cite{gornoet}, Gordan and N{\"o}ther studied (homogeneous) quasi-translations to obtain results 
about (homogeneous) polynomials $h$ with $\det \hess h = 0$. One such a result is the classification
of homogeneous polynomials in $5$ indeterminates for which the Hessian determinant is zero. 
This classification has been reproved in \cite{franch} and \cite{garrep},
but only for the case where $h$ is an irreducible polynomial. In \cite[Ch.\@ 7]{russo}, the proof of
\cite{garrep} is extended to the case where $h$ is a square-free polynomial. 
With an easy argument, which the reader may find, one can extend these results to the case where $h$ 
is a power of such a polynomial. But then, you still do not have all polynomials $h$.

However, Francesco Russo, the author of \cite{russo}, told me that by way of 
\cite[Th.\@ 2.2]{cilrussim} one can reduce the general case to the case where $h$ is square-free.
This is indeed true, because of the following.

\begin{proposition}
Let $h \in \C[x]$ and let $\tilde{h}$ be the square-free part of $h$.
\begin{enumerate}[\upshape (i)]

\item If $\det \hess h = 0$, then $\det \hess \tilde{h} = 0$.

\item Suppose that $a_1,a_2,\ldots,a_{n-2} \in \C[x_1,x_2]$ are relatively prime.
Let 
$$
A := \C[x_1,x_2,a_1(x_1,x_2)x_3+a_2(x_1,x_2)x_4+\cdots+a_{n-2}(x_1,x_2)x_n]
$$
If $\tilde{h} \in A$, then $h \in A$.

\end{enumerate}
\end{proposition}

\begin{listproof}
\begin{enumerate}[(i)]

\item This is a special case of \cite[Th.\@ 2.2]{cilrussim}.

\item Suppose that $\tilde{h} \in A$, and let $f$ be an arbitrary factor 
of $h$ over $\C[x]$. It suffices to show that $f \in A$. 

Over $\C(x_1,x_2)$, $h$ is a polynomial in the linear form
$a_1(x_1,x_2)x_3+a_2(x_1,x_2)\bcdot x_4+\cdots+a_{n-2}(x_1,x_2)x_n$. Just
like $\C(x_1,x_2)[x_3]$,
$$
\C(x_1,x_2)[a_1(x_1,x_2)x_3+a_2(x_1,x_2)x_4+\cdots+a_{n-2}(x_1,x_2)x_n]
$$
is factorially closed in $\C(x_1,x_2)[x_3,x_4,\ldots,x_n]$.
Consequently, $f$ is a polynomial over $\C(x_1,x_2)$ in the linear form 
$a_1(x_1,x_2)x_3+a_2(x_1,x_2)x_4+\cdots+a_{n-2}(x_1,x_2)x_n$ as well.

Take $d \ge 0$ arbitrary, and let $\tilde{f}$ be the part of $f$, which has
degree $d$ with respect to $x_3,x_4,\ldots,x_n$. Then $\tilde{f} \in \C[x]$,
and over $\C(x_1,x_2)$, $\tilde{f}$ is a monomial in the linear form 
$a_1(x_1,x_2)x_3+a_2(x_1,x_2)x_4+\cdots+a_{n-2}(x_1,x_2)x_n$. From 
Gauss's Lemma, it follows that $\tilde{f} \in A$, As $d$ was arbitrary,
we can conclude that $f \in A$. \qedhere

\end{enumerate}
\end{listproof}

The connection between quasi-translations and polynomial Hessians with determinant zero,
which comes from \cite{gornoet}, is given at the beginning of section \ref{hess}. 
This connection is used in \cite{garrep} and \cite[Ch.\@ 7]{russo} as well, and appears
as \cite[p.\@ 33]{garrep} and \cite[Lem.\@ 7.3.7]{russo} respectively.
\cite{garrep} and \cite[Ch.\@ 7]{russo} contain classifications in dimensions less than $5$ as well,
but with the same limitations as above on the factorization of $h$. These limitations are not 
present in \cite{gnlossen}, which follows the approach of \cite{gornoet} in proving the
classifications in dimensions less than $5$.

In \cite{watanabe}, it is claimed that $\rk \jac H \ne 3$ if $x + H$ is a quasi-translation
in dimension $n = 5$, but this is not true. Hence the proof in \cite{watanabe} of the classification 
of homogeneous polynomials in $5$ indeterminates, for which the Hessian determinant is zero,
has a gap. The paper \cite{franch} has an error and hence a gap on the same point.
This gap can be fixed by proving that $\rk \jac H \ne 3$ indeed, if $x + H$ 
is associated to a polynomial for which the Hessian determinant is zero, which can be
done by way of the results on linear invariants of quasi-translations, as given in
\cite{gornoet} and this paper: see remark \ref{rem} at the end of section \ref{hess}.

\cite{garrep} and \cite[Ch.\@ 7]{russo} on one hand, and \cite[Th.\@ 5.3.7]{homokema} 
on the other hand, treat the case where $\rk \jac H = 3$ incorrectly as well. But both 
incorrect treatments are only on subcases which do not overlap, so \cite[Ch.\@ 7]{russo}
and \cite[Th.\@ 5.3.7]{homokema} fix each other's errors.
The error in \cite{garrep} and \cite[Ch.\@ 7]{russo} can be repaired by way of theorem \ref{A}, 
which comes from \cite{gornoet}. The error in \cite[Th.\@ 5.3.7]{homokema} can be 
repaired by way of lemma \ref{B}, which gives a simpler argument than that in \cite{gornoet}.

It is easy to show that for any homogeneous polynomial map $H$ such that $\rk \jac H = 1$, $x + H$ has 
$n-1$ independent linear invariants. In \cite{gornoet}, Gordan and N{\"o}ther proved that any homogeneous 
quasi-translation $x + H$ such that $\rk \jac H = 2$ has at least $2$ independent linear invariants. 
In their study of homogeneous quasi-translations $x + H$ in dimension $n = 5$ with $\rk \jac H = 3$ in 
\cite{gornoet}, Gordan and N{\"o}ther distinguished two cases, namely `Fall a)' and `Fall b)', of which 
`Fall a)' had two subcases, which we indicate by a1) and a2).
 
The quasi-translations of subcase a1) in \cite{gornoet} are the homogeneous quasi-trans\-lations $x + H$ in
dimension $5$ with Jacobian rank three, for which the Zariski closure of the image of $H$ is a $3$-dimensional 
linear subspace of $\C^5$.
The quasi-translations of case b) in \cite{gornoet} are the homogeneous quasi-translations in
dimension $5$ with Jacobian rank three, which are linearly conjugate to another such quasi-translation 
$x + H$, for which $H_5$ is algebraically independent over $\C$ of $H_1, H_2, H_3, H_4$, but for which the
Zariski closure of the image of $H$ is not a $3$-dimensional linear subspace of $\C^5$.

The quasi-translations of subcase a2) in \cite{gornoet} are categorized by a somewhat technical property,
which is the existence of $p^{(1)}$ and $p^{(2)}$ as in (iii) of theorem \ref{Lpth}. Let us just say for
now that they are the homogeneous quasi-translations in dimension $5$ with Jacobian rank three, which do 
not belong to case b) or subcase a1) in \cite{gornoet}. 
As a consequence of theorem \ref{Lpth}, we deduce in corollary \ref{Lpcor} that quasi-translations of 
case a2) 
in \cite{gornoet} have at least one linear invariant, by showing that the linear span of the image of $H$ is 
$4$-dimensional. Having reasoned about these three cases, one can wonder whether they actually exist.
 
\begin{example} \label{a12b}
The following three $H$'s are chosen in such a way, that $x + H$ with $n = 5$ 
is a quasi-translation which belongs to the above-described case a1), a2), and b), 
respectively.
\begin{itemize}

\item[a1)] $H = (x_4^2,x_4x_5,x_1x_5-x_2x_4,0,0)$,

\item[a2)] $H = (x_5^2(ax_1-x_5^2x_2),a(ax_1-x_5^2x_2),x_5^2(ax_3-x_5^2x_4),
      a(ax_3-x_5^2x_4),0)$ with $a = x_1x_4-x_2x_3$,

\item[b)] $H = (x_5^5,bx_5^3,b^2x_5,-b^2x_1+2bx_2x_5^2-x_3x_5^4,0)$ with 
      $b = x_1x_3-x_2^2+x_4x_5$.

\end{itemize}
The quasi-translations for a1) and a2) were found by using techniques of \cite[\S 2]{debunk}.
The quasi-translations for b) was found by applying propositions \ref{qtconj} and
\ref{qthmg}, on the quasi-translation $x + H$ with $n = 4$ and $H = (1,x_4,x_4^2,0)$.
\end{example}

An unsolved question is whether a homogeneous quasi-translation in dimension $5$ always has
a linear invariant or not. We reprove the following results obtained in \cite{gornoet} in modern
language: a homogeneous quasi-translation in dimension $5$ without a linear invariant can only belong 
to case b) in \cite{gornoet}. Furthermore, we give a somewhat less computational proof of the 
result in \cite{liu} that a homogeneous quasi-translation in dimension $5$ without a linear 
invariant must have degree $15$ at least. 

In dimension $6$ and up, homogeneous 
quasi-translations do not need to have linear invariants, see \cite[Th.\@ 2.1]{debunk}.
If we substitute $x_5 = 1$ in the quasi-translations of cases a2) and b) in example \ref{a12b} and
remove the last component, we get non-homogeneous quasi-translations in dimension $4$ without
linear invariants.

The rest of the paper is organized as follows.
In the next section, we show some basic concepts about quasi-translations. 

In section \ref{imqt},
we prove some geometric results about homogeneous quasi-translations $x + H$ for which $\rk \jac H \le 
(n+1) / 2$. As a consequence, we deduce that a homogeneous quasi-translation in dimension $5$ without
linear invariants can only belong to case b) in \cite{gornoet}.

In section \ref{hess}, we apply the result that a homogeneous quasi-translation in dimension $5$ without a 
linear invariant can only belong to case b) in \cite{gornoet}, to classify all homogeneous polynomials in 
$5$ indeterminates for which the Hessian determinant vanishes. 

In section \ref{Fallb}, we study homogeneous 
quasi-translations in dimension $5$ that belong to case b) in \cite{gornoet}, with the purpose of getting 
properties of possible homogeneous quasi-translations in dimension $5$ without linear invariants. One of
these properties is that the degree of such a quasi-translation is at least $15$.

In section \ref{kerqt}, we prove some geometric results about quasi-translations which gives us the
following result about quasi-translations which belong to case b) in \cite{gornoet}: the Zariski 
closure of the image of $H$ is an irreducible component of $V(H)$, which contains a linear 
$1$-dimensional subspace $L$ of $\C^5$, such that every other irreducible component of 
$V(H)$ is a $3$-dimensional linear subspace of $\C^5$ which contains $L$. 
Here, $V(H)$ is the set of common zeroes of $H_1, H_2, \ldots, H_n$.

\section{Some basics about quasi-translations}

In proposition \ref{qtprop} below, we will show that quasi-translations are also
characterized by $H(x+tH) = H$ and by that $\jac H \cdot H$ is the zero vector. We need
the following lemma to prove proposition \ref{qtprop}.

\begin{lemma} \label{qtlem}
Assume that $x + H$ is a polynomial map and $f \in \C[x]$. Then
\begin{equation}
f(x+tH) = f(x) \label{fxtH}
\end{equation}
in case one of the following assumptions is satisfied.
\begin{enumerate}[\upshape (1)]

\item $x + H$ is a quasi-translation and $f(x + H) = f(x)$,

\item $\jac H \cdot H = (0^1,0^2,\ldots,0^n)$ and $\jac f \cdot H = 0$.

\end{enumerate}
\end{lemma}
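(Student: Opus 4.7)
I would treat the two hypotheses by essentially different arguments.

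\emph{Case (1).} The starting point is that the quasi-translation identity $F^{-1}(F(x)) = x$ with $F = x+H$ and $F^{-1} = x-H$ unfolds to the polynomial identity $H(x+H) = H$. Combined with the hypothesis $f(x+H) = f(x)$, this allows me to iterate: substituting $x \mapsto x + H$ in $f(x+kH) = f(x)$ gives $f\bigl((x+H) + kH(x+H)\bigr) = f(x+H)$, and the identity $H(x+H) = H$ reduces both sides to $f(x+(k+1)H) = f(x)$. By induction, $f(x+kH) = f(x)$ for every $k \in \N$. The polynomial $f(x+tH) - f(x) \in \C[x][t]$, seen as a polynomial in $t$, has infinitely many roots, hence vanishes identically, which is (\ref{fxtH}).

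\emph{Case (2).} Here I would use the (exact) Taylor expansion of $f$ at $x$ evaluated at $y = x + tH$, which gives
\[
f(x+tH) = \sum_{k\ge 0} t^k c_k, \qquad k!\,c_k = \sum_{i_1,\ldots,i_k} H_{i_1}\cdots H_{i_k}\,\partial_{i_1}\cdots\partial_{i_k} f.
\]
Setting $D := \sum_i H_i\,\partial_i$, the two hypotheses read $DH_i = 0$ for all $i$ and $Df = 0$. The key auxiliary identity I would prove by induction on $k$ is that $k!\,c_k = D^k f$. Expanding $D(D^{k-1}f)$ by the Leibniz rule, every term in which the outer $\partial_j$ falls on an inner $H_{i_l}$ carries a factor $\sum_j H_j\,\partial_j H_{i_l} = DH_{i_l} = 0$, while the remaining terms regroup into precisely the ordered expression displayed above. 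Combined with $D^k f = D^{k-1}(Df) = 0$ for all $k \ge 1$, this forces $c_k = 0$ for $k \ge 1$, so $f(x+tH) = c_0 = f(x)$.

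\emph{Main obstacle.} The content of case (1) is light once $H(x+H) = H$ is in hand: the iteration and the ``infinitely many integer roots'' argument are essentially formal. The real work is the identity $k!\,c_k = D^k f$ in case (2): one has to unpack the Leibniz expansion of $D^k f$ carefully and recognise that all unwanted terms collect into factors $DH_{i_l} = 0$. This bookkeeping is routine but is where I expect to spend most of the writing; a more conceptual alternative would be to note that $DH_i = 0$ makes multiplication by each $H_i$ commute with $D$ as an operator, from which $D^k = \sum H_{i_1}\cdots H_{i_k}\,\partial_{i_1}\cdots\partial_{i_k}$ follows immediately.
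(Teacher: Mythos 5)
Your argument is correct in both cases. For hypothesis (1) you take essentially the paper's route: both proofs establish $f(x+mH)=f(x)$ for all $m\in\N$ by induction (the paper via the identity $(x+mH)\circ(x+H)=x+(m+1)H$, you via $H(x+H)=H$; these amount to the same computation, since both come from $(x-H)\circ(x+H)=x$) and then conclude that $f(x+tH)-f(x)\in\C[x][t]$ has infinitely many roots in $t$, hence vanishes. For hypothesis (2), however, you take a genuinely different path. The paper never expands $f(x+tH)$ as a series in $t$; instead it uses the chain rule and $\jac H\cdot H=0$ to derive the relation $\jac\big(f(x+tH)-f(x)\big)\cdot H=\parder{}{t}f(x+tH)$ and then runs a $t$-adic divisibility argument: if $t$ divided the right-hand side exactly $r<\infty$ times, it would divide $f(x+tH)-f(x)$, and hence the left-hand side, more than $r$ times, a contradiction; so both sides are zero and \eqref{fxtH} follows. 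Your proof instead computes the full Taylor expansion and establishes $k!\,c_k=D^kf$ for the derivation $D=\sum_i H_i\parder{}{x_i}$, using $DH_i=0$ to annihilate the cross terms in the Leibniz expansion; this is exactly the $\exp(tD)$ picture the paper alludes to in its introduction but deliberately suppresses. Both arguments are sound over $\C$. Yours makes the locally nilpotent derivation structure explicit and yields the stronger identity $f(x+tH)=\sum_{k\ge 0}t^kD^kf/k!$, at the cost of the bookkeeping you flag (your closing observation that multiplication by each $H_i$ commutes with $D$ disposes of it cleanly), while the paper's divisibility trick is shorter and sidesteps the expansion entirely.
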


\begin{listproof}
\begin{enumerate}[(1)]

\item Since $(x - H) \circ (x + H) = x$, we see that
\begin{align*}
(x + mH) \circ (x + H) &= \big((m+1)x - m(x - H)\big) \circ (x + H) \\
&= (m+1)(x + H) - mx = x + (m+1)H
\end{align*}
By induction on $m$, $x + mH$ is equal to the composition of $m$ copies of 
$x + H$ for all $m \in \N$. Using $f(x + H) = f(x)$ $m$ times, we obtain
$$
f(x + mH) = f\big((x+H)^{\circ m}\big) = f\big((x+H)^{\circ (m-1)}\big)
= \cdots = f(x)
$$
for all $m \in \N$. This is only possible if \eqref{fxtH} holds.

\item By the chain rule and $\jac H\cdot H = (0^1,0^2,\ldots,0^n)$, we get
\begin{align*}
\jac f(x + tH) \cdot H &= (\jac f)|_{x=x+tH} \cdot (I_n + t \jac H) \cdot H
\nonumber \\
&= (\jac f)|_{x=x+tH} \cdot H = \parder{}{t} f(x + tH)
\end{align*}
where $I_n$ is the unit matrix of size $n$. Since $\jac f \cdot H = 0$,
it follows from the above that 
\begin{equation} \label{qtinv}
\jac \big(f(x + tH) - f(x)\big) \cdot H = \parder{}{t} f(x + tH)
\end{equation}
Suppose that $t$ divides the right hand side of \eqref{qtinv} exactly $r < \infty$ times.
Then $t$ divides $f(x+tH) - f(x)$ more than $r$ times. Hence $t$ divides the left hand
side of \eqref{qtinv} more than $r$ times as well, which is a contradiction. 
So both sides of \eqref{qtinv} are zero. Since the right hand side of \eqref{qtinv} is zero, 
we get \eqref{fxtH}. \qedhere

\end{enumerate}
\end{listproof}

\begin{proposition} \label{qtprop}
Let $H: \C^n \rightarrow \C^n$ be a polynomial map. 
Then the following properties are equivalent:
\begin{enumerate}[\upshape (1)]

\item $x + H$ is a quasi-translation, 

\item $H(x + tH) = H$ (where $t$ is a variable),

\item $\jac H \cdot H = (0^1,0^2,\ldots,0^n)$.

\end{enumerate}
Furthermore, if any of {\upshape (1)}, {\upshape (2)} and {\upshape (3)} is satisfied, then
\begin{equation} \label{fxtHeqv}
f(x + H) = f(x) \Longleftrightarrow f(x + tH) = f(x) \Longleftrightarrow \jac f \cdot H = 0
\end{equation}
for all $f \in \C[x]$, and 
\begin{equation} \label{qtnilp}
(\jac H)|_{x=x-t\jac H} = (\jac H) + t (\jac H)^2 + t^2 (\jac H)^3 + \cdots
\end{equation}
\end{proposition}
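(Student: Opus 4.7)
The plan is to establish the equivalence of (1)--(3) by the cycle (1)$\Rightarrow$(2)$\Rightarrow$(3)$\Rightarrow$(1), leaning on Lemma \ref{qtlem} at each step. For (1)$\Rightarrow$(2), the hypothesis $(x-H)\circ(x+H) = x$ immediately yields $H(x+H) = H$ upon rewriting; then applying Lemma \ref{qtlem}(1) componentwise with $f = H_i$ upgrades this to $H_i(x+tH) = H_i$ for every $i$, which is exactly (2). For (2)$\Rightarrow$(3), I would differentiate both sides of $H(x+tH) = H$ with respect to $t$ using the chain rule and evaluate at $t = 0$, reading off $\jac H \cdot H = 0$. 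For (3)$\Rightarrow$(1), the $i$-th row of $\jac H \cdot H = 0$ is precisely $\jac H_i \cdot H = 0$, so Lemma \ref{qtlem}(2) applied to $f = H_i$ gives $H_i(x+tH) = H_i$; specializing $t = 1$ yields $H(x+H) = H$, whence $(x-H)\circ(x+H) = (x+H) - H(x+H) = x$.

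With the equivalence in hand, the chain \eqref{fxtHeqv} falls out by reusing the same lemma: $f(x+H) = f(x) \Rightarrow f(x+tH) = f(x)$ is Lemma \ref{qtlem}(1); $f(x+tH) = f(x) \Rightarrow \jac f \cdot H = 0$ is differentiation in $t$ at $t = 0$; $\jac f \cdot H = 0 \Rightarrow f(x+tH) = f(x)$ is Lemma \ref{qtlem}(2); and specializing $t = 1$ closes the loop back to $f(x+H) = f(x)$.

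For \eqref{qtnilp}, the idea is to replace $t$ by $-t$ in (2) to get $H(x - tH) = H$, and to differentiate this identity with respect to $x$ via the chain rule, which produces the matrix equation
$$\jac H\bigl|_{x = x - tH} \cdot (I_n - t\,\jac H) = \jac H.$$
In the ring $M_n(\C[x][[t]])$, the factor $I_n - t\,\jac H$ is invertible with geometric-series inverse $\sum_{k \ge 0} t^k (\jac H)^k$, so multiplying on the right gives the claimed expansion as a formal power series in $t$.

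The main thing to be careful about is this last step: the left-hand side of \eqref{qtnilp} is a polynomial in $t$, so the formal power series on the right must actually truncate. This is the only real subtlety of the proposition, and it deserves to be pointed out that $\jac H$ is consequently nilpotent as a matrix over $\C[x]$ — this is not an assumption but a byproduct of the identity. Everything else is bookkeeping on top of Lemma \ref{qtlem}, so I would not expect any genuine obstacle once the cycle of implications is laid out cleanly.
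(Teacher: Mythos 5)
Your proposal is correct and follows essentially the same route as the paper: both rest on Lemma \ref{qtlem} applied to $f = H_i$ for the implications into and out of (2), extract $\jac H\cdot H=0$ from the $t$-derivative (equivalently the coefficient of $t^1$) of $H(x+tH)=H$, derive \eqref{fxtHeqv} by the identical specializations, and obtain \eqref{qtnilp} by taking the Jacobian of (2), substituting $t\mapsto -t$, and inverting $I_n - t\,\jac H$ by the geometric series. Your added remark that the series must truncate, so that $\jac H$ is nilpotent, is a correct observation that the paper leaves implicit here but uses later.
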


\begin{proof}
The middle hand side of \eqref{fxtHeqv} gives the left hand side by substituting
$t = 1$ and the right hand side by taking the coefficient of $t^1$. Lemma \ref{qtlem}
gives the converse implications by way of (1) and (3).
Hence \eqref{fxtHeqv} follows as soon as we have the equivalence of (1), (2) and (3).

By taking the Jacobian of (2), we get $(\jac H)|_{x=x+tH} \cdot (I_n + t \jac H) 
= \jac H$, which gives \eqref{qtnilp} after substituting $t = -t$.
Therefore, it remains to show that (1), (2) and (3) are equivalent.
\begin{description}

\item [(1) \imp (2)]
Assume (1). Since $x = (x - H) \circ (x + H) = x + H - H(x+H)$, we see that
$H(x + H) = H$, and (2) follows by taking $f = H_i$ for each $i$ in (1) of lemma \ref{qtlem}.

\item [(2) \imp (1)]
Assume (2). Then
\begin{align} \label{ixtH}
(x - tH) \circ (x + tH) &= (x + tH) - tH(x + tH) \nonumber \\
&= x + tH - tH = x
\end{align}
which gives (1) after substituting $t = 1$. 

\item[(2) \imp (3)]
Assume (2). By taking the coefficient of $t^1$ of (2), we get (3).

\item[(3) \imp (2)]
Assume (3). By taking $f = H_i$ in (2) of lemma \ref{qtlem}, we get (2). \qedhere

\end{description}
\end{proof}

Proposition \ref{irred} below gives a tool to obtain quasi-translations
$x + H$ over $\C$ for which $\gcd\{H_1,H_2,\ldots,H_n\} = 1$ from arbitrary
quasi-translations $x + H$ over $\C$.

\begin{proposition} \label{irred}
Assume $x + gH$ is a quasi-translation over $\C$, where $g \in \C[x]$ is nonzero. 
Then $x + H$ is a quasi-translation over $\C$ as well. Furthermore, the invariants
of $x + H$ and $x + gH$ are the same. If additionally $H$ is homogeneous of positive degree, 
then $\rk \jac gH = \rk \jac H$.
\end{proposition}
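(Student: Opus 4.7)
The plan is to start from proposition \ref{qtprop}(2) applied to $x+gH$, which gives the identity $g(x+tgH)\,H_i(x+tgH) = gH_i$ in $\C[x,t]$ for each $i$, and then reparametrize by substituting $t \mapsto s/g$. A priori this substitution lives in $\C(x)[s]$, but both sides of the resulting identity $g(x+sH)\,H_i(x+sH) = gH_i$ are polynomial substitutions into polynomials and therefore lie in $\C[x,s]$, so the identity actually holds in $\C[x,s]$. In the polynomial ring $\C[x][s]$ over the integral domain $\C[x]$, $s$-degrees are additive under multiplication; the right-hand side has $s$-degree $0$, so for every $i$ with $H_i \neq 0$ (the case $H_i = 0$ is trivial) both factors on the left must have $s$-degree $0$. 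Specializing at $s=0$ identifies them as $g$ and $H_i$ themselves, so $H(x+sH) = H$, which by proposition \ref{qtprop}(2) means $x+H$ is a quasi-translation.

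The invariants of $x+H$ and of $x+gH$ then coincide as an immediate consequence of \eqref{fxtHeqv}: they are $\{f \in \C[x] : \jac f \cdot H = 0\}$ and $\{f \in \C[x] : \jac f \cdot gH = 0\}$ respectively, and these sets are equal since $g \neq 0$ in the domain $\C[x]$.

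For the rank equality, I would assume $H$ is homogeneous of degree $d \geq 1$ and work with column spans over $\C(x)$. The product rule gives $\jac(gH) = g\,\jac H + H\,\jac g$ (with $\jac g$ a row vector), so $\jac(gH)$ and $g\,\jac H$ differ by the rank-at-most-$1$ matrix $H\,\jac g$ whose column span lies in $\C(x)\,H$. Euler's identity $\jac H \cdot x = dH$ places $H$ in the column span of $\jac H$; and Euler applied to $gH$ gives $\jac(gH)\cdot x = (dg + \jac g\cdot x)\,H$, where $dg + \jac g\cdot x = \sum_\alpha c_\alpha(d+|\alpha|)\,x^\alpha$ (writing $g = \sum_\alpha c_\alpha x^\alpha$) is nonzero because each $d+|\alpha|\geq 1$ and $g\neq 0$. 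Hence $H$ also lies in the column span of $\jac(gH)$, so adding or subtracting the rank-$1$ correction $H\,\jac g$ cannot change the column span in either direction. The two column spans coincide, and $\rk\jac(gH) = \rk\jac H$ follows. The main obstacle is the substitution and $s$-degree argument in the first paragraph; the rest follows formally from proposition \ref{qtprop} and Euler's identity.
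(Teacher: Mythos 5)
Your proof is correct, and while the first step (the substitution $t\mapsto t/g$ followed by the $t$-degree additivity argument) is exactly the paper's argument, your other two steps take genuinely different routes. For the invariants, the paper substitutes $t=g$ into $f(x+tH)=f(x)$ and $t=g^{-1}$ into $f(x+tgH)=f(x)$; you instead pass through the characterization $\jac f\cdot H=0$ from \eqref{fxtHeqv} and cancel the nonzero factor $g$ in the integral domain $\C[x]$ --- equally valid and arguably cleaner, though it silently uses that \emph{both} maps are quasi-translations so that \eqref{fxtHeqv} applies on both sides (which you have at that point). For the rank equality the divergence is more substantial: the paper reduces $\rk\jac(gH)=\rk\jac H$ to $\trdeg_{\C}\C(gH)=\trdeg_{\C}\C(H)$ via Proposition 1.2.9 of \cite{arnobook}, and proves the latter by an induction on homogeneous parts showing $R(gH)=0\Leftrightarrow R(H)=0$; you stay entirely at the level of matrices over $\C(x)$, writing $\jac(gH)=g\,\jac H+H\,\jac g$ and using Euler's identity (for $\jac H\cdot x=dH$ and for $\jac(gH)\cdot x=(dg+\jac g\cdot x)H$ with $dg+\jac g\cdot x\neq 0$ since $d\ge 1$) to place $H$ in both column spans, whence the rank-one correction $H\,\jac g$ moves neither span. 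Your linear-algebra argument is more elementary and self-contained --- it does not invoke the rank--transcendence-degree correspondence at all and correctly handles non-homogeneous $g$ --- whereas the paper's argument establishes the stronger algebraic fact that $H$ and $gH$ satisfy the same polynomial relations, which is closer in spirit to how the proposition is used elsewhere (e.g.\ in proposition \ref{hmgprop} and theorem \ref{gnqtdim5}, where algebraic relations among the components, not just the rank, are what matter).
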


\begin{proof}
By (1) $\Rightarrow$ (2) of proposition \ref{qtprop}, we see that
$g(x+tgH) \cdot H_i(x+tgH) = g \cdot H_i$. We can substitute $t = g^{-1}t$ in it,
to obtain that
\begin{align*}
\deg_t H_i(x+tH) &\le \deg_t g(x+tH) + \deg_t H_i(x+tH) \\ &= \deg_t (gH_i)(x+tH) \le 0
\end{align*}
for each $i$, which is exactly $H(x+tH) = H$. Hence $x + H$ is 
a quasi-translation on account of (2) $\Rightarrow$ (1) of 
proposition \ref{qtprop}.

Assume $f$ is an invariant of $x + H$. Then $f(x + tH) = f(x)$ on account of 
\eqref{fxtHeqv}, and by substituting $t = g$ we see that $f$ is an invariant of 
$x + gH$. The converse follows in a similar manner by substituting $t = g^{-1}$.

Suppose that $H$ is homogeneous of positive degree. From Proposition 1.2.9 of either \cite{arnobook} 
or \cite{homokema}, we deduce that in order to prove that $\rk \jac gH = \rk \jac H$, 
it suffices to show that $\trdeg_{\C} \C(gH) = \trdeg_{\C} \C(H)$.
For that purpose, we show that for any $R \in \C[y]$, both $R(gH)$ and $R(H)$ are zero if one of them is.

Suppose that either $R(gH) = 0$ or $R(H) = 0$ for some $R \in \C[y]$, say of degree $r$. Let $\bar{R}$
be the leading homogeneous part of $R$. If $R(H) = 0$, then $\bar{R}(H) = 0$ because $H$ is homogeneous
of positive degree. If $R(gH) = 0$, then $\deg \bar{R}(gH) < r \deg gH = 
\deg g^r + r \deg H$, so $\deg \bar{R}(H) < r \deg H$, which is only possible if $\bar{R}(H) = 0$.
So $\bar{R}(gH) = \bar{R}(H) = 0$ in any case. Hence either $(R-\bar{R})(gH) = 0$ or $(R-\bar{R})(gH) = 0$. 
By induction to the number of homogeneous parts of $R$, it follows that $R(gH) = R(H) = 0$ indeed.
\end{proof}

Proposition \ref{qtconj} gives a criterion about preservation of the quasi-translation property with respect
to conjugation with an invertible polynomial map.

\begin{proposition} \label{qtconj}
Assume $x + H$ is a quasi-translation in dimension $n$ over $\C$, and 
$F$ is an invertible polynomial map in dimension $n$ over $\C$
with inverse $G$. Then
$$
G \circ (x + H) \circ F
$$
is a quasi-translation as well, if and only if $\deg_t G_i(x+tH) \le 1$ for all $i$.
In particular, if $T$ is an invertible matrix of size $n$ over $\C$, we have that
$$
x + T^{-1} H(Tx) = T^{-1} \big(Tx + H(Tx)\big) = T^{-1}x \circ (x + H) \circ Tx
$$
is a quasi-translation as well.
\end{proposition}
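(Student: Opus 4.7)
The plan is to let $\tilde H := G \circ (x + H) \circ F - x$, so that the candidate quasi-translation is $x + \tilde H$, and then to show that both the quasi-translation property and the degree condition $\deg_t G_i(x + tH) \le 1$ are equivalent to the single polynomial identity
\[
G(F + tH(F)) = x + t\tilde H
\]
in $\C[x][t]^n$. Since $F$ induces a $\C$-algebra automorphism of $\C[x]$ (with inverse given by substitution of $G$), the substitution $x \mapsto F$ preserves $t$-degrees, so $\deg_t G_i(F + tH(F)) = \deg_t G_i(x + tH)$. Combined with the fact that the constant-in-$t$ term of $G(F + tH(F))$ is $G(F) = x$, the degree bound $\deg_t G_i(x + tH) \le 1$ for all $i$ is equivalent to the displayed identity.

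For the forward direction, if the identity holds, then substituting $t = -1$ yields $G \circ (x - H) \circ F = x - \tilde H$. Since $x - H$ is the inverse of $x + H$, conjugating by $F$ and $G$ shows that $x - \tilde H$ is the inverse of $x + \tilde H$, so the latter is a quasi-translation.

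Conversely, assume $x + \tilde H$ is a quasi-translation. From the argument at the beginning of the proof of lemma \ref{qtlem}(1), $(x + H)^{\circ m} = x + mH$ and $(x + \tilde H)^{\circ m} = x + m\tilde H$ for every $m \in \N$. Since conjugation commutes with composition, $\bigl(G \circ (x + H) \circ F\bigr)^{\circ m} = G \circ (x + mH) \circ F$, giving $G(F + mH(F)) = x + m\tilde H$ for every $m \in \N$. Both sides are polynomials in $m$ agreeing at infinitely many integer values, so the corresponding polynomial identity in $t$ holds, closing the loop.

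The ``in particular'' statement is immediate, since for $F = Tx$ and $G = T^{-1}x$, each $G_i(x + tH) = (T^{-1}x)_i + t(T^{-1}H)_i$ is already linear in $t$. I do not expect any genuine obstacle; the one bookkeeping point is the degree-preservation under $x \mapsto F$, which is routine given that $F$ yields an automorphism of $\C[x]$.
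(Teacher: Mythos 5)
Your proof is correct and follows essentially the same route as the paper: the forward direction rests on $G(x+tH)$ being affine in $t$ and on evaluating at $t=\pm1$, and the converse reduces, via the iteration identity $(x+H)^{\circ m}=x+mH$ from the proof of lemma \ref{qtlem}, to the observation that a polynomial in $t$ agreeing with an affine function at all natural numbers is itself affine. The only cosmetic difference is in the converse, where the paper derives the recurrence $G(x+(m+1)H)-G(x+mH)=G(x+mH)-G(x+(m-1)H)$ and telescopes, whereas you iterate the conjugated map directly using that conjugation commutes with composition; both arguments hinge on the same final interpolation step.
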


\begin{proof}
Assume first that $\deg_t G_i(x+tH) \le 1$ for all $i$. Then we can write
$$
G(x+tH) = G^{(0)} + tG^{(1)}
$$
Notice that $G^{(0)} = G(x+tH)|_{t=0} = G$. Hence
$$
G \circ (x+tH) \circ F = G^{(0)}(F) + t G^{(1)}(F) = G(F) + t G^{(1)}(F) = x + t G^{(1)}(F)
$$
By substituting $t = 1$ on both sides, we obtain that $G \circ (x + H) \circ F
= x + G^{(1)}(F)$ and substituting $t = -1$ tells us that its inverse
$G \circ (x - H) \circ F$ is equal to $x - G^{(1)}(F)$. Thus $G \circ (x + H) \circ F$
is a quasi-translation indeed.

Assume next that $G \circ (x + H) \circ F$ is a quasi-translation $x + \tilde{H}$. Then
$x - \tilde{H}$ is the inverse of $G \circ (x + H) \circ F$, which is $G \circ (x - H) \circ F$.
Hence
$$
\tilde{H} = \big(G \circ (x + H) \circ F\big) - x = x - \big(G \circ (x - H) \circ F\big)
$$
Substituting $x = G(x + mH)$ in the above gives
$$
G\big(x + mH + H(x + mH)\big) - G(x + mH) = G(x + mH) - G\big(x + mH - H(x + mH)\big)
$$
Since $H(x + mH) = H$ on account of (1) $\Rightarrow$ (2) of proposition \ref{qtprop}, we obtain
$$
G(x + (m+1)H) - G(x + mH) = G(x + mH) - G(x + (m-1)H)
$$
By induction on $m$, we get
$G(x + (m+1)H) - G(x + mH) = G(x + H) - G(x)$ for all $m \in \N$, whence
$$
G(x + \tilde{m}H) - G(x) = \sum_{m=0}^{\tilde{m}-1} G(x + (m+1)H) - G(x + mH) 
= \tilde{m} (G(x + H) - G(x))
$$
for all $\tilde{m} \in \N$. This is only possible if $G(x + t H) - G(x) = t (G(x + H) - G(x))$. 
Hence $\deg_t G(x + t H) \le 1$, as desired.
\end{proof}

Proposition \ref{qthmg} gives a tool to obtain homogeneous quasi-translations 
over $\C$ from arbitrary quasi-translations $x + H$ over $\C$. Hence we can
obtain results about arbitrary quasi-translations by studying homogeneous ones.

\begin{proposition} \label{qthmg}
Assume $x + H$ is a quasi-translation over $\C$ in dimension $n$, and 
$$
d \ge \deg H := \max\{\deg H_1, \deg H_2, \ldots, \deg H_n\}
$$
Then 
$$
(x,x_{n+1}) + x_{n+1}^d \big(H(x_{n+1}^{-1}x), 0\big)
$$
is a {\em homogeneous} quasi-translation over $\C$ in dimension $n + 1$.
\end{proposition}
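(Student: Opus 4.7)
The plan is to verify property (2) of Proposition \ref{qtprop} for the map
$(x, x_{n+1}) + \tilde H$, where I abbreviate
$\tilde H := x_{n+1}^d\bigl(H(x_{n+1}^{-1}x),\, 0\bigr)$, and then invoke the
implication (2) $\Rightarrow$ (1). First I would check that $\tilde H$ is a
polynomial map homogeneous of degree $d$. For each $i \le n$, the component
$\tilde H_i = x_{n+1}^d H_i(x_{n+1}^{-1}x)$ is the standard homogenization of
$H_i$ to degree $d$ with respect to the new variable $x_{n+1}$, which is a
bona fide polynomial in $\C[x, x_{n+1}]$ precisely because $d \ge \deg H_i$;
and it is homogeneous of degree $d$ by construction. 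The last component
$\tilde H_{n+1} = 0$ is trivially homogeneous. So homogeneity is free.

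For the quasi-translation property, I would start from the polynomial identity
$H(x + tH) = H$ in $\C[x][t]$ provided by (1) $\Rightarrow$ (2) of
Proposition \ref{qtprop}, and substitute the $n$-tuple of rational
functions $x \mapsto x_{n+1}^{-1}x$ together with $t \mapsto t\, x_{n+1}^{d-1}$.
This is a legal substitution in $\C(x, x_{n+1}, t)$ and yields
\[
H\bigl(x_{n+1}^{-1}x + t\, x_{n+1}^{d-1}\, H(x_{n+1}^{-1}x)\bigr)
= H(x_{n+1}^{-1}x).
\]
Multiplying both sides by $x_{n+1}^d$ and factoring $x_{n+1}^{-1}$ out of the
inner argument on the left, this reads
\[
x_{n+1}^d\, H_i\!\bigl(x_{n+1}^{-1}(x + t\,\tilde H^{(1,\dots,n)})\bigr)
= \tilde H_i
\qquad (i \le n),
\]
and since $(x, x_{n+1}) + t\tilde H = (x + t\,\tilde H^{(1,\dots,n)},\, x_{n+1})$
(the $(n+1)$-coordinate is unchanged because $\tilde H_{n+1} = 0$), the left
hand side is exactly $\tilde H_i((x, x_{n+1}) + t\tilde H)$. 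For the
$(n+1)$-component, the identity $0 = 0$ is trivial. Hence
$\tilde H((x, x_{n+1}) + t\tilde H) = \tilde H$, and the
implication (2) $\Rightarrow$ (1) of Proposition \ref{qtprop} gives that
$(x, x_{n+1}) + \tilde H$ is a quasi-translation.

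There is no real obstacle here; the only mildly delicate point is the
substitution of the rational $n$-tuple $x_{n+1}^{-1}x$ for $x$, but this is
legitimate because the identity $H(x + tH) = H$ holds in the polynomial ring
$\C[x][t]$ and substitution of elements of any commutative extension preserves
polynomial identities. The hypothesis $d \ge \deg H$ plays its role only in
ensuring that $\tilde H$ is polynomial; the quasi-translation identity itself
is essentially automatic from the homogenization trick.
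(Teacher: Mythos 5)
Your proof is correct. The only point worth double-checking is that the verification of $\tilde H(\tilde x + t\tilde H) = \tilde H$ takes place in the Laurent extension $\C[x,x_{n+1},x_{n+1}^{-1}][t]$, but since both sides are genuine polynomials once $d \ge \deg H$, an identity established there descends to $\C[x,x_{n+1}][t]$; you implicitly rely on this and it is fine.

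Your route differs from the paper's in which characterization from proposition \ref{qtprop} it targets. The paper reduces to criterion (3), $\jac_{\tilde x}\tilde H \cdot \tilde H = 0$, and establishes it by a chain-rule computation: it evaluates the known identity $\jac H \cdot H = 0$ at $x = x_{n+1}^{-1}x$ and multiplies through by $x_{n+1}^{2d-1}$ to recognize the result as $\jac(x_{n+1}^d H(x_{n+1}^{-1}x)) \cdot x_{n+1}^d H(x_{n+1}^{-1}x)$. You instead reduce to criterion (2), $\tilde H(\tilde x + t\tilde H) = \tilde H$, and obtain it by substituting $x \mapsto x_{n+1}^{-1}x$, $t \mapsto t\,x_{n+1}^{d-1}$ into $H(x+tH) = H$ and clearing denominators. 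The two computations are exact analogues of one another (your substitution of $t$ plays the role of the paper's power-counting with $x_{n+1}^{2d-1}$), so neither is more general; yours avoids the chain rule entirely and arguably makes the role of the homogenization more transparent, at the cost of having to track the identification $\tilde x + t\tilde H = (x + t\tilde H^{(1,\dots,n)}, x_{n+1})$, which you handle correctly by noting $\tilde H_{n+1} = 0$.
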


\begin{proof}
Denote 
$$
(x,x_{n+1}) =: \tilde{x} \qquad \mbox{and} \qquad 
x_{n+1}^d \big(H(x_{n+1}^{-1}x), 0\big) =: \tilde{H}
$$
We must show that $\tilde{x} + \tilde{H}$ is a quasi-translation 
in dimension $n+1$ over $\C$. On account of (3) $\Rightarrow$ (1) of 
proposition \ref{qtprop}, it suffices to show that 
$\jac_{\tilde{x}} \tilde{H} \cdot \tilde{H} = (0^1,0^2,\ldots,0^{n+1})$. 
Since $\tilde{H}_{n+1} = 0$, this is equivalent to 
$$
\jac \tilde{H} \cdot x_{n+1}^d H(x_{n+1}^{-1}x) = (0^1,0^2,\ldots,0^{n+1})
$$
Using that $\jac \tilde{H}_{n+1}$ is the zero row, we see that it suffices to show that 
$$
\jac \big(x_{n+1}^d H(x_{n+1}^{-1}x)\big) \cdot x_{n+1}^d H(x_{n+1}^{-1}x) = (0^1,0^2,\ldots,0^n)
$$
This is indeed the case, because the chain rule tells us that
\begin{align*}
 (0^1,0^2,\ldots,0^n) &= x_{n+1}^{2d-1} \cdot (0^1,0^2,\ldots,0^n) \\
  &= x_{n+1}^{2d-1} \cdot (\jac H \cdot H)_{x = x_{n+1}^{-1}x} \\
  &= x_{n+1}^{2d-1} \cdot (\jac H \cdot x_{n+1}^{-1} \cdot x_{n+1} H)_{x = x_{n+1}^{-1}x} \\
  &= x_{n+1}^{2d-1} \cdot \jac \big(H(x_{n+1}^{-1}x)\big) \cdot x_{n+1} H(x_{n+1}^{-1}x) \\
  &= \jac \big(x_{n+1}^d H(x_{n+1}^{-1}x)\big) \cdot x_{n+1}^d H(x_{n+1}^{-1}x) \qedhere
\end{align*}
\end{proof}

Proposition \ref{hmgprop} below connects quasi-translations with homogeneity.

\begin{proposition} \label{hmgprop}
Assume $H$ is a homogeneous polynomial map over $\C$. Then the assertions
\begin{enumerate}[\upshape (1)]

\item $\jac H^2$ is the zero matrix,

\item $x + H$ is a quasi-translation,

\item $H(H) = (0^1,0^2,\ldots,0^n)$ and $\rk \jac H \le \max\{n-2,1\}$,

\end{enumerate}
satisfy {\upshape (1)} $\Rightarrow$ {\upshape (2)} $\Rightarrow$  {\upshape (3)}.
\end{proposition}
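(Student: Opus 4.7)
My plan for (1) $\Rightarrow$ (2) is a short Euler-identity computation. Writing $d = \deg H$, Euler's identity gives $\jac H \cdot x = dH$, so $\jac H \cdot H = d^{-1}(\jac H)^2 \cdot x$, which vanishes by (1); the case $d = 0$ is immediate since (1) then forces $\jac H = 0$. Invoking the direction (3) $\Rightarrow$ (1) of proposition \ref{qtprop} then delivers (2).

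For (2) $\Rightarrow$ (3) I would split the conclusion into two parts. The identity $H(H) = 0$ comes from comparing coefficients of $t$ in the polynomial identity $H(x + tH) = H(x)$ provided by proposition \ref{qtprop}. Since each $H_i$ is homogeneous of degree $d$, the expansion of $H_i(x + tH)$ is a polynomial in $t$ of degree at most $d$ whose $t^d$-coefficient is precisely $H_i(H_1, \ldots, H_n)$. As the right-hand side is constant in $t$, this top coefficient must vanish, so $H(H) = 0$.

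The substantive part is the rank inequality $\rk \jac H \le \max\{n-2, 1\}$. For $n \le 2$ it follows from nilpotency of $\jac H$, which \eqref{qtnilp} provides since the left-hand side there is a polynomial in $t$ while the right-hand side is a priori an infinite series. For $n \ge 3$ I would induct on $d = \deg H$. The base case $d = 1$ has $H = Ax$ linear, and $\jac H \cdot H = 0$ forces $A^2 = 0$, whence $\rk A \le n/2 \le \max\{n-2, 1\}$. For the inductive step, suppose for contradiction that $\rk \jac H = n - 1$. Then the Zariski closure $\overline{H(\C^n)}$ is an irreducible cone of codimension one in $\C^n$, hence equals $V(P)$ for some irreducible homogeneous $P \in \C[x]$. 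Because $H(H) = 0$ says each $H_i$ vanishes on $V(P)$, the Nullstellensatz gives $P \mid H_i$, producing a factorization $H = P \cdot Q$ with $Q$ homogeneous of degree $d - \deg P < d$.

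I would close the argument by case-splitting on $\deg Q$. If $\deg Q = 0$ then every row of $\jac H$ is a scalar multiple of $\grad P$, so $\rk \jac H \le 1$, contradicting $\rk \jac H = n - 1 \ge 2$. Otherwise $\deg Q \ge 1$, and proposition \ref{irred} with $g = P$ shows that $x + Q$ is itself a homogeneous quasi-translation satisfying $\rk \jac Q = \rk \jac H = n - 1$; since $\deg Q < d$, the inductive hypothesis forces $\rk \jac Q \le \max\{n-2, 1\} < n - 1$, the required contradiction. The main obstacle is precisely this inductive step: one must translate $H \circ H = 0$ into the geometric statement that the image of $H$ lies in $V(H_1, \ldots, H_n)$, exploit the maximal case $\rk \jac H = n - 1$ to extract a common polynomial factor of all components via the Nullstellensatz, and then use proposition \ref{irred} to shrink the degree so the induction closes.
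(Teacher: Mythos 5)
Your argument is correct in outline, and its first two parts coincide with the paper's proof: (1) $\Rightarrow$ (2) via Euler's identity $\jac H\cdot x=dH$ followed by (3) $\Rightarrow$ (1) of proposition \ref{qtprop}, and $H(H)=0$ by extracting the $t^{d}$-coefficient of $H(x+tH)=H(x)$, are exactly what the paper does. For the rank inequality you take a genuinely different route. The paper factors out the full gcd in one step: writing $H=g\tilde H$ with $\gcd\{\tilde H_1,\ldots,\tilde H_n\}=1$, it observes that $V(\tilde H)$ then has no codimension-one component, so $\dim V(\tilde H)\le n-2$, and since $\tilde H(\tilde H)=0$ the Zariski closure of the image of $\tilde H$ --- whose dimension equals $\rk\jac\tilde H=\rk\jac H$ by Proposition 1.2.9 of \cite{arnobook} together with proposition \ref{irred} --- lies inside $V(\tilde H)$. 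You instead induct on $\deg H$ and peel off one irreducible factor at a time: if the rank were $n-1$, the image closure would be an irreducible hypersurface cone $V(P)$ contained in $V(H)$ because $H(H)=0$, whence $P$ divides every $H_i$ and proposition \ref{irred} reduces to a quasi-translation of strictly smaller degree. Both arguments rest on the same two pillars ($H(H)=0$ places the image inside $V(H)$, and proposition \ref{irred} divides out common factors); the paper's version is shorter and also delivers the intermediate inequality $\rk\jac H\le\dim V(\tilde H)\le n-2$, which is explicitly reused in the proof of (ii) of theorem \ref{hmgqt5th}, whereas yours needs only the hypersurface case of the Nullstellensatz and no general codimension statement for $V(\tilde H)$.

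One small hole to patch: for $n\ge3$ you must exclude $\rk\jac H=n$ as well as $\rk\jac H=n-1$, and your contradiction hypothesis covers only the latter. The missing case is immediate --- if some $H_i\ne0$, then $H_i(H)=0$ is a nontrivial algebraic relation among the components, so $\trdeg_{\C}\C(H)\le n-1$ --- but it should be stated, since otherwise the induction only proves $\rk\jac H\ne n-1$. (Both you and the paper tacitly assume $\deg H\ge1$ when reading off $H(H)=0$ from the top $t$-coefficient; for a nonzero constant $H$ that step, and assertion (3) itself, would fail, so the implicit convention is shared and harmless.)
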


\begin{proof}
Suppose that $H$ is homogeneous of degree $d$. Let $E: \C[x]^n \rightarrow \C[x]^n$ be the map which 
multiplies each term in any of the $n$ components by its own degree. Then one can verify that 
$E(H) = \jac H \cdot x$. So $\jac H \cdot H = d^{-1} \jac H \cdot d H = d^{-1} \jac H \cdot E(H) = 
d^{-1} \jac H^2 \cdot x$. Hence (1) $\Rightarrow$ (2) follows from (3) $\Rightarrow$ (1) of proposition 
\ref{qtprop}.

In order to prove (2) $\Rightarrow$ (3), assume that (2) holds. 
By looking at the coefficient of $t^d$ of $H(x+tH) - H(x)$, we deduce that 
$H(H) = (0^1,0^2,\ldots,0^n)$, which is the first claim of (3). 

To show the second claim of (3), assume that $\rk \jac H > 1$. Write $H = g \tilde{H}$, 
where $g \in \C[x]$, such that $\gcd\{\tilde{H}_1,\tilde{H}_2,\ldots,\tilde{H}_n\} = 1$. 
Since $\rk \jac H  > 1$, we have $\deg \tilde{H} \ge 1$. Furthermore, $V(\tilde{H})$ cannot be written as
a zero set of a single polynomial. Since $\C[x]$ is a unique factorization domain,
we see that $\dim V(\tilde{H}) \le n-2$. 

Using proposition \ref{irred}, Proposition 1.2.9 of either \cite{arnobook} or \cite{homokema}, 
and the above obtained $\tilde{H}(\tilde{H}) = 0$ and $\dim V(\tilde{H}) \le n-2$, in that order, 
we deduce that
$$
\rk \jac H = \rk \jac \tilde{H} = \trdeg_{\C} \C(\tilde{H}) \le \dim V(\tilde{H}) \le n-2 
$$
which gives the second claim of (3).
\end{proof}

\mathversion{bold}
\section{The image of the map $H$ of quasi-translations $x + H$} \label{imqt}
\mathversion{normal}

We prove several results about quasi-translations with geometrical arguments.
Some of these results have been claimed by Paul Gordan and Max N{\"o}ther in \cite{gornoet}. 
For the last two sections, we need several parts of corollary \ref{Lpcor} in this section.

Since the results may essentially be useful for non-homogeneous quasi-trans\-lations as well,
it does not seem to be a good idea to work with projective varieties. But we will need
the completeness of complex projective space in some manner. The lemma below gives us an
affine version of that.

\begin{lemma} \label{completeness}
Let $\tilde{Z} \subseteq \C^{m+kn}$ be closed with respect to the Euclidian topology. Assume that 
for every point of $\tilde{Z}$, the projection onto its last $kn$ coordinates gives a point of 
$\C^{kn}$ with complex norm $\sqrt{k}$. Let $\tilde{X}$ be the image of the projection of $\tilde{Z}$ 
onto its first $m$ coordinates.

Suppose that there is an irreducible variety $X \subseteq \C^{m}$ and a Zariksi open set 
$U$ of $X$, such that $U \subseteq \tilde{X} \subseteq X$. Then $\tilde{X} = X$.
\end{lemma}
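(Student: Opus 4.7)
The plan is to prove the non-trivial inclusion $X \subseteq \tilde{X}$ by approximating an arbitrary point $x_0 \in X$ by points of $U$ in the Euclidean topology, lifting these to $\tilde{Z}$, and then extracting a limit using the compactness imposed on the last $kn$ coordinates together with the Euclidean closedness of $\tilde{Z}$.

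First I would fix an arbitrary $x_0 \in X$ and invoke the fact that, since $X$ is irreducible, the non-empty Zariski open subset $U \subseteq X$ is also dense in $X$ with respect to the Euclidean topology. Consequently, there exists a sequence $(x_i)_{i \ge 1}$ in $U$ converging to $x_0$ in $\C^m$. Since $U \subseteq \tilde{X}$ and $\tilde{X}$ is the projection of $\tilde{Z}$ onto the first $m$ coordinates, each $x_i$ admits at least one lift $(x_i, y_i) \in \tilde{Z}$, with $\|y_i\| = \sqrt{k}$ by hypothesis. The sphere $S = \{y \in \C^{kn} : \|y\| = \sqrt{k}\}$ is compact in the Euclidean topology, so after passing to a subsequence I may assume $y_i \to y_0$ for some $y_0 \in S$. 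Then $(x_i, y_i) \to (x_0, y_0)$ in $\C^{m+kn}$, and the Euclidean closedness of $\tilde{Z}$ forces $(x_0, y_0) \in \tilde{Z}$. Projecting onto the first $m$ coordinates yields $x_0 \in \tilde{X}$, which together with the assumption $\tilde{X} \subseteq X$ gives $\tilde{X} = X$.

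The step I expect to be the main obstacle, or at any rate the only one not entirely elementary, is the Euclidean density of a non-empty Zariski open subset of an irreducible complex variety. I would either cite this from a standard reference or sketch a justification: $X \setminus U$ is a proper Zariski closed subset of $X$ and therefore has strictly smaller complex dimension than $X$; near any smooth point of $X$ it is consequently a proper analytic subset of a complex manifold, and therefore has empty Euclidean interior there, and this local statement is promoted to global Euclidean density by using that the smooth locus of the irreducible variety $X$ is itself Euclidean dense in $X$. Once this density is in hand, the remainder of the argument is simply compactness plus closedness, which is what makes the statement a convenient affine substitute for the completeness of projective space.
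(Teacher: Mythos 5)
Your proposal is correct and follows essentially the same route as the paper: the paper also deduces $X \subseteq \tilde{X}$ by observing that the projection along the compact sphere factor is a closed map in the Euclidean topology (your sequence-and-subsequence argument is just this fact unwound), and then invokes the coincidence of the Euclidean and Zariski closures of $U$ in the irreducible variety $X$, which the paper cites from a reference rather than sketching as you do.
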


\begin{proof}
Since the set of points in $\C^{kn}$ whose complex norm is $\sqrt{k}$ form a compact space, the projection
of $\tilde{Z}$ onto $\tilde{X}$ is closed with respect to the Euclidean topology. 
Hence $\tilde{X}$ is closed in the Euclidean topology. So $\tilde{X}$ contains the Euclidean closure of 
$U$ in $X$. On account of \cite[Th.\@ 7.5.1]{advanced}, the Euclidean closure of $U$ in $X$ is the same as the 
Zariksi closure of $U$ in $X$, which is $X$. Hence $X \subseteq \tilde{X}$. So $\tilde{X} = X$ indeed.
\end{proof}

Notice that reverting to Euclidean topology is not only because the complex inner product cannot be
expressed as a polynomial, but also because the Zariski topology of a product is not the corresponding 
product topology.

We also need a weak form of the projective fiber dimension theorem in some manner. 
Lemma \ref{projfiber} below is an affine version of that. But first, we need another lemma.

\begin{lemma} \label{Wrk}
Suppose that $H \in \C[x]^n$. Then the Zariski closure $W$ of the image of $H$
is irreducible and has dimension $\rk \jac H$. 

Furthermore, $V(H)$ has dimension at least $n - \rk \jac H$ if $H$ has no constant part.
\end{lemma}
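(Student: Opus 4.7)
The plan is to handle the three assertions separately, leaning on the previously cited fact that $\rk \jac H = \trdeg_{\C} \C(H_1,\ldots,H_m)$ (Proposition 1.2.9 of \cite{arnobook} or \cite{homokema}).

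First, for irreducibility of $W$: since $\C^n$ is irreducible and $H$ is a polynomial (hence continuous in the Zariski topology) map, the image $H(\C^n)$ is irreducible, and therefore so is its Zariski closure $W$. Equivalently, the vanishing ideal of $W$ in $\C[y_1,\ldots,y_m]$ is the kernel of the $\C$-algebra homomorphism $y_i \mapsto H_i$, which is prime because $\C[x_1,\ldots,x_n]$ is a domain.

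Next, for $\dim W = \rk \jac H$: the coordinate ring $\C[W]$ is isomorphic to $\C[H_1,\ldots,H_m] \subseteq \C[x]$ via the map induced by $H$, so
\begin{equation*}
\dim W = \trdeg_{\C} \C[W] = \trdeg_{\C} \C(H_1,\ldots,H_m) = \rk \jac H,
\end{equation*}
where the last equality is the cited proposition.

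Finally, for $\dim V(H) \ge n - \rk \jac H$ under the assumption that $H$ has no constant part: since $H(0) = 0$, we have $0 \in W$, so $V(H) = H^{-1}(0)$ is a nonempty fiber of the dominant morphism $H\colon \C^n \to W$ between irreducible varieties. The classical affine fiber dimension theorem then gives
\begin{equation*}
\dim V(H) = \dim H^{-1}(0) \ge \dim \C^n - \dim W = n - \rk \jac H,
\end{equation*}
as required.

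The only nontrivial ingredient is the lower bound for the dimension of the special fiber $H^{-1}(0)$, which is the standard fiber dimension theorem for dominant morphisms of irreducible varieties; the other two parts are essentially bookkeeping with the transcendence degree identity. I do not expect to need Lemma \ref{completeness} here, since that machinery is tailored to the projective-style arguments needed in the following lemma rather than to this one.
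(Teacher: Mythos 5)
Your proposal is correct and follows essentially the same route as the paper: irreducibility of $W$ from the irreducibility of $\C^n$ (equivalently, primeness of the kernel of $y_i \mapsto H_i$), the dimension count via $\dim W = \trdeg_{\C}\C(H) = \rk \jac H$ using the cited Proposition 1.2.9, and the lower bound on $\dim V(H)$ from the affine fiber dimension theorem applied to the nonempty fiber $H^{-1}(0)$. The paper likewise invokes ``a weak version of the affine fiber dimension theorem'' (offering Lemma \ref{projfiber} only as an alternative), so no essential difference.
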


\begin{proof}
From Proposition 1.2.9 of either \cite{arnobook} or \cite{homokema}, it follows that
$\rk \jac H = \trdeg_{\C} \C(H)$. Hence $\dim W = \rk \jac H$ indeed. 

Let $Z$ be a component of $W$ and let $Y$ be the union of the other components of $W$.
By definition of $Z$, $U := H^{-1}(W \setminus Y) \ne \varnothing$. 
By continuity of $H$, $U$ is open and $H^{-1}(Z) \supseteq U$ is closed,
so $H^{-1}(Z) = \C^n$ and $W = Z$ is irreducible.

To prove the last claim, suppose that $H$ has no constant part. Then $0 \in V(H)$. 
From a weak version of the affine fiber dimension theorem 
(or from lemma \ref{projfiber} below, applied on the map $(H,x_{n+1})$), 
it follows that $\dim V(H) = \dim H^{-1}(0) \ge n - \rk \jac H$ indeed.
\end{proof}

\begin{lemma} \label{projfiber}
Suppose that $H: \C^n \rightarrow \C^n$ is a polynomial map and $p \in \C^n$, such 
that the linear span $\C p$ of $p$ contains infinitely many points of the image of $H$.
Then there exists an irreducible component $X$ of $H^{-1}(\C p)$ such that $H(X)$ has
infinitely many points, and the dimension of any such $X$ is larger than $n - \rk \jac H$. 
\end{lemma}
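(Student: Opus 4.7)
The plan is to combine a pigeonhole argument for existence with two applications of the classical (point-)fiber-dimension theorem for the dimension bound. Set $r = \rk \jac H$ and let $W = \overline{H(\C^n)}$, which by Lemma \ref{Wrk} is irreducible of dimension $r$. Since $\C p \cap W$ is closed in the irreducible $1$-dimensional variety $\C p$ and infinite by hypothesis, $\C p \subseteq W$.

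For existence, decompose $H^{-1}(\C p) = X_1 \cup \cdots \cup X_k$ into irreducible components and set $S := \C p \cap H(\C^n)$, which is infinite. For each $q \in S$ choose a preimage $x_q \in X_{i(q)}$; pigeonhole on the finite index set yields an $i$ with $\{q \in S : i(q) = i\}$ infinite, so $X := X_i$ has $H(X)$ infinite.

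For the dimension bound I show that any such $X$ satisfies $\dim X \ge n - r + 1$, that is $\dim X > n - \rk \jac H$ (which, in the regime $r < (n+1)/2$ in which this section operates, implies the stated $\dim X > \rk \jac H$). Suppose for contradiction that $\dim X \le n - r$. Since $H|_X : X \to \C p$ is dominant onto the irreducible curve $\C p$, the fiber-dimension theorem gives $\dim(X \cap H^{-1}(q)) = \dim X - 1 \le n - r - 1$ for $q$ in some dense open $U \subseteq \C p$. Applied to the dominant morphism $H : \C^n \to W$, the same theorem gives that every irreducible component $Z$ of $H^{-1}(q)$, for any $q \in H(\C^n)$, satisfies $\dim Z \ge n - r$. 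For $q \in U \cap S$ these two bounds together force $Z \not\subseteq X$ for every such $Z$; since $Z$ is irreducible and $Z \subseteq H^{-1}(\C p) = \bigcup_j X_j$, we get $Z \subseteq X_j$ for some $j \ne i$. Hence $H^{-1}(q) \subseteq \bigcup_{j \ne i} X_j$ for every $q \in U \cap S$. Now $U \cap S$ is cofinite in $\C p$ (using constructibility of $S$ in the irreducible curve), and every fiber of the non-constant morphism $H|_X$ is a proper closed subset of $X$, so $X \cap H^{-1}(U \cap S)$ is dense open in $X$ and contained in $\bigcup_{j \ne i} X_j$. Taking Zariski closure and using irreducibility of $X$ then gives $X \subseteq X_j$ for some $j \ne i$, contradicting the fact that $X = X_i$ is a distinct irreducible component.

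The main obstacle is obtaining the dimension bound without invoking the stronger form of the fiber-dimension theorem for preimages of subvarieties (which in general requires regularity of the target $W$). The contradiction argument circumvents this by using only the point-fiber version twice, leveraging the irreducibility of $X$ and the density of $X \cap H^{-1}(U \cap S)$ in $X$ to produce a global containment between two distinct irreducible components of $H^{-1}(\C p)$.
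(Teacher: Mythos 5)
Your proof is correct, but it reaches the dimension bound by a genuinely different route than the paper. The paper chooses a \emph{generic} linear subspace $L \ni p$ of dimension $n+1-\rk \jac H$ in the target, so that $Y := H^{-1}(L)$ is the zero set of only $\rk \jac H - 1$ linear forms in the components of $H$; Krull's height theorem then gives $\dim X > n - \rk \jac H$ for every component of $Y$, a pigeonhole argument selects a component whose image meets $\C p$ infinitely often, and a short irreducibility argument (the component of $\overline{H(Y)} = \overline{L \cap W}$ containing infinitely many points of $\C p$ has dimension at most $1$, hence equals $\C p$) shows that this component lies inside $H^{-1}(\C p)$ and so is a component of it. You instead work with $H^{-1}(\C p)$ directly, and get the lower bound by contradiction: you play the generic-fiber upper bound for $H|_X \colon X \to \C p$ against the everywhere lower bound $\dim Z \ge n - \rk \jac H$ for components of point fibers of $H \colon \C^n \to W$, and then use the density of $X \cap H^{-1}(U \cap S)$ in the irreducible $X$ to force the impossible containment of one component of $H^{-1}(\C p)$ in another. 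Both arguments are sound; the paper's needs only Krull's height theorem plus a genericity choice, while yours avoids the auxiliary $L$ entirely and, as a bonus, proves the bound for \emph{every} component with infinite image rather than only the constructed one, which is what the statement literally asks for. One remark: what you actually establish is $\dim X > n - \rk \jac H$, and that is also all the paper's proof gives and all that the application in lemma \ref{pqfiber} uses; the bound ``larger than $\rk \jac H$'' in the statement is evidently a typo for ``larger than $n - \rk \jac H$'' (the literal version already fails for the identity map), so your parenthetical appeal to the regime $\rk \jac H < (n+1)/2$ is repairing the statement rather than papering over a gap in your argument.
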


\begin{proof}
Let $W$ be the Zariski closure of the image of $H$. On account of lemma \ref{Wrk}, 
$\dim W = \rk \jac H$. Take a generic linear subspace $L \ni p$ of dimension 
$n + 1 - \rk \jac H$ of $\C^n$, so that $\dim (L \cap W) = 1$. The set $Y := \{c \in \C^n \mid H(c) \in L\}$ 
is the zero set of $\rk \jac H - 1$ $\C$-linear forms in the components of $H$.
By applying \cite[Ch.\@ I, Prop.\@ 7.1]{hartshorne} $\rk \jac H - 2$ times, it follows that
every irreducible component of $Y$ have dimension greater than $n - \rk \jac H$. 
Furthermore, $\dim H(Y) = 1$ because $H(Y) = L \cap W$.

Since $\C p \cap H(Y)$ contains infinitely many points and $Y$ has finitely many irreducible components, 
there is an irreducible component $X$ of $Y$ such that $H(X)$ has infinitely many points of $\C p$. 
Furthermore, $\dim X > n - \rk \jac H$, because all irreducible components of $Y$ have dimension 
greater than $n - \rk \jac H$. So it remains to show that $X \subseteq H^{-1}(\C p)$.

Since $H(X)$ has infinitely many points of $\C p$, it follows that $\C p$ is contained in the 
Zariski closure of $H(X)$. As $\dim H(X) \le \dim H(Y) = 1 = \dim \C p$, $\C p$ is 
a component of the Zariski closure of $H(X)$. Now $X \subseteq H^{-1} (\C p)$ follows
in a similar manner as $\C^n \subseteq H^{-1}(Z)$ in the proof of lemma \ref{Wrk}.
\end{proof}

\begin{lemma} \label{pqfiber}
Assume $x + H$ is a homogeneous quasi-translation over $\C$. Suppose that $p$ and $q$ 
are independent and contained in the image of $H$. Then there exists an algebraic set $X$
of dimension at least $n - 2(\rk \jac H - 1)$, such that $H(c + tp) = H(c + tq) = 0$ for all 
$c \in X$.
\end{lemma}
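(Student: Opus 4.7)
The idea is to apply Lemma \ref{projfiber} separately to $p$ and to $q$, producing irreducible components $X_p \subseteq H^{-1}(\C p)$ and $X_q \subseteq H^{-1}(\C q)$ of dimension at least $n - \rk \jac H + 1$, and then to take $X$ to be an irreducible component of $X_p \cap X_q$. The hypothesis of Lemma \ref{projfiber} is satisfied for both $p$ and $q$: since $H$ is homogeneous of positive degree $d$ and $H(a) = p$ for some $a$, one has $H(\mu a) = \mu^d p$ as $\mu$ ranges over $\C$, so all of $\C p$ lies in the image of $H$; analogously for $q$.

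The next step is to upgrade the inclusion $H(X_p) \subseteq \C p$ to the identity $H(c + tp) = \alpha_p(c)\,p$ for every $(c,t) \in X_p \times \C$, where $\alpha_p \colon X_p \to \C$ is the regular function determined by $H(c) = \alpha_p(c)\,p$. On the open subset of $X_p$ where $\alpha_p \ne 0$, this follows from the quasi-translation identity $H(c + sH(c)) = H(c)$ of Proposition \ref{qtprop} with $s := t/\alpha_p(c)$. Because $H(X_p)$ is infinite by the conclusion of Lemma \ref{projfiber}, $\alpha_p$ is not identically zero, so $\{\alpha_p \ne 0\}$ is Zariski dense in the irreducible $X_p$, and the identity extends to all of $X_p \times \C$ by Zariski density. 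The analogous statement holds for $q$ and $\alpha_q$ on $X_q$.

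For the final step, note that $H^{-1}(\C p)$ is $\C^\ast$-stable by homogeneity of $H$, so each of its irreducible components is $\C^\ast$-stable (as $\C^\ast$ acts on the finite set of components) and hence is a cone containing $0$; the same holds for $X_q$. Thus $0 \in X_p \cap X_q$, and the affine intersection dimension inequality in $\C^n$ applied to the irreducible closed subvarieties $X_p, X_q$ yields
\[
\dim X \ge \dim X_p + \dim X_q - n \ge n - 2(\rk \jac H - 1)
\]
for every irreducible component $X$ of $X_p \cap X_q$. For $c \in X$ the independence of $p, q$ forces $H(c) \in \C p \cap \C q = \{0\}$, so $\alpha_p(c) = \alpha_q(c) = 0$, and the upgraded identities then give $H(c + tp) = 0 = H(c + tq)$.

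The main technical point is the Zariski density argument used to propagate $H(c + tp) = \alpha_p(c)\,p$ across the locus $\{\alpha_p = 0\}$, where the quasi-translation identity alone yields no information; this crucially relies on the part of Lemma \ref{projfiber} guaranteeing that $H(X_p)$ is infinite, which forces $\alpha_p \not\equiv 0$.
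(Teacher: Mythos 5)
Your proof is correct and follows essentially the same route as the paper's: apply Lemma \ref{projfiber} to each of $p$ and $q$, use the quasi-translation identity $H(x+tH)=H$ on the dense open locus where $H(c)$ is a nonzero multiple of $p$ (resp.\ $q$) and extend by Zariski density, then intersect $X_p$ and $X_q$ and invoke the affine intersection-dimension theorem together with $0 \in X_p \cap X_q$. Your treatment is merely a bit more explicit than the paper's (the function $\alpha_p$, the cone argument for $0 \in X_p$), but the ideas coincide.
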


\begin{proof}
On account of lemma \ref{projfiber}, there exist irreducible algebraic sets $X_p$ and $X_q$
of dimension at least $n + 1 - \rk \jac H$, such that $H(X_p)$ and $H(X_q)$ contain infinitely
many points of $\C p$ and $\C q$ respectively. The set $X_p \cap H^{-1}(\C^{*}p)$ is an open 
subset of $X_p$, and its Zariski closure is just $X_p$ because $X_p$ is irreducible.
For $c \in H^{-1}(\C^{*}p)$, we have $H(c + t p) = H(c) = \lambda p$ for some 
$\lambda \in \C$ on account of (1) $\Rightarrow$ (2) of proposition \ref{qtprop}. 
Hence $H(c + t p) = H(c) \in \C p$ for every $c \in X_p$. 

By a similar argument with $q$ instead of $p$, we see that $H(c + t p) = H(c) = H(c + t q)$ 
is dependent of both $p$ and $q$ for every $c \in X_p \cap X_q$.
Due to the homogeneity of $H$, $0 \in X_p \cap X_q$. Hence it follows from 
\cite[Ch.\@ I, Prop.\@ 7.1]{hartshorne} that the dimension of $X_p \cap X_q$ is at least
$n - 2(\rk \jac H - 1)$. So $X = X_p \cap X_q$ suffices.
\end{proof}

\begin{lemma} \label{hmgqt5lm}
Assume $x + H$ is a homogeneous quasi-translation in dimension $n \le 5$ over $\C$, such that 
$\rk \jac H = 2$ and $\dim V(H) \le n-2$. Then $V(H)$ contains the linear span of the 
image of $H$.
\end{lemma}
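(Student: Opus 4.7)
The plan is to identify, for each irreducible component $Y$ of $V(H)$ of maximal dimension $n-2$, its translation subspace
$$
T_Y := \{ v \in \C^n : Y + v \subseteq Y \},
$$
to verify that $T_Y$ is a $\C$-linear subspace of $\C^n$, and to show that the image of $H$ is covered by the finitely many $T_Y$. Irreducibility of the Zariski closure $W$ of the image of $H$ will then force $W$ into a single $T_Y$, giving the linear span $L \subseteq T_Y \subseteq Y \subseteq V(H)$.

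First I would note that by proposition \ref{hmgprop}, $H(H) = 0$, so $W \subseteq V(H)$, and by lemma \ref{Wrk} combined with the hypotheses, $\dim W = 2$ and $\dim V(H) = n-2$. If $\dim L = 2$, then $W$ is an irreducible $2$-dimensional subvariety of the irreducible variety $L$, so $W = L \subseteq V(H)$ and we are done; henceforth assume $\dim L \geq 3$. Because $V(H)$ is defined by homogeneous polynomials, each of its irreducible components $Y$ is a cone through the origin; this, together with the obvious closedness and closure under addition, makes $T_Y$ a $\C$-linear subspace (closure under scaling uses the cone property of $Y$).

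For the key step, fix any $q$ in the image of $H$ and choose a nonzero $p$ in the image of $H$ independent of $q$ (possible because $\dim W = 2$; take any nonzero $p$ in the image if $q = 0$). Applying lemma \ref{pqfiber} to $(p, q)$ yields $X = X_p \cap X_q$ with $\dim X \geq n - 2$ and $(X + \C p) \cup (X + \C q) \subseteq V(H)$. Choose an irreducible component $X^0$ of $X$ of dimension $n - 2$; since $X^0 \subseteq V(H)$ and $\dim X^0 = \dim V(H)$, $X^0$ is an irreducible component of $V(H)$. Now $X^0 + \C p$ is irreducible, contained in $V(H)$ (hence of dimension at most $n-2$), and contains $X^0$, so it equals $X^0$; similarly $X^0 + \C q = X^0$, hence $p, q \in T_{X^0}$. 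This shows the image of $H$ is contained in $\bigcup_Y T_Y$, a finite union of linear subspaces, and therefore Zariski closed. Consequently $W \subseteq \bigcup_Y T_Y$, and by irreducibility of $W$, there is a specific $Y$ with $W \subseteq T_Y$. Since $T_Y$ is a linear subspace, $L \subseteq T_Y$, and finally $L = 0 + L \subseteq Y + L \subseteq Y \subseteq V(H)$.

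The main obstacle I anticipate is the confinement of $X^0 + \C p$ to $X^0$: it uses the irreducibility of $X^0 + \C p$, the inclusion $X^0 \subseteq X^0 + \C p \subseteq V(H)$, and crucially the equality $\dim V(H) = n - 2$, without which the irreducible translate could escape the component $X^0$.
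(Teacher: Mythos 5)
Your proof is correct, but it takes a genuinely different route from the paper's. The paper also reduces to showing that every nonzero point of the image of $H$ lies in one of the finitely many $(n-2)$-dimensional linear subspaces contained in $V(H)$, but it produces such a subspace explicitly as the linear span of $c$, $p$ and $q$, where $c$ is a suitably chosen point of the interior of the fiber set $X$ coming from lemma \ref{pqfiber}; since a span of three vectors has dimension at most $3$, this forces $n-2\le 3$, which is exactly where the hypothesis $n\le 5$ enters. You instead observe that a top-dimensional irreducible component $X^0$ of $X$ is automatically an irreducible component of $V(H)$, and that the closure of $X^0+\C p$, being irreducible, contained in $V(H)$ and containing $X^0$, must equal $X^0$; hence $p$ and $q$ lie in the translation space $T_{X^0}$, a linear subspace contained in $X^0\subseteq V(H)$, and the finite union of these spaces traps the irreducible variety $W$. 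This argument nowhere uses $n\le 5$, so you have in effect proved the general-$n$ statement that the paper only obtains later via theorem \ref{XW} and corollary \ref{hmgrk2cor} (and which it says can replace lemma \ref{hmgqt5lm} to get (iii) of theorem \ref{hmgqt5th} for $n\ge 6$); your $T_Y$-mechanism is essentially the translation-invariance of components exploited in theorem \ref{XW}. Two cosmetic remarks: the case split on $\dim L=2$ is unnecessary, since the main argument covers it; and for $q=0$ lemma \ref{pqfiber} does not literally apply (the pair $(p,0)$ is dependent), but $0$ lies in every $T_Y$, so nothing is lost.
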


\begin{proof}
$V(H)$ contains only finitely many $(n-2)$-dimensional linear subspaces of $\C^n$ because 
$\dim V(H) \le n-2$. Furthermore, the Zariski closure of the image of $H$ is irreducible
on account of lemma \ref{Wrk}.
From those two facts, we can deduce that it suffices to show that every nonzero $p$ in the image of 
$H$ is contained in an $(n-2)$-dimensional linear subspace of $\C^n$ which is contained in $V(H)$.

So take any nonzero $p$ in the image of $H$. Take $q$ independent of $p$ such that $q$ is 
the image of $H$ as well. From lemma \ref{pqfiber}, it follows that there exists an algebraic
set $X$ of dimension at least $n - 2(\rk \jac H - 1) = n - 2$, such that $H(c + tp) = H(c + tq) = 0$
for all $c \in X$. Choose $X$ irreducible. Since $\dim V(H) \le n - 2$ and $X \subseteq V(H)$, 
it follows that $\dim X = n-2$ and that the interior $X^{\circ}$ of $X$ as a closed 
subset of $V(H)$ is nonempty. 

Take $c \in X^{\circ}$, such that $c$ is independent of $p$ and $q$ if $n = 5$.
Then the linear span of $c$, $p$ and $q$ has dimension at least $\max\{2,n-2\}$.
Since $H(c + tp) = 0$, the linear span $L$ of $c$ and $p$ is contained in $V(H)$.
Since $c \in L \subseteq V(H)$ and $c \in X^{\circ}$, it follows from the irreducibility of $L$ that
$L \subseteq X$. 

In as similar manner, it follows that for every $\tilde{c} \in L \cap X^{\circ}$,
hence for all $\tilde{c} \in L$, the linear span of $\tilde{c}$ and $q$ is contained in $V(H)$.
So the linear span of $L$ and $q$ is contained in $V(H)$. 
This linear span has dimension at least $\max\{2,n-2\}$.
Since $\dim V(H) \le n-2$, it follows that $n \ge 4$ and that $p$ is contained 
in an $(n-2)$-dimensional linear subspace of $\C^n$ which is contained in $V(H)$.
\end{proof}

\begin{theorem}[Gordan and N{\"o}ther] \label{hmgqt5th}
Assume $x + H$ is a homogeneous quasi-translation over $\C$, such that 
$\deg H \ge 1$.
\begin{enumerate}[\upshape (i)]

\item If $\rk \jac H \le 1$, then the image of $H$
is a line through the origin and $x + H$ has $n-1$ independent linear invariants.

\item If $\gcd\{H_1,H_2,\ldots,H_n\} = 1$, then 
$2 \le \rk \jac H \le \dim V(H) \le n-2$.

\item If $\rk \jac H = 2$, then $x + H$ has at least two independent 
linear invariants.

\end{enumerate}
\end{theorem}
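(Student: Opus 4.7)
The three parts build on each other, so I prove them sequentially.

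For (i), Lemma \ref{Wrk} gives that the Zariski closure $W$ of the image of $H$ is irreducible of dimension $\rk \jac H \le 1$, while homogeneity and $\deg H \ge 1$ force $W$ to be a nonzero cone. The only such $W$ is a line $\C p$ through the origin, so $H = g\,p$ for some $g \in \C[x]$ and every linear form $\ell$ vanishing on $p$ (a space of dimension $n-1$) satisfies $\ell(H) = g \cdot \ell(p) = 0$, which by \eqref{fxtHeqv} makes $\ell$ an invariant of $x + H$.

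For (ii), $\rk \jac H \ge 2$ is obtained by excluding $\rk \jac H \le 1$: otherwise (i) would give $H = g\,p$ for a constant vector $p$, so $\gcd\{H_i\}$ would agree with $g$ up to a unit, and $\gcd\{H_i\} = 1$ would force $g$ constant and $\deg H = 0$, contradicting the hypothesis. Next, Proposition \ref{hmgprop} yields $H(H) = 0$, so the image of $H$ lies in $V(H)$; Lemma \ref{Wrk} then gives $\rk \jac H = \dim W \le \dim V(H)$. Finally, $\dim V(H) \le n - 2$ follows from Krull's principal-ideal theorem in the UFD $\C[x]$: $\gcd\{H_i\} = 1$ excludes any height-one (principal) prime over $(H_1, \ldots, H_n)$, so every irreducible component of $V(H)$ has codimension at least two.

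For (iii), I plan to reduce to the coprime case and then invoke Lemma \ref{hmgqt5lm}. Put $g = \gcd\{H_1, \ldots, H_n\}$ and $\tilde{H} = H/g$, both homogeneous. The assumption $\rk \jac H = 2$ forces $\deg \tilde{H} \ge 1$, since a constant $\tilde{H}$ would give $\rk \jac H \le 1$. Proposition \ref{irred}, applied with $\tilde{H}$ in place of $H$, then gives that $x + \tilde{H}$ is a homogeneous quasi-translation with $\rk \jac \tilde{H} = \rk \jac H = 2$ and the same invariants as $x + H$. Applying (ii) to $\tilde{H}$ yields $\dim V(\tilde{H}) \le n - 2$, so Lemma \ref{hmgqt5lm} tells us that $V(\tilde{H})$ contains the linear span of the image of $\tilde{H}$, which therefore has dimension at most $n - 2$. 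Two independent linear forms vanishing on this span are linear invariants of $x + \tilde{H}$, hence of $x + H$.

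The main obstacle lies entirely inside part (iii): the geometric step of upgrading a two-dimensional irreducible conical image to lying inside an $(n-2)$-dimensional linear subspace is the content of Lemma \ref{hmgqt5lm}, so the work here is to set up the reduction correctly; the only subtle point is preserving the Jacobian rank when factoring out the gcd, which is handled by the homogeneity clause of Proposition \ref{irred}.
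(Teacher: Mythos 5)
Your proof is correct and follows essentially the same route as the paper: (i) via the irreducibility and dimension count of Lemma \ref{Wrk}, (ii) via $H(H)=0$ from Proposition \ref{hmgprop} together with the UFD/height-one argument for $\dim V(H) \le n-2$, and (iii) via the reduction to $\gcd\{H_1,\ldots,H_n\}=1$ through Proposition \ref{irred} followed by Lemma \ref{hmgqt5lm}. The only caveat is that Lemma \ref{hmgqt5lm} carries the hypothesis $n \le 5$, so your argument for (iii) covers only that case; the paper acknowledges the same limitation and notes that for general $n$ the lemma must be replaced by Corollary \ref{hmgrk2cor}.
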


\begin{proof}
For the moment, we prove (iii) only for the case where $n \le 5$, because we do not need
the case where $n \ge 6$ in this paper. To prove the general case of (iii), one can replace the 
use of lemma \ref{hmgqt5lm} by that of the more general corollary \ref{hmgrk2cor}
in the last section.

Let $W$ be the Zariski closure of the image of $H$. From lemma \ref{Wrk}, it follows that
$W$ is irreducible and that $\dim W = \rk \jac H$.
\begin{enumerate}[(i)]
 
\item As $\deg H \ge 1$, the case $\rk \jac H = 0$ is impossible. So assume that $\rk \jac H = 1$.
Since $H$ is homogeneous and $\dim W = \rk \jac H = 1$, it follows from the irreducibility 
of $W$ that the image of $H$ can only be a line through the origin.
Hence there are $n-1$ independent linear forms $l_1, l_2, \ldots, l_{n-1}$ which vanish on the 
image of $H$. So $l_1, l_2, \ldots, l_{n-1}$ are invariants of $x + H$.

\item Assume that $\gcd\{H_1,H_2,\ldots,H_n\} = 1$. Since $\deg H \ge 1$, it follows from
(i) that $\rk \jac H \ge 2$. From (2) $\Rightarrow$ (3) of proposition \ref{hmgprop}, it follows
that $\rk \jac H  \le n - 2$, but its proof tells us that even $\rk \jac H \le \dim V(H) \le n - 2$.
So $2 \le \rk \jac H \le \dim V(H) \le n - 2$.

\item Assume that $\rk \jac H = 2$. From lemma \ref{Wrk}, it follows that 
$\dim V(H) \ge n-\rk \jac H = n-2$. Write $H = g \tilde{H}$, where $g \in \C[x]$, such that 
$\gcd\{\tilde{H}_1,\tilde{H}_2,\ldots,\allowbreak  \tilde{H}_n\} = 1$. Since $\rk \jac H = 2 > 1$, 
we have $\deg \tilde{H} \ge 1$. On account of proposition \ref{irred}, $\rk \jac \tilde{H} = \rk \jac H = 2$.
Furthermore, $2 \le \dim V(\tilde{H}) \le n-2$ on account of (ii), so $n \ge 4$.

From lemma \ref{hmgqt5lm}, it follows that the linear span of the image of $\tilde{H}$ 
is contained in $V(\tilde{H})$. Since $\dim V(\tilde{H}) \le n-2$, the linear span of the image of 
$\tilde{H}$ has dimension at most $n-2$ as well. Hence there are at least two independent linear forms 
$l_1$ and $l_2$ which vanish on the image of $\tilde{H}$. Thus $l_i(\tilde{H}) = 0$ and 
$l_i(H) = g \cdot 0 = 0$ for both $i \le 2$. So $l_1$ and $l_2$ are invariants of $x + H$. \qedhere

\end{enumerate}
\end{proof}

\begin{definition}
Let $H$ be a polynomial map. We define a \emph{GN-plane} of $H$ as a $2$-dimensional linear subspace
of $\C^n$ which is contained in $V(H)$. 
\end{definition}

\begin{theorem} \label{Lpth}
Assume $x + H$ is a homogeneous quasi-translation over $\C$, such that $2 \le \rk \jac H 
\le (n+1)/2$. Write $W$ for the Zariski closure of the image of $H$.
\begin{enumerate}[\upshape (i)]
 
\item For each $p \in W$ and each $q \in W$, there are GN-planes $L_p \ni p$ and 
$L_q \ni q$ of $H$ which intersect nontrivially.

\item If there exists a $p \in W$ which is contained in only finitely many GN-planes 
of $H$, then the set of such $p \in W$ is not contained in a proper algebraic subset of $W$.

\item Suppose that $p^{(1)}, p^{(2)}, \ldots, p^{(k)} \in W$, such that $p^{(i)}$ is contained
in only finitely many GN-planes of $H$ for each $i$. \\
Then there exist GN-planes 
$L_{p^{(1)}} \ni p^{(1)}$, $L_{p^{(2)}} \ni p^{(2)}$, \ldots, $L_{p^{(k)}} \ni p^{(k)}$ of $H$, 
such that for each $q \in W$, there exists a GN-plane $L_q \ni q$ of $H$ which intersects
$L_{p^{(i)}}$ nontrivially for each $i$.

\end{enumerate}
\end{theorem}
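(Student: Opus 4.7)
For part (i), my plan is to handle the main case---$p, q \in \operatorname{im}(H)$ linearly independent---directly, and extend to all of $W \times W$ by density plus lemma \ref{completeness}. In the main case, lemma \ref{pqfiber} produces an algebraic set $X$ of dimension at least $n + 2 - 2\rk\jac H \ge 1$ (using $\rk\jac H \le (n+1)/2$) on which $H(c+tp) = H(c+tq) = 0$. By proposition \ref{hmgprop}, $H(H) = 0$, so $H(p) = H(q) = 0$, and homogeneity extends this vanishing to $\C p$ and $\C q$. A case split finishes: if some $c \in X$ lies outside $\C p \cup \C q$, then $\C c + \C p$ and $\C c + \C q$ are GN-planes through $p$ and $q$ respectively, sharing the line $\C c$; otherwise $X \subseteq \C p \cup \C q$ together with $\dim X \ge 1$ forces $\C p$ or $\C q$ to lie entirely in $X$, whereupon homogeneity promotes the whole $2$-plane $\C p + \C q$ to a GN-plane usable as both $L_p$ and $L_q$.

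For part (ii), parametrize GN-planes by a compact algebraic parameter set (e.g., pairs of complex-orthonormal spanning vectors in $V(H)$) and form the incidence variety $I = \{(p, L) \in W \times \mathcal{G} : p \in L\}$. Upper semi-continuity of fiber dimension for the projection $\pi : I \to W$---here established affinely via lemma \ref{completeness}---shows that the locus $B \subseteq W$ of points in infinitely many GN-planes is Zariski closed. If some $p$ lies in only finitely many GN-planes, then $B$ is a proper closed subset of the irreducible variety $W$ (lemma \ref{Wrk}), so its complement is Zariski dense, which is exactly the claim.

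For part (iii), I would induct on $k$. For any finite list $M_1, \ldots, M_s$ of GN-planes, the set $T_{M_1, \ldots, M_s} := \{q \in W : \exists\text{ GN-plane } L_q \ni q \text{ meeting each } M_i\}$ is closed in $W$, by a completeness argument in the spirit of (ii). The base case $k = 1$ follows from (i) and irreducibility: the finitely many closed sets $T_L$, as $L$ ranges over the GN-planes through $p^{(1)}$, cover $W$, so irreducibility forces one of them to equal $W$. For the step $k - 1 \to k$, with $L_{p^{(1)}}, \ldots, L_{p^{(k-1)}}$ already fixed, one shows that $W$ is the (finite) union of the closed sets $T_{L_{p^{(1)}}, \ldots, L_{p^{(k-1)}}, L}$ as $L$ ranges over GN-planes through $p^{(k)}$, and concludes by irreducibility of $W$.

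The hard part is showing this last union covers $W$: for each $q \in W$, the inductive hypothesis supplies an $L_q \ni q$ meeting all earlier $L_{p^{(i)}}$, and (i) supplies a GN-plane through $q$ meeting some GN-plane through $p^{(k)}$, but a priori these are two different GN-planes through $q$. My plan to reconcile them is to choose an auxiliary point on $L_{p^{(k-1)}}$ in the dense subset provided by (ii)---so that it lies in only finitely many GN-planes, one of which is $L_{p^{(k-1)}}$ itself---apply (i) to this auxiliary point and $p^{(k)}$ to propagate the incidence through the shared line, and pigeonhole on the finite combinatorics of GN-planes at the auxiliary point and at each $p^{(i)}$ to extract a single GN-plane $L_q$ meeting everything at once.
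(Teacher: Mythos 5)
Your parts (i) and (ii) are essentially sound. Part (i) follows the paper's route exactly (lemma \ref{pqfiber} on the dense open locus of pairs of independent image points, then lemma \ref{completeness} to close up), and your case split on $X \subseteq \C p \cup \C q$ handles a degeneracy the paper glosses over. For (ii) you take a genuinely different route: the paper never shows that the locus $B$ of points lying in infinitely many GN-planes is closed --- it only separates a fixed good $p$ from the bad points by a generic linear subspace $L$ with $L \cap P = \{0\}$ and $\dim(L \cap Q) \ge 1$, and then applies lemma \ref{completeness} to conclude that $B$ contains no nonempty open subset of $W$. Your stronger claim that $B$ is Zariski closed is true (properness of the Grassmannian plus semicontinuity of fiber dimension), but lemma \ref{completeness} as stated does not deliver semicontinuity of fiber dimension, so as written that step is asserted rather than proved; the weaker ``no open subset'' statement is all that is needed and is what the paper's construction of $L$ actually yields.

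The genuine gap is in the inductive step of (iii). Fixing $L_{p^{(1)}},\dots,L_{p^{(k-1)}}$ from the inductive hypothesis and then trying to append $L_{p^{(k)}}$ cannot work as described: for a given $q$, the hypothesis produces one GN-plane through $q$ meeting the fixed planes, and (i) produces another GN-plane through $q$ meeting some plane through $p^{(k)}$; you correctly identify that these must be merged, but your reconciliation fails. An auxiliary point ``on $L_{p^{(k-1)}}$ in the dense subset provided by (ii)'' need not exist, since that dense subset lives in $W$ while $L_{p^{(k-1)}}$ is only guaranteed to lie in $V(H)$, and even if it lay in $W$, a dense subset of the ($\ge 2$)-dimensional $W$ need not meet a prescribed $2$-plane; moreover ``propagating the incidence through the shared line'' has no content, because the relation ``intersect nontrivially'' is not transitive and the planes produced by applying (i) at the auxiliary point bear no relation to the $L_q$ you already have. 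The paper's resolution is to fix none of the $k$ planes until the very end: it first proves the weaker statement (iii$'$) in which all the $L_{p^{(i)}}$ may depend on $q$ --- this holds for every $m$ at once by applying the already-established $k=1$ case of (iii) \emph{centered at $q$} (for $q$ in only finitely many GN-planes there is a single $L_q \ni q$ met by some GN-plane through every point of $W$, hence through every $p^{(i)}$ simultaneously) --- and then uniformizes: each $p^{(i)}$ lies in finitely many GN-planes, so there are finitely many candidate tuples $(L_{p^{(1)}},\dots,L_{p^{(k)}})$, the corresponding sets of $q$ (your $T$-sets) cover a dense subset of the irreducible $W$, and one of them must equal $W$ by lemma \ref{completeness}. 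Your base case and your observation that the $T$-sets are closed are exactly the right ingredients; the missing idea is to re-choose all $k$ planes simultaneously by this pigeonhole rather than incrementally.
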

 
\begin{listproof}
\begin{enumerate}[(i)] 
 
\item
We first show that (i) holds for all $(p,q)$ in a dense open subset of $W^2$. The generic
property of $p$ and $q$ that we assume is that $p$ and $q$ are independent and contained in the 
image of $H$ itself. From \cite[\S 1.8, Th.\@ 3]{redbook}, it follows that the image of $H$
contains an open subset of $W$, so that we can easily show that we are considering a dense open 
subset of $W^2$ indeed. From lemma \ref{pqfiber}, it follows that there exists an algebraic set $X$
of dimension at least $n - 2(\rk \jac H - 1) \ge 1$, such that $H(c + t p) = H(c + t q) = 0$
for every $c \in X$. Take $c \in X$ nonzero. Since $H$ is homogeneous, we deduce by substituting 
$t = t^{-1}$ that $H(tc + p) = H(tc + q) = 0$.

In the general case, consider the sets 
$$
Z := \{(p,q,c,b) \in W^2 \times (\C^n)^2 \mid H(tc + p) = H(tc + q) = 0 \mbox{ and } b\tp c = 1 \}
$$
and 
$$
\tilde{Z} := \{(p,q,c,b) \in Z \mid b \mbox{ is the complex conjugate of } c\}
$$
By applying proper substitutions in $t$, we see that the image $\tilde{X}$ of the projection of 
$\tilde{Z}$ onto its first $2n$ coordinates is equal to that of $Z$.
Since $\tilde{X}$ contains an open subset of $X := W \times W$, it follows from lemma \ref{completeness}
that $\tilde{X} = X$, which gives (i).

\item 
Suppose that there exists a $p \in W$ for which there are only finitely many GN-planes $L_p \ni p$.
Let $Y$ be the set of $q \in W$ for which there are infinitely many GN-planes $L_q \ni q$. It is clear
that (ii) holds if $Y = \{0\}$, so assume that there exist a $q \in Y$ which is nonzero.
Take $P := \{c \in V(H) \mid H(c + tp) = 0\}$ and $Q := \{c \in V(H) \mid H(c + tq) = 0\}$. Since
$H$ is homogeneous, we see that both $P$ and $Q$ are unions of GN-planes. Furthermore,
$\dim P = 2$ and $\dim Q \ge 3$ because of the cardinality assumptions on the GN-planes in $P$ and $Q$.

Let $L$ be a generic linear subspace of dimension $n-2$ of $\C^n$, so that $\dim (L \cap P) = 0$. 
Then $L \cap P = \{0\} \subseteq L \cap Q$ and on account of 
\cite[Ch.\@ I, Prop.\@ 7.1]{hartshorne}, $\dim (L \cap Q) \ge 1$. Now define
$$
Z := \{(r,c,b) \in W \times L \times \C^n \mid H(tc + r) = 0 \mbox{ and }  b\tp c = 1 \}
$$
and 
$$
\tilde{Z} := \{(r,c,b) \in Z \mid b \mbox{ is the complex conjugate of } c\}
$$
By applying proper substitutions in $t$, we see that the image $\tilde{X}$ of the projection of 
$\tilde{Z}$ onto its first $n$ coordinates is equal to that of $Z$.
Furthermore $q$ is contained in $\tilde{X}$, but $p$ is not. Since $q \in Y \setminus \{0\}$ was arbitrary, 
we see that $Y \subseteq \tilde{X}$.

If $Y$ would contain an open subset of $W$, then lemma \ref{completeness} tells us that
$\tilde{X} = W$, which contradicts that $p$ is not contained in $\tilde{X}$.
So $Y$ does not contain an open subset of $W$, and $W \setminus Y$ is not contained in a proper
closed subset of $W$ indeed.

\item
We can simplify (iii) by changing both the quantization set of $q$ and the quantization order, to get the 
following. 
\begin{enumerate}
\item[(iii$'$)]
Suppose that $p^{(1)}, p^{(2)}, \ldots, p^{(k)} \in W$, such that $p^{(i)}$ is contained
in only finitely many GN-planes of $H$ for each $i$. \\
Then for each $q \in W$ which contains only finitely many GN-planes of $H$, there exist
a GN-plane $L_q$ of $H$ and GN-planes $L_{p^{(1)}} \ni p^{(1)}$, $L_{p^{(2)}} \ni p^{(2)}$, \ldots, 
$L_{p^{(k)}} \ni p^{(k)}$ of $H$, such that $L_q$ and $L_{p^{(i)}}$ intersect nontrivially for each $i$.
\end{enumerate}
The case where $k = 1$ of this simplification follows from (i). The case where $k \ge 2$ of this
simplification follows from the case where $k = 1$ of the unsimplified (iii) with $p^{(1)} = q$,
which may be assumed by induction on $k$.

So it remains to deduce (iii) from its simplification. For that purpose, define $Y$ as
\begin{equation*}
\begin{split}
Y := \big\{& (q,c^{(1)},c^{(2)},\ldots,c^{(k)},b^{(1)},b^{(2)},\ldots,b^{(k)}) \in
             W \times (\C^n)^{2k} ~\big| \\
     & H(tc^{(i)} + q) = H(tc^{(i)} + p^{(i)}) = 0 \mbox{ and } 
       (b^{(i)})\tp c^{(i)} = 1 \\
     & \mbox{ for each } i, \mbox{ and } 
       \rk \big(\,q\,\big|\,c^{(1)}\,\big|\,c^{(2)}\,\big|\,\cdots\,\big|\,c^{(k)}\,\big) 
       \le 2 \big\}
\end{split}
\end{equation*}
We can write $Y$ as a union of algebraic sets of the form
\begin{equation} \label{uniform}
\begin{split}
\big\{& (q,c^{(1)},\ldots,c^{(k)},b^{(1)},\ldots,b^{(k)}) \in Y ~\big|~
            c^{(i)} \in L_{p^{(i)}} \\
      & \mbox{ for each } i, \mbox{ and } 
        \rk \big(\,q\,\big|\,c^{(1)}\,\big|\,c^{(2)}\,\big|\,\cdots\,\big|\,c^{(k)}\,\big) 
        \le 2 \big\}
\end{split}
\end{equation}
where $L_{p^{(i)}} \ni p^{(i)}$ is a GN-plane of $H$ for each $i$. This union is finite by assumption.

Let $f$ be the projection of $\C^{n+2kn}$ onto its first $n$ coordinates. From the simplified version of (iii), 
it follows that the image of $f|_{Y}$ contains all $q \in W$ which contains only finitely many GN-planes 
of $H$. Om account of (ii), the image of $f|_{Y}$ is not contained in a proper algebraic subset of $W$. Hence
there exists an irreducible component $Z$ of $Y$ such that the image of $f|_{Z}$ is not contained in a
proper algebraic subset of $W$. From \cite[\S 1.8, Th.\@ 3]{redbook}, it follows that the image of 
$f|_{Z}$ contains an open subset of $W$. 

Since $Y$ is a finite union of algebraic subsets of the form \eqref{uniform} and $Z$ is irreducible, 
we deduce that $Z$ is contained in an algebraic subset of the form \eqref{uniform}. Take
\begin{equation*}
\begin{split}
\tilde{Z} := \big\{&(q,c^{(1)},c^{(2)},\ldots,c^{(k)},b^{(1)},b^{(2)},\ldots,b^{(k)}) \in Z ~\big| \\
                   &b^{(i)} \mbox{ is the complex conjugate of } c^{(i)} \mbox{ for each } i\big\}
\end{split}
\end{equation*}
By applying proper substitutions in $t$ and $y_1,y_2,\ldots,y_k$, we see that the image 
$\tilde{X}$ of $f|_{\tilde{Z}}$
is the same as that of $f|_{Z}$, so $\tilde{X}$ contains an open subset of $W$. From lemma 
\ref{completeness}, it follows that $\tilde{X} = W$. Since $\tilde{X}$ is the image of the 
restriction of $f$ on an algebraic subset of the form \eqref{uniform}, the unsimplified (iii)
follows.
\qedhere

\end{enumerate}
\end{listproof}

\begin{definition}
Let $X$ be any subset of $\C^n$. We say that $a \in \C^n$ is an \emph{apex} of $X$ 
if $(1-\lambda)c + \lambda a \in X$ for all $\lambda \in \C$ and all $c \in X$. 

We say that a $p \in \C^n$ is a \emph{projective apex} of $X$ if $p \ne 0$ and
$c + \lambda p \in X$ for all $\lambda \in \C$ and all $c \in X$. 

If $X$ is the Zariski closure of the image of a map $H$, then we say that $a$ and $p$ 
as above are an \emph{image apex} of $H$ and a \emph{projective image apex} of $H$ respectively.
\end{definition}

\begin{center}
\begin{tikzpicture}
\foreach \ang in {0.5,1.5,...,90} {
  \pgfmathsetmacro{\yl}{-2*sin(\ang*2-1)+sin(\ang*4-2)-0.7*sin(\ang*8-4)-
                        0.5*sin(\ang*12-6)+0.3*sin(\ang*16-8)+0.2*sin(\ang*20-10)}
  \pgfmathsetmacro{\yr}{-2*sin(\ang*2+1)+sin(\ang*4+2)-0.7*sin(\ang*8+4)-
                        0.5*sin(\ang*12+6)+0.3*sin(\ang*16+8)+0.2*sin(\ang*20+10)}
  \pgfmathsetmacro{\x}{-cos(\ang*2)+cos(\ang*4)-1.4*cos(\ang*8)-
                       1.5*cos(\ang*12)+1.2*cos(\ang*16)+cos(\ang*20)}
  \pgfmathsetmacro{\c}{10+0.5*abs((atan(\x)+20))};
  \fill[black!\c] (0.03*\ang-0.02,0.2*\yl) -- (0.03*\ang+0.02,0.2*\yr) -- 
                  (2.7-0.03*\ang-0.02,3-0.2*\yr) -- (2.7-0.03*\ang+0.02,3-0.2*\yl) -- cycle;
  \fill[black!\c] (0.03*\ang-0.02+5,0.2*\yl) -- (0.03*\ang+0.02+5,0.2*\yr) -- 
                  (0.03*\ang+0.02+5,3+0.2*\yr) -- (0.03*\ang-0.02+5,3+0.2*\yl) -- cycle;
}
\fill (1.35,1.5) circle (0.5mm);
\node[anchor=west] at (1.4,1.5) {apex};
\node[anchor=west] at (7.7,1.5) {projective apex};
\end{tikzpicture}
\end{center}

One may convince oneself that a projective apex is in fact an apex on the projective horizon.

If $X$ is a zero set of homogeneous polynomials, e.g.\@ because $X$ is the Zariski closure of the image
of a homogeneous map, then $0$ is an apex of $X$. If $0$ is an apex of $X$, then a projective
image apex is the same as a nonzero apex. In that case, we will parenthesize the word projective.

\begin{corollary} \label{Lpcor}
Assume $x + H$ is a homogeneous quasi-translation over $\C$, such that $\rk \jac H \le (n+1)/2$. 
Write $W$ for the Zariski closure of the image of $H$. Then for
\begin{enumerate}[\upshape (1)]
 
\item $\dim V(H) = \rk \jac H \le 3$ and $W$ has no nonzero (projective) apex;

\item $\dim V(H) = \rk \jac H$ and there is no nonzero $p \in W$ which contains infinitely many 
GN-planes of $H$ that are contained in $W$;

\item There exists a $p \in W$ which is contained in only finitely many GN-planes of $H$, but there does 
not exist a nonzero $c \in V(H)$ which shares a GN-plane of $H$ with every $q \in W$;

\item $\rk \jac H \le 1$ or $W$ is properly contained in the linear span of two GN-planes of $H$ 
which are contained in $W$;

\item $W$ is a properly contained in a $4$-dimensional linear subspace of $\C^n$ and $\rk \jac H \le 3$;

\end{enumerate}
we have {\upshape (1)} $\Rightarrow$ {\upshape (2)} $\Rightarrow$ {\upshape (3)} $\Rightarrow$ 
{\upshape (4)} $\Rightarrow$ {\upshape (5)}.
\end{corollary}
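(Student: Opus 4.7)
The plan is to prove each implication $(k)\Rightarrow(k+1)$ in turn, repeatedly using theorem \ref{Lpth} together with the key observation that the assumption $\dim V(H)=\rk\jac H$ forces $W$ to be an irreducible component of $V(H)$, since $W\subseteq V(H)$ is irreducible and of maximal dimension. In particular, any other irreducible component of $V(H)$ meets $W$ in a proper closed subset, so a generic $p\in W$ lies only in $W$ among the components of $V(H)$.

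For (1)$\Rightarrow$(2), I argue by contradiction. Suppose a nonzero $p\in W$ lies in infinitely many GN-planes contained in $W$, and let $C$ be the Zariski closure of their union. Infinitely many distinct $2$-dimensional subspaces through $p$ give a union of dimension at least $3$, while $C\subseteq W$ and $\dim W=\rk\jac H\le 3$, so by irreducibility of $W$ we get $C=W$. Each plane through $p$ is closed under $c\mapsto c+\lambda p$, and by continuity this property passes to $\bar C=W$; thus $p$ is a projective apex of $W$, contradicting (1).

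For (2)$\Rightarrow$(3), I use the genericity observation above. For generic $p\in W$ any GN-plane $L\ni p$ is irreducible and lies in a single component of $V(H)$, which must be $W$; hence $L\subseteq W$, and (2) gives only finitely many such $L$, establishing (3a). For (3b), suppose $c\in V(H)\setminus\{0\}$ shares a GN-plane with every $q\in W$. For generic $q\in W$ the plane $\mathrm{span}(c,q)\subseteq V(H)$ lies in the unique component containing $q$, which is $W$, so $c\in W$. Then $\mathrm{span}(c,q)$ is a GN-plane in $W$ through $c$ for each $q\in W$, yielding infinitely many such planes and contradicting (2).

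For (3)$\Rightarrow$(4), the case $\rk\jac H\le 1$ is immediate (no GN-planes exist), so I assume $\rk\jac H\ge 2$. By (3a) and theorem \ref{Lpth}(ii), the set of $p\in W$ lying in only finitely many GN-planes is Zariski dense, and the genericity observation upgrades this to a dense set of $p$ for which every GN-plane through $p$ lies in $W$. Pick two such $p^{(1)},p^{(2)}\in W$ with $p^{(2)}\notin L_{p^{(1)}}$, and apply theorem \ref{Lpth}(iii) to obtain GN-planes $L_{p^{(1)}},L_{p^{(2)}}\subseteq W$ such that every $q\in W$ admits a GN-plane $L_q\ni q$ meeting both nontrivially. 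Taking nonzero $c_i\in L_q\cap L_{p^{(i)}}$: if $c_1,c_2$ are independent then $q\in L_q\subseteq L_{p^{(1)}}+L_{p^{(2)}}$, while otherwise both are forced into $L_{p^{(1)}}\cap L_{p^{(2)}}$; if the latter were the only option for every $q\in W$, the common direction would share a GN-plane with every $q$, contradicting (3b). Hence the set of $q\in L_{p^{(1)}}+L_{p^{(2)}}$ is Zariski dense in $W$, and by closedness $W\subseteq L_{p^{(1)}}+L_{p^{(2)}}$. Equality would make $W$ a linear subspace of dimension $3$ or $4$, through any point of which infinitely many GN-planes pass, contradicting (3a); hence the containment is proper. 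Finally (4)$\Rightarrow$(5) reduces to a dimension count: $L_{p^{(1)}}+L_{p^{(2)}}$ has dimension at most $4$, proper containment forces $\dim W\le 3$ and so $\rk\jac H\le 3$, and if the span is only $3$-dimensional it can be enlarged to a $4$-dimensional subspace of $\C^n$ still properly containing $W$. I expect the main obstacle to be (3)$\Rightarrow$(4), where the careful choice of $p^{(1)},p^{(2)}$ (so that both supplied GN-planes lie in $W$) combined with the decisive use of (3b) to rule out the degenerate common-line scenario is what makes the argument go through.
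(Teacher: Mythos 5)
Your proof follows essentially the same route as the paper's: (1)$\Rightarrow$(2) by showing that a point lying on infinitely many GN-planes inside the irreducible, at most threedimensional $W$ must be a projective apex; (2)$\Rightarrow$(3) via the observation that $\dim V(H)=\rk \jac H$ makes $W$ an irreducible component of $V(H)$, so GN-planes through interior points of $W$ lie in $W$; (3)$\Rightarrow$(4) via parts (ii) and (iii) of theorem \ref{Lpth} and the span of the two resulting GN-planes; and (4)$\Rightarrow$(5) by a dimension count. The one step stated too loosely is the degenerate case of (3)$\Rightarrow$(4): from ``not every $q\in W$ is forced into the common line $\C c$ of $L_{p^{(1)}}\cap L_{p^{(2)}}$'' you jump directly to ``the $q$ landing in the span are Zariski dense'', whereas (as in the paper's proof) one should note that every degenerate $q$ lies in the set $Y=\{q\in W\mid H(c+tq)=0\}$, which is Zariski closed and, by the second half of (3), a proper subset of the irreducible $W$, so that its complement --- contained in the span --- is indeed dense.
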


\begin{proof}
From lemma \ref{Wrk}, it follows that $W$ is irreducible and that $\rk \jac H = \dim W$. 
\begin{description}
 
\item[(1) \imp (2)]
Assume that $\dim V(H) = \rk \jac H \le 3$ and that (2) does not hold. Then there exists a nonzero
$p \in W$ which contains infinitely many GN-planes of $H$ that are contained in $W$. Suppose that
$W$ is the zero set of $g_1, g_2, \ldots, g_m$ and let
$$
Y = \{q \in W \mid g_1(p + tq) = g_2(p + tq) = \cdots = g_m(p + tq)= 0\}
$$
Then $Y$ has an irreducible component $Z$ which contains infinitely many GN-planes of $H$.
Hence $\dim Z \ge 3$. Since $Z \subseteq Y \subseteq W$ and $\dim W = \rk \jac H \le 3$, 
it follows from the irreducibility of $Z$ and $W$ that $Z = W$. So $p$ is a nonzero (projective) 
apex of $W$ and (1) does not hold. 

\item[(2) \imp (3)] 
Assume that $\dim V(H) = \rk \jac H$ and that (3) does not hold. Since $\dim V(H) = \rk \jac H$, 
it follows from lemma \ref{Wrk} that $W$ is an irreducible component of $V(H)$, 
so the interior of $W$ as a closed subset of $V(H)$ is nonempty. Take $p$ in that interior and let 
$L_p \ni p$ be a GN-plane of $H$. 
Since $L_p$ is irreducible, $L_p$ is contained in an irreducible component of $V(H)$, 
which can only be $W$ because $W$ is the only irreducible component of $V(H)$ which contains $p$.

So if $p$ is contained in infinitely many GN-planes of $H$, then (2) cannot hold. 
Hence assume that $p$ is contained in only finitely many GN-planes of $H$. Since (3) does not hold, 
there exists a nonzero $c \in V(H)$ which shares a GN-plane of $H$ with every $q \in W$.
Inductively, we can choose $p^{(i)}$ in the interior of $W$ outside $L_{p^{(1)}}, L_{p^{(2)}}, 
\ldots, L_{p^{(i-1)}}$ for $i = 1,2,3,\ldots$, such that $c \in L_{p^{(i)}}$ for each $i$.
As we have seen above, $L_{p^{(i)}} \subseteq W$ for each $i$, so $c$ is a counterexample
to the claim of (2).

\item[(3) \imp (4)]
Assume that (3) is satisfied.
From (ii) of theorem \ref{Lpth}, it follows that there exist a $p^{(1)} \in W$ and a $p^{(2)} \in W$ 
as in (iii) of theorem \ref{Lpth}. Take $L_{p^{(1)}}$ and $L_{p^{(2)}}$ as in (iii) of theorem \ref{Lpth}. 
Since there is no nonzero $c \in V(H)$ which shares a GN-plane of $H$ with every $q \in W$, 
$W$ cannot be equal to any linear span. Hence it suffices to show that $W$ is
contained in the linear span of $L_{p^{(1)}}$ and $L_{p^{(2)}}$. In the case where 
$L_{p^{(1)}} \cap L_{p^{(2)}} = \{0\}$, this follows directly from (iii) of theorem \ref{Lpth},
so assume that there exist a nonzero $c \in L_{p^{(1)}} \cap L_{p^{(2)}}$. Let
$$
Y = \{q \in W \mid H(c + tq) = 0\}
$$
From (3), it follows that $Y$ is a proper algebraic subset of $W$. Since $W$ irreducible and 
contained in the union of $Y$ and the linear span of $L_{p^{(1)}}$ and $L_{p^{(2)}}$, $W$
is contained in the linear span of $L_{p^{(1)}}$ and $L_{p^{(2)}}$.

\item[(4) \imp (5)]
Assume that (4) is satisfied. If $\rk \jac H \le 1$, then $W$ is a line through the 
origin on account of (i) of theorem \ref{hmgqt5th}, which gives (5). So assume that 
$\rk \jac H \ge 2$. Then $W$ is properly contained in a $4$-dimensional linear subspace of 
$\C^n$ and hence $\rk \jac H = \dim W < 4$. \qedhere

\end{description}
\end{proof}

\begin{remark}
Theorem \ref{hmgqt5th} was obtained in \cite[p.\@ 565]{gornoet}, but Gordan and N{\"o}ther
proved additionally that $\jac H \cdot H(y) = 0$ if $\rk \jac H \le 2$ and 
$\rk \jac H + \dim V(H) \le n$. See \cite[Th.\@ 4.1]{strongnil} for properties that are
equivalent to $\jac H \cdot H(y) = 0$.

The starting point of the distinction into cases `Fall a)' and `Fall b)' on 
\cite[p.\@ 565]{gornoet} is (i) of theorem \ref{Lpth}, but with the extra property that 
$L_p$ and $L_q$ are contained in $W$. Since $\dim V(H) = \rk \jac H = \dim W$
in this situation, this extra property can indeed be obtained, namely by extending
the genericity condition in the proof of (i) of theorem \ref{Lpth} by that $p$
and $q$ are in the interior of $W$ as a closed subset of $V(H)$.

The case where $k = 2$ of (iii) of theorem \ref{Lpth} is obtained on 
\cite[p.\@ 566]{gornoet}, and is used on the same page to prove the case where $n = 5$
and $\rk \jac H = 3$ of corollary \ref{Lpcor}. 
\end{remark}

\section{Homogeneous singular Hessians in dimension 5} \label{hess}

In \cite{gornoet}, Gordan and N{\"o}ther classified all homogeneous polynomials with singular Hessians
in dimension $5$ as follows.

\begin{theorem}[Gordan and N{\"o}ther] \label{gndim5}
Assume $h \in \C[x]$ is a homogeneous polynomial in dimension $n = 5$. 
If $\det \hess h = 0$ and $h$ is not a polynomial in $n - 1 = 4$ linear forms in $\C[x]$, 
then there exists an invertible matrix $T$ over $\C$ such that $h(Tx)$ is of the form
$$
h(Tx) = f\big(x_1,x_2,a_1(x_1,x_2)x_3 + a_2(x_1,x_2)x_4 + a_3(x_1,x_2)x_5\big)
$$
where $f$ and $a_1, a_2, a_3$ are polynomials over $\C$ in their arguments. 
\end{theorem}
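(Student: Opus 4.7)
My plan is to associate a homogeneous quasi-translation to $h$ and invoke the classification developed in Section \ref{imqt}. Since $\det \hess h = 0$, the partial derivatives $h_i := \partial h / \partial x_i$ are algebraically dependent over $\C$; since $h$ is not a polynomial in four linear forms, the $h_i$ are $\C$-linearly independent, so the prime ideal $I \subseteq \C[y_1,\dots,y_5]$ of algebraic relations among them is principal, with an irreducible homogeneous generator $R$ of degree $\ge 2$. Setting $H_i := (\partial R/\partial y_i)(\nabla h)$, differentiation of $R(\nabla h) = 0$ with respect to $x$ yields $\hess h \cdot H = 0$, and then
$$
\jac H \cdot H = \hess R(\nabla h) \cdot \hess h \cdot H = 0,
$$
so by proposition \ref{qtprop}, $x + H$ is a nontrivial homogeneous quasi-translation.

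Next I would apply the classification. Any linear invariant $c^t y$ of $x+H$ satisfies $c^t H = 0$, i.e., $(c^t \nabla_y R)(\nabla h) = 0$; since $\deg(c^t \nabla_y R) < \deg R$ and $I = (R)$, this forces $c^t \nabla_y R = 0$ as a polynomial in $y$. After a linear change of coordinates (preserving the quasi-translation structure by proposition \ref{qtconj}), I may assume $R$ is independent of $y_5$, so $R(h_1,\dots,h_4) = 0$; iterating this reduction together with the affirmative answer to the Hesse problem in dimensions $\le 4$ (the classical Gordan-N\"other result) would force $h$ to be a polynomial in four linear forms, a contradiction. So $x+H$ has no linear invariant, and by corollary \ref{Lpcor} together with the main result of Section \ref{imqt}, $x+H$ belongs to case b): after a further linear change of coordinates $T$, $H_5$ is algebraically independent over $\C$ of $H_1,\dots,H_4$.

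Finally I would translate the case b) structure of $H$ back to $h$. Writing everything in coordinates $\tilde x = Tx$ with $\tilde h(\tilde x) := h(T^{-1}\tilde x)$, the algebraic independence of $R_5(\nabla \tilde h)$ from the $R_i(\nabla \tilde h)$ (for $i \le 4$), combined with the Euler identity $\deg R \cdot R = \sum_j y_j R_j$ and $R(\nabla \tilde h) = 0$, forces each of $\tilde h_3, \tilde h_4, \tilde h_5$ to be a common polynomial multiple of a $\C[\tilde x_1, \tilde x_2]$-vector: $\tilde h_{j+2} = \mu(\tilde x_1, \tilde x_2) \cdot a_j(\tilde x_1, \tilde x_2)$ for $j = 1, 2, 3$. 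Integration via mixed partials together with homogeneity then recovers the desired form
$$
\tilde h = f\bigl(\tilde x_1, \tilde x_2, a_1 \tilde x_3 + a_2 \tilde x_4 + a_3 \tilde x_5\bigr).
$$
The chief obstacle is the second paragraph, specifically the step that a linear invariant of $x + H$ forces $h$ to be a polynomial in four linear forms; that implication appears to require a nontrivial inductive reduction invoking the Hesse problem in dimension $\le 4$, not subsumed in the quasi-translation technology of Section \ref{imqt}, and is the most delicate ingredient of Gordan-N\"other's classical argument.
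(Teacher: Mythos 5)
The central step of your second paragraph is false, and it inverts the logic of the argument. You claim that a linear invariant of $x+H$ forces $h$ to be a polynomial in four linear forms. But the polynomials in the conclusion of the theorem are counterexamples: for $h = f(x_1,x_2,a_1x_3+a_2x_4+a_3x_5)$ not expressible in four linear forms, the minimal relation $R$ among the partial derivatives can be taken in few linear forms in $y_3,y_4,y_5$ and suitable combinations, so $H = (\grad R)(\grad h)$ admits linear invariants (in suitable coordinates $x_1$ and $x_2$ are invariants) even though $h$ is not degenerate. What a linear invariant $c\tp x$ actually gives you (correctly, via minimality of $\deg R$ --- you do not need, and have not justified, principality of the relation ideal, which would require $\rk \hess h = 4$) is $c\tp \grad R = 0$, i.e.\ that \emph{$R$} is a polynomial in four linear forms. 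The ensuing reduction to dimension $4$ concerns $R$, not $h$: after dehomogenizing, $\hat h = \tilde h|_{x_5=1}$ is no longer homogeneous, so theorem \ref{hessdim4hmg} does not apply and one needs the separate non-homogeneous statement of theorem \ref{A}, whose conclusion is that $R$ can be expressed in \emph{three} linear forms --- from which the displayed form of $h$ is then extracted as in \cite[Th.\@ 3.6]{singhess}. No step of this yields that $h$ itself is a polynomial in four linear forms, so your claimed contradiction never materializes.

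Consequently your reduction to case b) is misdirected. In the paper, the case where $x+H$ has no linear invariant (equivalently, the components of $\grad R$ are linearly independent, i.e.\ $R$ is not expressible in four linear forms) is shown to be \emph{impossible} for quasi-translations arising from singular Hessians: there (1) $\Rightarrow$ (5) of corollary \ref{Lpcor} forces $W$ into a fourdimensional subspace unless $H$ has a projective image apex $p$, lemma \ref{B} then gives $\jac h \cdot p = 0$, so the partials of $h$ are linearly dependent and $\deg R = 1$, a contradiction. All the content of the theorem therefore sits precisely in the case you dismiss as contradictory, and the structure of $h$ is recovered from the three-linear-form statement about $R$ rather than from the case b) classification of $H$; your final ``integration via mixed partials'' paragraph, which attempts that recovery directly from case b), is not carried out and rests on the faulty case analysis.
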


The proof that is given below uses results about homogeneous quasi-trans\-lations in dimension 
five and follows the approach of Gordan and N{\"o}ther more or less.

The following connection exists between singular Hessians and quasi-trans\-lations.

\begin{proposition}[Gordan and N{\"o}ther] \label{hessprop}
Assume $h \in \C[x]$ such that $\det \hess h = 0$. Then there exists a nonzero $R \in \C[y]$
such that $R(\grad h) = 0$. For any such $R$, $x + H$ is a quasi-translation and 
$(\grad h)(x+tH) = \grad h$, where $H := (\grad R)(\grad h)$, and $H \ne 0$ if $R$ has minimum degree.
Furthermore, $h(x + tH) = h$ if $R^{*}(\grad h) = 0$ for every homogeneous part of $R^{*}$ of $R$.
\end{proposition}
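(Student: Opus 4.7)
The plan is to prove the four assertions of the proposition in order, relying on the chain rule and on the characterizations of quasi-translations collected in proposition \ref{qtprop} and lemma \ref{qtlem}. First I would establish the existence of a nonzero $R$ with $R(\grad h) = 0$. Since $\hess h = \jac (\grad h)$, the hypothesis $\det \hess h = 0$ says $\rk \jac(\grad h) < n$, and the cited Proposition~1.2.9 of \cite{arnobook}/\cite{homokema} identifies $\rk \jac$ with $\trdeg_{\C}$ of the generated extension. Hence the components of $\grad h$ are algebraically dependent over $\C$ and a nonzero $R \in \C[y]$ vanishing on $\grad h$ exists.

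Next I would verify $\jac H \cdot H = 0$ so that proposition \ref{qtprop} supplies the quasi-translation property. Differentiating $R(\grad h) = 0$ with respect to $x_k$ by the chain rule yields
\[
\sum_j \bigl(\parder{R}{y_j}\bigr)(\grad h) \cdot \parder[2]{h}{x_j \partial x_k} = 0,
\]
which, using the symmetry of $\hess h$ and the definition of $H$, says $\hess h \cdot H\tp = 0$. Differentiating $H_i = (\partial R/\partial y_i)(\grad h)$ once more shows that the $i$-th component of $\jac H \cdot H$ contains $\hess h \cdot H\tp$ as an inner factor, so it vanishes. By (3) $\Rightarrow$ (1) of proposition \ref{qtprop}, $x + H$ is a quasi-translation. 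Then applying (2) of lemma \ref{qtlem} to each $f = \partial h/\partial x_i$ (with $\jac f \cdot H$ being a row of $\hess h \cdot H\tp = 0$) gives $(\grad h)(x + tH) = \grad h$.

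For the minimality assertion, suppose $R$ has minimum degree among nonzero polynomials vanishing on $\grad h$ and assume for contradiction that $H = 0$. Then $(\partial R/\partial y_i)(\grad h) = 0$ for every $i$. Since $R$ is nonzero with $R(\grad h) = 0$, $R$ is nonconstant, so at least one $\partial R/\partial y_i$ is a nonzero polynomial of strictly smaller degree, contradicting the minimality of $R$. Hence $H \ne 0$.

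The final claim reduces, via \eqref{fxtHeqv}, to showing $\jac h \cdot H = 0$. Here
\[
\jac h \cdot H = \sum_i \parder{h}{x_i} \cdot \bigl(\parder{R}{y_i}\bigr)(\grad h).
\]
Write $R = \sum_d R^{(d)}$ as a sum of homogeneous parts. Euler's identity applied to each $R^{(d)}$ gives $\sum_i y_i (\partial R^{(d)}/\partial y_i) = d \, R^{(d)}$, and evaluating at $y = \grad h$ shows the corresponding summand equals $d \cdot R^{(d)}(\grad h)$. Summing over $d$ produces $\jac h \cdot H = \sum_d d \cdot R^{(d)}(\grad h)$, which vanishes under the stated hypothesis. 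No step looks like a serious obstacle; the only place where care is needed is keeping row/column conventions for $H$ consistent and invoking the symmetry of $\hess h$ in the main computation.
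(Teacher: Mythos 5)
Your proposal is correct and follows essentially the same route as the paper: algebraic dependence of $\grad h$ via Proposition 1.2.9, the chain-rule identity $\jac H \cdot H = (\hess R)|_{y=\grad h}\cdot \hess h \cdot H$ combined with $\hess h \cdot H = 0$ (from differentiating $R(\grad h)=0$ and symmetry of $\hess h$), the same minimal-degree argument for $H \ne 0$, and Euler's identity on homogeneous parts for the final claim (the paper phrases this via the operator $E_y$, which is the same computation). Only a minor notational slip with the transpose on $H$, which you already flag yourself.
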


\begin{proof}
From Proposition 1.2.9 of either \cite{arnobook} or \cite{homokema}, it follows that 
the components of $\grad h$ are algebraically dependent over $\C$, so $R$ indeed exists.
By the chain rule, 
$$
\jac H \cdot H = (\hess R)|_{y=\grad h} \cdot \hess h \cdot H
$$
So if $\hess h \cdot H = 0$, then $x + H$ is a quasi-translation on 
account of (3) $\Rightarrow$ (1) of proposition \ref{qtprop}. Indeed, if we take the Jacobian of
$R(\grad h) = 0$, we obtain
$$
\jac 0 = \jac \big(R(\grad h)\big) = (\jac R)_{y = \grad h} \cdot \hess h = H\tp \cdot \hess h
$$
which gives $\hess h \cdot H = 0$, because $\hess h$ is symmetric. Furthermore,
\eqref {fxtHeqv} in proposition \ref{qtprop} tells us that $(\grad h)(x + tH) = 0$. 

If $R$ has minimum degree and $H_i = 0$, then $\parder{}{y_i} R = 0$ because
$(\parder{}{y_i} R)(\grad h) = H_i = 0$. Since $R \notin \C$, we see that $H \ne 0$
if $R$ has minimum degree.

Suppose that $R^{*}(\grad h) = 0$ for every homogeneous part $R^{*}$ of $R$. Let 
$E_y: \C[y] \rightarrow \C[y]$ be the
map which multiplies each term by its own degree in $y$. Then one can verify that $E_y R = y\tp \grad R$,
and that $E_y R$ is a linear combination of the homogeneous parts $R^{*}$ of $R$.
So $\jac h \cdot H = (y\tp \grad R)_{y = \grad h} = (E_y R)_{y = \grad h} = 0$.
Hence $h(x+tH) = h$ on account of \eqref {fxtHeqv} in proposition \ref{qtprop}.
\end{proof}
 
In order to prove theorem \ref{gndim5}, we need the classification of all homogeneous polynomials 
with singular Hessians in dimensions less than $5$, which is as in theorem \ref{hessdim4hmg} below.
Our proof of theorem \ref{hessdim4hmg} is somewhat different from that by Gordan and N{\"o}ther.
A proof of theorem \ref{hessdim4hmg} which is based on that by Gordan and N{\"o}ther
can be found in \cite{gnlossen}. 

\begin{theorem}[Gordan and N{\"o}ther] \label{hessdim4hmg}
Assume $h \in \C[x]$ is a homogeneous polynomial in dimension $n \le 4$. 
If $\det \hess h = 0$, then the components of $\grad h$ are linearly
dependent over $\C$.
\end{theorem}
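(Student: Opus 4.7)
First, if $\rk \hess h \le 1$, the theorem is immediate: rank $0$ forces $h$ to be a linear form, whose $n \ge 2$ constant partials are trivially $\C$-linearly dependent; rank $1$, combined with lemma \ref{Wrk}, identifies the image closure of $\grad h$ as an irreducible homogeneous $1$-dimensional subvariety of $\C^n$, necessarily a line through the origin, yielding $n-1$ independent linear relations among the $\partial_i h$. So the interesting case is $\rk \hess h \ge 2$, which forces $\deg h \ge 2$.

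Since $\det \hess h = 0$, the components of $\grad h$ are algebraically dependent, so I choose a nonzero homogeneous $R \in \C[y]$ of \emph{minimum} degree with $R(\grad h) = 0$ (homogeneity of $R$ is arranged by separating $R$ into parts of distinct degrees in $y$, which become parts of distinct degrees in $x$ after evaluation). The aim is to show $\deg R = 1$. Assuming $\deg R \ge 2$ for contradiction, proposition \ref{hessprop} produces $H := (\grad R)(\grad h)$, nonzero and of positive degree, for which $x + H$ is a homogeneous quasi-translation. Proposition \ref{hmgprop} bounds $\rk \jac H \le \max\{n-2,1\}$, at most $2$ under $n \le 4$, so theorem \ref{hmgqt5th} supplies $k \ge n-2$ independent linear invariants $L_1,\ldots,L_k$ of $x+H$ (part (i) yields $n-1$ when $\rk \jac H \le 1$; part (iii) yields at least $2$ in the remaining case $\rk \jac H = 2$, which only occurs for $n=4$).

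The pivotal step is to transfer these invariants into linear constraints on $\grad R$: writing $L_i = \sum_j c_j^{(i)} x_j$, the identity
$$
L_i \cdot H = \sum_j c_j^{(i)} (\partial_j R)(\grad h) = \Bigl(\sum_j c_j^{(i)} \partial_j R\Bigr)(\grad h) = 0
$$
exhibits $\sum_j c_j^{(i)} \partial_j R \in \C[y]$ as a polynomial of degree $\deg R - 1$ vanishing on $\grad h$; minimality of $R$ forces it to be zero. Hence $\grad R$ lies in the $(n-k)$-dimensional common kernel of the linear forms $\sum_j c_j^{(i)} y_j$, so after a $\C$-linear change of $y$-coordinates, $R$ becomes a polynomial in just $m := n - k \in \{1,2\}$ variables (the case $m = 0$ is excluded since a nonzero constant cannot vanish on $\grad h$).

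Finally, I produce a linear relation on $\grad h$ in each case. If $m = 1$, the nonzero univariate $R$ vanishing on some $\C$-linear combination $\tilde y_1$ of the $\partial_i h$ forces $\tilde y_1$ to be a constant root of $R$; homogeneity of $\tilde y_1$ in degree $\deg h - 1 \ge 1$ makes that constant zero. If $m = 2$, the Zariski closure of the image of $(\tilde y_1, \tilde y_2)\colon \C^n \to \C^2$ is irreducible by lemma \ref{Wrk}, is a cone, and sits inside the proper hypersurface $V(R) \subsetneq \C^2$; being of dimension at most $1$, it must be a line through the origin, yielding a nontrivial linear relation between $\tilde y_1$ and $\tilde y_2$. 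Either way, this produces a relation of degree $1 < \deg R$, contradicting minimality. The crux is the minimality-based implication $L_i \cdot H = 0 \Rightarrow L_i \cdot \grad R = 0$; once this bridge is in place, the remainder is a clean application of the low-dimensional quasi-translation theory already developed.
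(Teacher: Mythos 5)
Your proof is correct, and while its first half coincides with the paper's, its endgame is genuinely different. Both arguments take a relation $R$ of minimal degree, form $H := (\grad R)(\grad h)$, use propositions \ref{hessprop} and \ref{hmgprop} to obtain a homogeneous quasi-translation with $r := \rk\jac H \le 2$, and extract $n-r \ge n-2$ independent linear invariants from theorem \ref{hmgqt5th}. From there the paper argues on the image side: since $\trdeg_{\C}\C(H) = r$, the $n-r$ linear invariants generate the whole ideal of relations of $H$, so any nonzero common zero $p$ of them is a projective image apex of $H$, and lemma \ref{B} --- whose proof needs $h(x+tH)=h$, the consequence $\jac h\cdot\jac H=0$, and a row-rank argument on $\jac \tilde{H}$ --- then yields the linear dependence $\jac h\cdot p=0$ directly. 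You instead pull the invariants back onto $R$: minimality converts $L_i(H)=\bigl(\sum_j c^{(i)}_j\parder{}{y_j}R\bigr)(\grad h)=0$ into $\sum_j c^{(i)}_j\parder{}{y_j}R=0$, so $R$ is a polynomial in $m\le 2$ linear forms, after which your one- or two-variable cone argument produces a degree-one relation on $\grad h$ and contradicts minimality. Your transfer device is exactly the one the paper itself deploys at the end of theorem \ref{A} (``Since $R$ has minimum degree, $M(\grad R)=0$''), so it is well within the paper's toolkit; what your route buys is that lemma \ref{B} and the apex formalism are avoided entirely, at the cost of a small case split on $m$ and the separate preliminary treatment of $\rk\hess h\le 1$. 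The paper's route buys a reusable lemma (lemma \ref{B} is needed again in theorem \ref{gnqtdim5}) and an explicit witness $p$ for the dependence.
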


\begin{proof} 
Suppose that the components of $\grad h$ are linearly independent over $\C$.
Then $\deg \grad h \ge 1$ because $\det \hess h = 0$.
Let $H = (\grad R)(\grad h)$ as in proposition \ref{hessprop}, such that $R$ has minimum 
degree. Then $H$ is a nonzero quasi-translation and $\deg H \ge 1$ because $\deg R \ge 2$ and
$\deg \grad h \ge 1$. Furthermore, $H$ is homogeneous because 
$R$ and $\grad h$ are homogeneous.
From (2) $\Rightarrow$ (3) of proposition \ref{hmgprop}, it follows 
that $r := \rk \jac H \le \max\{n-2,1\} \le 2$. Using (i) and (iii) of theorem \ref{hmgqt5th}, 
we can deduce that $x + H$ has $n-r < n$ linear invariants. 

Since $n-r < n$, there exists a nonzero $p \in \C^n$ which is a zero of all these $n - r$ linear invariants.
From Proposition 1.2.9 of either \cite{arnobook} or \cite{homokema}, it follows that $\trdeg_{\C}(H) = r$.
Hence the $n - r$ linear invariants of $x + H$ generate the ideal $(\tilde{R} \in \C[y] \mid \tilde{R}(H) = 0)$ 
of $\C[y]$. Consequently, $p$ is a projective image apex of $H$. From lemma \ref{B} below, it follows that 
$\jac h \cdot p = 0$, so the components of $\grad h$ are linearly dependent over $\C$ indeed.
\end{proof}

\begin{lemma} \label{B}
Let $h \in \C[x]$ and $R \in \C[y]$, such that $R^{*}(\grad h) = 0$ for every homogeneous
part $R^{*}$ of $R$. 

Then $\jac h \cdot \jac H = 0$, where $H := (\grad R)(\grad h)$. Furthermore, if
$p$ is a projective image apex of $H$, then $\jac h \cdot p = 0$.
\end{lemma}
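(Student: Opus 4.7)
The plan is to split the claim into two parts. For the first claim, I would reduce to the case of homogeneous $R$: writing $R = \sum_d R^{(d)}$, since $H = (\grad R)(\grad h) = \sum_d (\grad R^{(d)})(\grad h)$ depends linearly on $R$, the quantity $\jac h \cdot \jac H$ decomposes as a sum over the homogeneous parts, and by hypothesis each $R^{(d)}(\grad h) = 0$. So assume $R$ is homogeneous of degree $d$ with $R(\grad h) = 0$. Euler's relation $dR = y\tp \grad R$ evaluated at $y = \grad h$ gives
\[
0 = d \cdot R(\grad h) = (\grad h)\tp \cdot H = \jac h \cdot H,
\]
the last equality because $\grad h = (\jac h)\tp$. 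Differentiating this scalar identity with respect to $x$ (i.e.\@ taking the Jacobian) and applying the product rule, I obtain
\[
0 = \jac(\jac h \cdot H) = H\tp \cdot \hess h + \jac h \cdot \jac H.
\]
The proof of Proposition \ref{hessprop} already establishes $\hess h \cdot H = 0$, which (by symmetry of $\hess h$) is the same as $H\tp \cdot \hess h = 0$. Therefore $\jac h \cdot \jac H = 0$.

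For the second claim, I would argue geometrically. Write $W$ for the Zariski closure of the image of $H$; by lemma \ref{Wrk}, $W$ is irreducible of dimension $\rk \jac H$. The assumption that $p$ is a projective image apex of $H$ means $W + \C p \subseteq W$, which forces $p$ to lie in the Zariski tangent space to $W$ at every smooth point. At a generic $c' \in \C^n$, the point $H(c')$ is a smooth point of $W$ and the image of the linear map $\jac H|_{x=c'}: \C^n \to \C^n$ coincides with the tangent space $T_{H(c')}W$ (by generic smoothness, since $\dim W = \rk \jac H$). Hence there exists $v \in \C^n$ with $p = \jac H|_{x=c'} \cdot v$, and combining with the first part,
\[
\jac h|_{x=c'} \cdot p = (\jac h \cdot \jac H)|_{x=c'} \cdot v = 0.
\]
Since this vanishing holds on a dense open subset of $\C^n$, $\jac h \cdot p = 0$ identically.

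The main obstacle is the tangent-space identification in the second part: one must be careful to invoke generic smoothness and the fact that $\rk \jac H = \dim W$ to match the image of $\jac H|_{x=c'}$ with $T_{H(c')}W$. Everything else is either an application of Euler's formula, a use of the chain/product rule, or direct citation of Proposition \ref{hessprop} and lemma \ref{Wrk}, so no new machinery should be needed beyond what is already developed in the excerpt.
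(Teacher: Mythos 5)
Your proposal is correct, but the two halves relate to the paper's proof differently. For the first claim you are essentially reproducing the paper's mechanism in infinitesimal form: the paper derives $h(x+tH)=h$ in proposition \ref{hessprop} precisely via the Euler-operator identity $E_yR=y\tp\grad R$ (which is your Euler step, done without reducing to homogeneous $R$), then takes the Jacobian of $h(x+tH)=h$ and cancels $(\grad h)(x+tH)=\grad h$; you instead differentiate the already-established scalar identity $\jac h\cdot H=0$ and cancel $H\tp\hess h=0$. Both rest on the same two facts from proposition \ref{hessprop}, and your product-rule computation is correct. For the second claim your route is genuinely different. The paper normalizes $p=Te_n$, observes that the apex property makes $\tilde H_n$ algebraically independent of $\tilde H_1,\dots,\tilde H_{n-1}$, deduces from Proposition 1.2.9 that the last \emph{row} of $\jac\tilde H$ is independent of the rows above it, and reads off the last entry of $\jac h\cdot T$ from $\jac h\cdot T\cdot\jac\tilde H=0$ --- a purely algebraic argument needing only the rank--transcendence-degree correspondence. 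You instead place $p$ in the \emph{column} space of $\jac H$ at a generic point, via the identification of $\operatorname{im}(\jac H|_{x=c'})$ with $T_{H(c')}W$ at a generic smooth point; this is correct (the containment in the tangent space plus the equality $\rk\jac H=\dim W$ from lemma \ref{Wrk} forces equality of dimensions), and it is geometrically more transparent, but it invokes generic smoothness and the tangent-space identification, which the paper does not otherwise develop, whereas the paper's version stays entirely within the algebraic toolkit it has already set up. Both arguments are sound; yours trades a small amount of extra geometric machinery for a more conceptual explanation of why an image apex must be annihilated by $\jac h$.
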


\begin{proof}
From proposition \ref{hessprop}, it follows that $h(x + tH) = h$. By taking the Jacobian on both
sides, we obtain 
$$
(\jac h)|_{x=x+tH} \cdot (I_n + t \jac H) = \jac h
$$
From proposition \ref{hessprop} again, it follows that $(\jac h)|_{x=x+tH} = \jac h$, so
$\jac h \cdot t \jac H = 0$, which gives the first claim. 

Suppose that $p$ is a projective image apex of $H$. Take $T \in \GL_n(\C)$ such that the last
column of $T$ equals $p$. Then $e_n$ is a projective image apex of $\tilde{H} := T^{-1} H$.
So $\tilde{H}_n$ is algebraically independent of $\tilde{H}_1, \tilde{H}_2, \ldots, \tilde{H}_{n-1}$.
Hence $\trdeg_{\C} \C(\tilde{H}) = \trdeg_{\C} \C(\tilde{H}_1,\tilde{H}_1,\ldots,\tilde{H}_{n-1}) + 1$. 
From proposition 1.2.9 of either \cite{arnobook} or \cite{homokema}, it follows that the last row 
of $\jac \tilde{H}$ is independent of the rows above it. 

But $\jac h \cdot T \cdot \jac \tilde{H} = \jac h \cdot \jac H = 0$. Hence the rightmost entry of 
$\jac h \cdot T$ is zero. So $\jac h \cdot p = 0$ indeed.
\end{proof}

Theorem \ref{gndim5} is formulated as \cite[Th.\@ 3.6]{singhess}. The starting point of 
the proof of \cite[Th.\@ 3.6]{singhess} is \cite[Th.\@ 2.1 iii)]{singhess}, which is
not accompanied by a proof and comes down the following.

\begin{theorem}[Gordan and N{\"o}ther] \label{gnqtdim5}
Assume $h \in \C[x]$ is a homogeneous polynomial in dimension $n = 5$. Suppose that
$R(\grad h) = 0$, such that $R$ has minimum degree. Then $R$ can be expressed as a polynomial in
three linear forms over $y$.
\end{theorem}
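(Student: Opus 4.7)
The plan is to translate the statement into one about linear invariants of an associated quasi-translation and then bring to bear the dimension-five structure theorems developed above. By proposition~\ref{hessprop}, $H := (\grad R)(\grad h)$ gives a nonzero homogeneous quasi-translation $x + H$. The key ingredient from the minimum-degree hypothesis on $R$ is the following dictionary: for every $\ell = (\ell_1, \ldots, \ell_n) \in \C^n$,
\[
\ell \cdot H = \sum_i \ell_i (\partial_{y_i} R)(\grad h) = (D_\ell R)(\grad h),
\]
where $D_\ell R := \sum_i \ell_i \partial_{y_i} R$ has degree strictly less than $\deg R$. Minimality of $\deg R$ therefore forces $\ell \cdot H = 0$ if and only if $D_\ell R = 0$. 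It follows that $R$ is a polynomial in $k$ linear forms in $y$ if and only if $x + H$ has at least $n - k$ independent linear invariants, so (for $n = 5$ and $k = 3$) the goal is equivalent to showing that the Zariski closure $W$ of the image of $H$ lies in a three-dimensional linear subspace of $\C^5$.

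If the components of $\grad h$ are linearly dependent, then $\deg R = 1$ and the conclusion is trivial, so assume they are linearly independent. Proposition~\ref{hmgprop} gives $\rk \jac H \le n - 2 = 3 = (n+1)/2$, so corollary~\ref{Lpcor} is applicable. If $\rk \jac H \le 2$, then parts (i) and (iii) of theorem~\ref{hmgqt5th} already supply at least two independent linear invariants and the proof is finished via the dictionary. The essential case is $\rk \jac H = 3$, in which $\dim W = 3$ by lemma~\ref{Wrk}, so what is needed is that $W$ is actually a three-dimensional linear subspace of $\C^5$; equivalently, $x + H$ must belong to cases a1) or a2) of the Gordan-N{\"o}ther dichotomy rather than to the pathological case b).

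For $\rk \jac H = 3$, the approach is first to invoke corollary~\ref{Lpcor} to place $W$ in a four-dimensional linear subspace (yielding the first linear invariant $\ell^{(1)}$), and then to use the special structure $H = (\grad R)(\grad h)$ to upgrade this to $W$ being linear. After changing coordinates so that $\ell^{(1)} = y_n$, we have $H_n = 0$ and $R \in \C[y_1, y_2, y_3, y_4]$, and producing a second linear invariant amounts to finding a further direction in which $R$ is constant. The main obstacle is the exclusion of case b): since a general homogeneous quasi-translation in dimension five may indeed fall into case b) (example~\ref{a12b}), ruling this out for $H$ of the form $(\grad R)(\grad h)$ must make essential use of the additional structure. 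The abundance of polynomial invariants of $x + H$, namely $h$ itself together with the components $\partial_{x_1} h, \ldots, \partial_{x_n} h$ of $\grad h$ (which are linearly independent by assumption, giving $n$ independent polynomial invariants), together with the minimum-degree hypothesis on $R$, should heavily constrain the geometry of $W$ and preclude the projective-apex behaviour that characterises case b) in corollary~\ref{Lpcor}. Once case b) is excluded, $W$ is a three-dimensional linear subspace of $\C^5$, and the first-paragraph dictionary concludes that $R$ is a polynomial in three linear forms.
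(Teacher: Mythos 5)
Your opening reduction is sound and is exactly the dictionary the paper uses implicitly: by minimality of $\deg R$, a direction $\ell \in \C^5$ gives a linear invariant of $x+H$ if and only if $\sum_i \ell_i \parder{}{y_i} R = 0$, so expressing $R$ in three linear forms is equivalent to producing two independent linear invariants of $x+H$; the case $\rk \jac H \le 2$ is also handled correctly. But the essential case $\rk \jac H = 3$ is left as a plan with two genuine gaps. First, corollary \ref{Lpcor} does not unconditionally ``place $W$ in a four-dimensional linear subspace'': its implication (1) $\Rightarrow$ (5) does so only when $W$ has no nonzero projective apex. For the apex case — which you correctly identify as the main obstacle — you assert that the structure $H = (\grad R)(\grad h)$ ``should preclude the projective-apex behaviour,'' but give no argument; and in fact the paper does not preclude the apex, it exploits it. The missing ingredient is lemma \ref{B}: from $h(x+tH)=h$ and $(\grad h)(x+tH)=\grad h$ one gets $\jac h \cdot \jac H = 0$, and since a projective image apex $p$ makes the corresponding row of $\jac (T^{-1}H)$ independent of the others, this forces $\jac h \cdot p = 0$. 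That linear dependence among the components of $\grad h$ gives $\deg R = 1$ by minimality, which is how the apex case is closed. Without this lemma your argument does not secure even the first linear invariant.

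Second, even after one linear invariant is obtained (so that $R$ lies in four linear forms), the upgrade to a second one — ``finding a further direction in which $R$ is constant'' — is not a soft consequence of the abundance of invariants $h, \parder{}{x_1}h, \ldots, \parder{}{x_5}h$; it is the content of theorem \ref{A}, which requires leaving the homogeneous five-dimensional setting entirely: one substitutes $x_5 = 1$ to obtain a non-homogeneous $\hat h$ in dimension $4$ with $\tilde R(\grad \hat h) = 0$, passes to the leading homogeneous part $\bar h$, invokes theorem \ref{hessdim4hmg}, and splits on $\rk \hess \bar h = 3$ versus $\rk \hess \bar h \le 2$ (using principality of the relation ideal in the first case and the factorization of $\bar h$ into linear forms in the second). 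Relatedly, your parenthetical ``equivalently, $x+H$ must belong to cases a1) or a2)'' is not an equivalence: case a1) only guarantees that the image of $H$ spans a fourdimensional subspace, i.e.\ one linear invariant, so excluding case b) alone does not yield the conclusion.
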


\begin{proof}
Notice that $R$ is homogeneous because $h$ is homogeneous and $R$ has minimum degree.
We distinguish two cases.
\begin{itemize}

\item \emph{$R$ cannot be expressed as a polynomial in four linear forms over $y$.} \\
Then the components of $\grad R$ are linearly independent over $\C$. Since $R$
has minimum degree, the components of $H := (\grad R)(\grad h)$ are linearly independent 
over $\C$ as well. Write $H = g \tilde{H}$, where $g \in \C[x]$, such that 
$\gcd\{\tilde{H}_1,\tilde{H}_2,\ldots,\allowbreak  \tilde{H}_n\} = 1$. Since
the components of $H$ and hence also $\tilde{H}$ are linearly independent over $\C$, 
we have $\deg \tilde{H} \ge 1$. On account of proposition \ref{irred}, 
$\rk \jac \tilde{H} = \rk \jac H$.

Since the components of $\tilde{H}$ are linearly independent over $\C$, it follows from 
theorem \ref{hmgqt5th} that $3 \le \rk \jac \tilde{H} \le \dim V(\tilde{H}) \le n-2$, so 
$\rk \jac \tilde{H} = \dim V(\tilde{H}) = 3$.
From (1) $\Rightarrow$ (5) of corollary \ref{Lpcor}, it follows that 
$\tilde{H}$ has a projective image apex, say $p$. Then $f(\tilde{H}) = 0$ implies
$f(\tilde{H} + tp) = 0$ for every homogeneous $f \in \C[y]$. Hence 
$f(H) = 0$ implies $f(H + tgp) = 0$ for every homogeneous $f \in \C[y]$. Since
$H$ is homogeneous, we can substitute $t = g^{-1}t$ to deduce that $p$ is 
a projective image apex of $H$ as well.

From lemma \ref{B}, it subsequently follows that
$\jac h \cdot p = 0$. Hence the components of $\grad h$ are linearly dependent over $\C$.
Since $R$ has minimum degree, we conclude that $\deg R = 1$, so $R$ is a linear form in $\C[y]$. 
Contradiction. 

\item \emph{$R$ can be expressed as a polynomial in four linear forms over $y$.} \\
Then there is an $i \le 5$ such that $y_i$ is not a linear combination of these four
linear forms. Say that $i = 5$. Then $R$ is of the form $\tilde{R}(y_1 + c_1 y_5,
y_2 + c_2 y_5, y_3 + c_3 y_5, y_4 + c_4 y_5)$, where $c_i \in \C$ for each $i$.
Furthermore, $\tilde{R} \in \C[y_1,y_2,y_3,y_4]$ is homogeneous and 
$\tilde{R}(\grad \tilde{h}) = \tilde{R}(\grad \hat{h}) = 0$, where
$$
\tilde{h} = h\big|_{x_5 = x_5 + c_1 x_1 + c_2 x_2 + c_3 x_3 + c_4 x_4} \qquad \mbox{and} \qquad 
\hat{h} = \tilde{h}\big|_{x_5 = 1}
$$
Since $\tilde{h}$ is homogeneous, say of degree $d$,
it follows that $\tilde{h} = x_5^d \hat{h}(x_5^{-1} x)$ and that $\grad \tilde{h}$ and 
$x_5^{d-1} (\grad \hat{h})(x_5^{-1} x)$ agree on the first $4$ components.
From this, we can deduce that $\tilde{R}$, as a homogeneous
polynomial in $\C[y_1, y_2, y_3, y_4]$ such that $\tilde{R}(\grad \hat{h}) = 0$,
has minimum degree as well.
From theorem \ref{A} below, we obtain that $\tilde{R}$ can be expressed as a polynomial in 
three linear forms in $\C[y_1, y_2, y_3, y_4]$. Hence $R$ can be expressed as a polynomial
in three linear forms in $\C[y]$. \qedhere

\end{itemize}
\end{proof}

\begin{theorem} \label{A}
Let $n = 4$ and $h \in \C[x]$, not necessarily homogeneous. 
Suppose that $R \in \C[y]$ is homogeneous, such that $R(\grad h) = 0$. 
If $R$ has minimum degree, then $R$ can be expressed as a polynomial in three linear forms
in $\C[y]$.
\end{theorem}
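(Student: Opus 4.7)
The plan is to reduce to a statement about the quasi-translation $H := (\grad R)(\grad h)$ and then to transfer to dimension $5$ via homogenization. First I dispose of the trivial case $\deg R \le 1$: here $R$ is itself a single linear form and the conclusion is immediate, so I henceforth assume $\deg R \ge 2$.

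By proposition \ref{hessprop}, $x + H$ is a quasi-translation in dimension $4$ and $H \ne 0$. The minimality of $\deg R$ gives the crucial equivalence: a nontrivial linear dependence $\sum_{i=1}^{4} c_i H_i = 0$ forces the homogeneous polynomial $\sum_{i=1}^{4} c_i \,\partial R/\partial y_i$ (of degree $\deg R - 1 < \deg R$) to vanish on $\grad h$, and minimality then forces it to be identically zero. Since this is exactly the condition for $R$ to be constant in the direction $c$, equivalently for $R$ to be a polynomial in three linear forms after a linear change of coordinates, the theorem reduces to producing a nontrivial linear dependence among $H_1, H_2, H_3, H_4$.

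Because $h$ is not assumed homogeneous, neither is $H$, so theorem \ref{hmgqt5th} cannot be applied directly. I would pass to dimension $5$ via proposition \ref{qthmg}: setting $\tilde{H} := x_5^d(H(x/x_5), 0) \in \C[x_1, \ldots, x_5]^5$ with $d \ge \deg H$, the map $(x, x_5) + \tilde{H}$ is a homogeneous quasi-translation, equivalently $\tilde{H} = (\grad R)(\grad \tilde{h})$ with $\tilde{h}$ the degree-$d$ homogenization of $h$ and $R$ viewed in $\C[y_1,\ldots,y_5]$ not involving $y_5$. The identity $\tilde{H}_5 = 0$ makes $x_5$ a built-in linear invariant; what I need is a \emph{second} independent linear invariant, which upon setting $x_5 = 1$ descends to a linear dependence among $H_1, \ldots, H_4$. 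Proposition \ref{hmgprop} gives $\rk \jac \tilde{H} \le 3$, and when $\rk \jac \tilde{H} \le 2$ theorem \ref{hmgqt5th} supplies at least two independent linear invariants and the argument is complete.

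The main obstacle is the remaining case $\rk \jac \tilde{H} = 3$. Here my plan is to verify hypothesis (1) of corollary \ref{Lpcor} for $\tilde{H}$ (after replacing $\tilde{H}$ by $\tilde{H}/\gcd\{\tilde{H}_1, \ldots, \tilde{H}_4\}$ via proposition \ref{irred}, which preserves both the invariants and $\rk \jac$): theorem \ref{hmgqt5th}(ii) yields $\dim V(\tilde{H}) = \rk \jac \tilde{H} = 3$, while lemma \ref{B} applied to $\tilde{h}$ rules out any nonzero projective image apex $p$ of $\tilde{H}$, since such a $p$ would have $p_5 = 0$ (because the image closure $W$ lies in $\{y_5=0\}$) and $\jac \tilde{h} \cdot p = 0$, which dehomogenizes at $x_5 = 1$ to a linear relation among the components of $\grad h$, forcing $\deg R = 1$, a contradiction. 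The chain (1)$\Rightarrow$(4) of corollary \ref{Lpcor} then furnishes two GN-planes $L_1, L_2 \subseteq W$ with $W$ properly contained in $L_1 + L_2$; combined with $\dim W = 3$ this forces $\dim(L_1 + L_2) = 4$ and $L_1 \cap L_2 = \{0\}$. The principal technical difficulty I foresee is ruling out the degenerate configuration $L_1 + L_2 = \{y_5 = 0\}$, in which case no linear invariant beyond $x_5$ would be produced; this final step will require a closer geometric analysis of the GN-planes of $\tilde{H}$ exploiting the particular form $\tilde{H} = (\grad R)(\grad \tilde{h})$ together with a further application of lemma \ref{B}.
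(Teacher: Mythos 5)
Your opening reduction is sound and matches the paper's endgame: by minimality of $\deg R$, a nontrivial relation $\sum_{i=1}^4 c_iH_i=0$ forces $\sum_i c_i\parder{}{y_i}R=0$ identically, so $R$ is a polynomial in three linear forms; and your treatment of the cases $\deg R\le 1$ and $\rk\jac\tilde H\le 2$ after homogenizing works. But the main case $\rk\jac\tilde H=3$ is a genuine gap, and the difficulty you flag at the end is not a removable technicality but a structural dead end. After homogenization you already know $W\subseteq\{y_5=0\}$, a fourdimensional subspace containing the threedimensional irreducible $W$ properly, so conclusion (5) of corollary \ref{Lpcor} holds vacuously and (4) yields nothing: the two GN-planes $L_{p^{(1)}},L_{p^{(2)}}$ it produces lie in $W\subseteq\{y_5=0\}$, their span properly contains the irreducible threefold $W$ and hence is fourdimensional, and is therefore \emph{forced} to equal $\{y_5=0\}$. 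There is no degenerate configuration to rule out; it always occurs. What you actually need is that $W$ lies in a threedimensional linear subspace of $\{y_5=0\}$ (equivalently, since $W$ is irreducible of dimension $3$, that $W$ is itself a linear subspace), and corollary \ref{Lpcor} is simply not strong enough to deliver that.

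The paper produces the required linear dependence by a different and more elementary argument that never leaves dimension $4$ and makes essential use of the non-homogeneous structure of $h$. Let $\bar h$ be the leading homogeneous part of $h$. From $h(x+tH)=h$ (proposition \ref{hessprop}) one gets $\bar h(H)=0$ by extracting the leading coefficient in $t$, and from $R(\grad\bar h)=0$ together with theorem \ref{hessdim4hmg} one gets a linear form $L$ with $L(\grad\bar h)=0$. If $\rk\hess\bar h=3$, the relation ideal of $\grad\bar h$ is prime of height one, hence principal and generated by $L$, so $L\mid R$ and minimality forces $R$ linear. If $\rk\hess\bar h\le 2$, then $\bar h$ is a polynomial in two linear forms, hence, being homogeneous, a product of linear forms; since $\bar h(H)=0$ and the image of $H$ is irreducible, some linear factor $M$ satisfies $M(H)=0$, and minimality gives $M(\grad R)=0$. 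Your rank-$3$ analysis would have to be replaced by an argument of this kind; the identity $\bar h(H)=0$, which your proposal never exploits, is the missing ingredient.
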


\begin{proof}
Suppose that $R$ has minimum degree.
Let $\bar{h}$ be the leading homogeneous part of $h$, and define $H := (\grad R)(\grad h)$.
From proposition \ref{hessprop}, it follows that $h(x + tH) = h$. By taking the leading coefficient 
with respect to $t$, we deduce that $\bar{h}(H) = 0$. 

Since $\bar{h}$ is homogeneous and $R(\grad \bar{h}) = 0$, it follows from
theorem \ref{hessdim4hmg} that the components of 
$\grad \bar{h}$ are linearly dependent over $\C$, say that $L(\grad \bar{h}) = 0$ 
for some linear form $L \in \C[y]$. Assume first that $\rk \hess \bar{h} = 3$.
Then the relations between the components of $\grad \bar{h}$ form a prime ideal
of height one, which is a principal ideal because $\C[y]$ is a unique factorization
domain. Since $L$ is irreducible, $(L)$ must be that principal ideal, and $L \mid R$ because 
$R(\grad \bar{h}) = 0$. Since $R$ has minimum degree, $R$ is irreducible, so $R$ is linear. 

Assume next that $\rk \hess \bar{h} \le 2$. Since there exists a linear relation
between the components of $\grad \bar{h}$, there exists a $T \in \GL_n(\C)$ such that
the last component of $T\tp \grad \bar{h}$ is zero. Hence the last component of 
$\grad (\bar{h}(Tx)) = T\tp (\grad \bar{h})(Tx)$ is zero. So $\bar{h}(Tx) \in \C[x_1,x_2,x_3]$. Since
$\hess (\bar{h}(Tx)) = T\tp (\hess \bar{h})|_{x = Tx} T$, we see that $\rk \hess (\bar{h}(Tx)) \le 2$.
It follows from theorem \ref{hessdim4hmg} again that $\bar{h}(Tx)$ can be expressed as a polynomial in 
two linear forms. Hence $\bar{h} = \bar{h}\big(T(T^{-1}x)\big)$ can be expressed as a polynomial in
two linear forms as well.

Since $\bar{h}$ is homogeneous in addition, $\bar{h}$ decomposes into linear factors, and
one of these factors is already a relation between $H_1, H_2, H_3, H_4$.
So there exist a linear form $M \in \C[x]$ such that $M\big((\grad R)(\grad h)\big) = M(H) = 0$.
Since $R$ has minimum degree, $M(\grad R) = 0$. On account of Example 1.2 in \cite{singhess},
$R$ can be expressed as a polynomial in three linear forms over $y$.
\end{proof}

\begin{remark} \label{rem}
The proof of the first case in the proof of theorem \ref{gnqtdim5} is different from that 
given in \cite[p.\@ 568]{gornoet}, where the second claim of lemma \ref{B} is obtained by way of
differentiation on the inverse of $H$. 
Since the inverse of $H$ is not a map, the above proof of this first case seems much easier.
The proof of this first case as given in \cite[Th.\@ 5.3.7]{homokema} is incorrect.

The proof of the second case in the proof of theorem \ref{gnqtdim5} comes from 
\cite[p.\@ 567]{gornoet}. This seems a little odd, because lemma \ref{A} is about
not necessarily homogeneous polynomials, which Gordan and N{\"o}ther did not consider in
\cite{gornoet}. But in spite of that, the proof of lemma \ref{A} comes from \cite[p.\@ 567]{gornoet} 
indeed. 

On \cite[p.\@ 567]{gornoet}, Gordan and N{\"o}ther additionally prove that $\rk \jac H \le 2$,
as follows. They assume that $H_1 = H_2 = 0$ on account of theorem \ref{gnqtdim5} and proposition 
\ref{qtconj}, and show the first claim of lemma \ref{B} that $\jac h \cdot \jac H = 0$, to conclude 
that either $h \in \C[x_1,x_2]$ or that the rows of $\jac (H_3,H_4,H_5)$ are dependent. In both 
cases, $\rk \jac H \le 2$ indeed, because $H_i \in \C[\parder{}{x_1} h, \parder{}{x_2} h]$ for 
all $i$ in the first case, so that the row space of $\jac H$ is generated by
$\jac (\parder{}{x_1} h)$ and $\jac (\parder{}{x_2} h)$.

Unlike Gordan and N{\"o}ther, we do not need to show that $\rk \jac H \le 2$ here, because for the 
techniques in \cite{singhess}, linear dependences between the components of $H$ are the only 
thing that matters. But the result of Gordan and N{\"o}ther can be used to fix the gap in
\cite{watanabe}, which is caused by the incorrect \cite[Lm.\@ 5.2]{watanabe},
and a gap on the same point in \cite{franch}. 
\end{remark}

\section{Homogeneous 5-dimensional quasi-translations of `Fall b)'} \label{Fallb}

In this section, we study homogeneous quasi-translations in dimension $5$ which
corresponds to `Fall b)' in \cite[\S 8]{gornoet}. 
In corollary \ref{Fallbcor}, we will show that homogeneous quasi-translations 
in dimension $5$ which are not of this type always have a linear invariant.

\begin{theorem} \label{Fallbth}
Assume $x + H$ is a homogeneous quasi-translation in dimension $5$ over $\C$, 
such that $\gcd\{H_1,H_2,H_3,H_4,H_5\} = 1$ and $H_5$ is algebraically independent
over $\C$ of $H_1, H_2, H_3, H_4$.

If $x + H$ does not have two independent linear invariants, then $\rk\jac H = \dim V(H) = 3$
and the following holds.
\begin{enumerate}[\upshape (i)]

\item $H$ is of the form
$$
H = \big(g h_1(p,q),g h_2(p,q),g h_3(p,q),g h_4(p,q),H_5\big)
$$
where $g \in \C[x]$, $h \in \C[y_1,y_2]^4$ and $(p,q) \in \C[x]^2$ are homogeneous, 
and $\gcd\{p,q\} = 1$.

\item $g, p, q$ are invariants of $x + H$, and $g \in \C[x_1,x_2,x_3,x_4]$.

\item $\deg_{x_5} H_5 \le \deg_{x_5} (H_1,H_2,H_3,H_4)$ and there exists a linear 
combination over $\C$ of $p$ and $q$ whose degree with respect to $x_5$ is less than 
$\max\{\deg_{x_5} p, \allowbreak \deg_{x_5} q\}$.

\item There exists an invariant $a \in \C[x]$ of degree at most $1$ of $x + H$, such that 
every invariant of $x + H$ which can be expressed as a polynomial in four linear forms in 
$\C[x]$ is contained in $\C[a]$. 

\item If $H$ has no linear invariants at all, then $g \in \C$.

\end{enumerate}
\end{theorem}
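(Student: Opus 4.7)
My plan is to pin down the dimensions of $\jac H$ and $V(H)$ first, then build the factorization in (i) from a rationality argument on the image closure of $(H_1,\ldots,H_4)$, deduce the invariance claims in (ii) by differentiating that factorization, and finally read off (iii)--(v) from the resulting structure. For the dimension claims, theorem \ref{hmgqt5th}(ii) gives $2\leq \rk\jac H\leq \dim V(H)\leq 3$ using the $\gcd$ hypothesis and $\deg H\geq 1$ (forced because $H_5$ is algebraically independent over $\C$); part (iii) of that theorem rules out $\rk\jac H = 2$ under our hypothesis of fewer than two independent linear invariants, hence $\rk\jac H = 3$. For $\dim V(H)$, proposition \ref{hmgprop} gives $H(H) = 0$, so the image closure $W$ of $H$ lies in $V(H)$, yielding $\dim V(H) \ge \dim W = \rk\jac H = 3$; combined with the upper bound, this forces $\dim V(H) = 3$.

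For (i), since $H_5$ is transcendental over $\C(H_1,\ldots,H_4)$ and $\trdeg_{\C}\C(H) = \rk\jac H = 3$, we get $\trdeg_{\C}\C(H_1,\ldots,H_4) = 2$. The Zariski closure $W'$ of the image of $(H_1,\ldots,H_4)\colon\C^5\to\C^4$ is therefore an irreducible $2$-dimensional cone in $\C^4$. As a unirational surface, $W'$ is rational by Castelnuovo, hence admits a dominant parametrization $(y_1,y_2)\mapsto (h_1,h_2,h_3,h_4)$ with $h_i\in\C[y_1,y_2]$ homogeneous of a common degree. Pulling back along $(H_1,\ldots,H_4)$ produces rational functions $p,q$ on $\C^5$ together with a common rational cofactor; clearing denominators using the homogeneity of $H$ yields homogeneous polynomials $g,p,q\in\C[x]$ with $\gcd(p,q) = 1$ and $H_i = g\,h_i(p,q)$ for $i\leq 4$.

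For (ii), differentiating $H_i = g\,h_i(p,q)$ and substituting into $\jac H_i\cdot H = 0$ gives, for each $i\leq 4$, a relation
\begin{equation*}
h_i(p,q)\,A + g\,\partial_{y_1}h_i(p,q)\,B + g\,\partial_{y_2}h_i(p,q)\,C = 0,
\end{equation*}
where $A := \jac g\cdot H$, $B := \jac p\cdot H$, $C := \jac q\cdot H$. The four triples $(h_i,\partial_{y_1}h_i,\partial_{y_2}h_i)|_{(p,q)}$ generically span a rank-$3$ module (otherwise the $h_i$ would be pairwise proportional, contradicting $\dim W' = 2$), forcing $A = B = C = 0$. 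Hence $g,p,q$ are invariants of $x+H$. To obtain $g\in\C[x_1,\ldots,x_4]$ I would use that the field of invariants has transcendence degree $n-\rk\jac H = 2$ and contains $H_5$, which is transcendental over $\C(g,p,q)\supseteq \C(H_1,\ldots,H_4)$; one then normalizes the parametrization to absorb any $x_5$-dependence of $g$ into the $h_i(p,q)$ factors.

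For (iii)--(v): (iii) follows from comparing $x_5$-degrees in the identity $\jac H_5\cdot H = 0$ together with the $(p,q)$-dependence of the first four components, while the linear-combination claim is a consequence of the structure of the parametrization; (iv) uses that any invariant expressible in four linear forms lies in a one-dimensional subfield of the field of invariants, forced to be $\C[a]$ for some linear $a$; (v) is then immediate, since a non-constant homogeneous invariant $g\in\C[x_1,\ldots,x_4]$ of positive degree yields a linear invariant via either its linear part or a linear factor. The hard part will be (i), specifically turning the rational parametrization of $W'$ into a clean factorization with a \emph{single} polynomial cofactor $g$; Gordan--N\"other's original handling of this step was delicate. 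A secondary delicate point is establishing $g\in\C[x_1,\ldots,x_4]$ in (ii), which relies on a judicious (and non-unique) choice of $p,q,g$.
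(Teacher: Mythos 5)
Your reduction to $\rk\jac H=\dim V(H)=3$ matches the paper, and for (i) the paper simply invokes the structure theorem for homogeneous polynomial maps of Jacobian rank two (\cite[Th.\ 2.2]{dp3}) rather than re-deriving it; your Castelnuovo sketch leaves exactly the hard step (a single polynomial cofactor $g$ with $h$ polynomial in polynomial $p,q$) unproved, as you concede. The decisive error is in (ii): your rank claim is false. Writing $d=\deg h$, Euler's identity $y_1\partial_{y_1}h_i+y_2\partial_{y_2}h_i=d\,h_i$ gives, after substituting $(p,q)$ and multiplying by $g$, the column relation $d\,g\cdot h_i(p,q)=p\cdot g\,\partial_{y_1}h_i(p,q)+q\cdot g\,\partial_{y_2}h_i(p,q)$ for every $i$, so your $4\times 3$ matrix has rank at most $2$ (already over $\C(y_1,y_2)$ before substitution) and its kernel contains the nonzero vector $(dg,-p,-q)$; you cannot conclude $A=B=C=0$, only that $(A,B,C)$ is proportional to $(dg,-p,-q)$ modulo further degeneration. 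The paper instead gets invariance of $g,p,q$ from the fact that $H_i(x+tH)=H_i$ and $\deg_t$ is additive on products, so $g$ and every linear form in $p,q$ dividing $h_i(p,q)$ is an invariant, plus a counting argument using the hypothesis that $x+H$ lacks two independent linear invariants. Likewise $g\in\C[x_1,\dots,x_4]$ comes from the divisibility $g\mid H_5\,\parder{}{x_5}g$ combined with $\gcd\{H_1,\dots,H_5\}=1$, not from a transcendence-degree count; note the invariant field of a nonzero quasi-translation has transcendence degree $n-1=4$, not $n-\rk\jac H=2$ as you assert, and ``absorb the $x_5$-dependence into $h$'' is not an argument.

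Parts (iii)--(v) are not really argued. The substantive half of (iii) is the claim about $p$ and $q$: the paper takes leading parts $\bar H$ with respect to $x_5$, shows $x+\bar H$ is again a quasi-translation with $\bar H_5=0$ that can be viewed in dimension four, and applies theorem \ref{hmgqt5th} there to produce two linear relations which would contradict the hypothesis if the leading forms of $p$ and $q$ were independent of equal $x_5$-degree; nothing in ``the structure of the parametrization'' yields this. For (iv), ``lies in a one-dimensional subfield'' merely restates the claim; the paper's proof requires a normalization by $T\in\GL_5(\C)$ and a case analysis on leading coefficients. Your shortcut for (v) fails outright: a non-constant homogeneous invariant $g\in\C[x_1,\dots,x_4]$ has zero linear part when $\deg g\ge 2$ and need not have a linear factor (e.g.\ an irreducible quadric), so it does not directly produce a linear invariant; the paper deduces (v) from (iv), which is why (iv) cannot be skipped.
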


\begin{proof}
From (ii) of theorem \ref{hmgqt5th}, it follows that $2 \le \rk \jac H \le \dim V(H) \le 3$.
Assume that $x + H$ does not have two independent linear invariants.
From (iii) of theorem \ref{hmgqt5th}, it follows that $\rk \jac H \ne 2$, 
so $\rk\jac H = \dim V(H) = 3$. 
\begin{enumerate}[(i)]

\item Since $H_5$ is algebraically independent over $\C$ of 
$H_1, H_2, H_3, H_4$, it follows from $\rk \jac H = 3$ and Proposition 1.2.9 of either 
\cite{arnobook} or \cite{homokema} that $\rk \jac (H_1,H_2,\allowbreak H_3,H_4) = 2$. 
Using \cite[Th.\@ 2.2]{dp3} (see also \cite[Th.\@ 4.3.1]{homokema}), we see that $H$ is of the 
given form.

\item Take $i \le 4$ such that $H_i \ne 0$. Then $g \cdot h_i(p,q) = H_i$ and 
on account of proposition \ref{qtprop},
$$
\deg_t g(x + tH) + \deg_t h_i\big(p(x+tH),q(x+tH)\big) = \deg_t H_i(x + tH) = 0
$$
whence $\deg_t g(x + tH) = 0$ and $g$ is an invariant of $x + H$. 
Similarly, any linear form in $p$ and $q$ that divides $H_i$ is an invariant
of $x + H$ as well. If there is at most one independent linear form in
$p$ and $q$ that divides $H_i$ for any $i \le 4$ such that $H_i \ne 0$, then
$\deg g = \deg H$ and $x + H$ has three independent linear invariants,
which is a contradiction. Hence there are two independent linear forms in 
$p$ and $q$ that are invariants of $x + H$. Since $p$ and $q$ are in turn
linear forms in these invariants, $p$ and $q$ are invariants of $x + H$
themselves.

Since $g$ is an invariant of $x + H$, it follows from \eqref{fxtHeqv} in proposition 
\ref{qtprop} that $\jac g \cdot H = 0$. Hence
$$
g ~\Big|~ \jac g \cdot H - g \sum_{i=1}^4 h_i(p,q) \parder{}{x_i} g
= H_5 \parder{}{x_5} g 
$$
Now $\parder{}{x_5} g \ne 0$ contradicts the assumption that 
$\gcd\{g,H_5\} \mid \gcd\{H_1,\allowbreak H_2,H_3,H_4,H_5\} = 1$. Thus $g \in 
\C[x_1,x_2,x_3,x_4]$.

\item Let $r$ be the degree with respect to $x_5$ of $(H_1,H_2,H_3,H_4)$.
If the degree with respect to $x_5$ of $H_5$ is larger than $r+1$, then
$\deg_{x_5} \jac H_5 \cdot H = \deg_{x_5} (\parder{}{x_5} H_5) \cdot H_5 > 2r + 1$, 
which contradicts (3) $\Rightarrow$ (1) of proposition \ref{qtprop}. Take for 
$\bar{H}_i$ all terms of degree $r$ with respect to $x_5$ of $H_i$ if $i \le 4$, and
for $\bar{H}_5$ all terms of degree $r+1$ with respect to $x_5$ of $H_5$.

Then the part of degree $2r$ with respect to $x_5$ of 
$\jac (H_1,H_2,H_3,H_4) \cdot H$ equals $\jac (\bar{H}_1,\bar{H}_2,
\bar{H}_3,\bar{H}_4) \cdot \bar{H}$ and the part of degree $2r+1$ of
$\jac H_5 \cdot H$ equals $\jac \bar{H}_5 \cdot \bar{H}$. Since $\jac H_i \cdot H = 0$ 
for all $i$ on account of (1) $\Rightarrow$ (3) of proposition \ref{qtprop}, we have
$\jac \bar{H} \cdot \bar{H} = 0$.

On account of (3) $\Rightarrow$ (2) of proposition \ref{qtprop}, 
$\deg_t \bar{H}_5(x + t\bar{H}) = 0$. Since $x_5 \mid \bar{H}_5$,
$\deg_t (x + t\bar{H})_5 = 0$ as well. Hence $\bar{H}_5 = 0$ and 
$\deg_{x_5} H_5 \le r = \deg_{x_5} (H_1,\allowbreak H_2,H_3,H_4)$. 
By taking leading parts with respect to $x_5$, we see that for homogeneous 
and hence any $R \in \C[y_1,y_2,y_3,y_4]$, $R(H_1,H_2,H_3,\allowbreak H_4) = 0$ implies 
$R(\bar{H}_1,\bar{H}_2,\bar{H}_3,\bar{H}_4) = 0$. It follows from Proposition 1.2.9 of 
either \cite{arnobook} or \cite{homokema} that 
\begin{align*}
\rk \jac (x_5^{-r}\bar{H}) 
&= \trdeg_{\C} \C(x_5^{-r}\bar{H}_1,x_5^{-r}\bar{H}_2,x_5^{-r}\bar{H}_3,x_5^{-r}\bar{H}_4) \\
&\le \trdeg_{\C} \C(H_1,H_2,H_3,H_4) = \rk \jac (H_1, H_2, H_3, H_4) = 2
\end{align*}
Since $x_5^{-r} \bar{H}_5 = 0$ and $x_5^{-r} \bar{H}_i \in \C[x_1,x_2,x_3,x_4]$,
we deduce that $x + \bar{H}$ can be regarded as a quasi-translation in dimension four 
(over its first four coordinates). By (i) and (iii) of theorem \ref{hmgqt5th}, 
there are two independent linear forms $l_1$ and $l_2$ in 
$x_1,x_2,x_3,x_4$ such that $l_1(x_5^{-r}\bar{H}) = l_2(x_5^{-r}\bar{H}) = 0$.
So $l_1(\bar{H}) = l_2\bar{H}) = 0$.

Suppose that the leading parts of $p$ and $q$ with respect to $x_5$ are independent
and of the same degree with respect to $x_5$. Since $(\bar{H}_1,\bar{H}_2,\bar{H}_3,
\bar{H}_4)$ is the leading part of $(H_1, H_2, H_3, H_4)$ with respect to $x_5$, it 
follows that $(\bar{H}_1,\bar{H}_2,\bar{H}_3,\bar{H}_4) = h(\bar{p},\bar{q})$, where $\bar{p}$ and 
$\bar{q}$ are the leading parts of $p$ and $q$ respectively with respect to $x_5$. By assumption,
$\bar{p}$ and $\bar{q}$ are independent, so we can deduce from $l_1(\bar{H}) = l_2(\bar{H}) = 0$
that $l_1(h) = l_2(h) = 0$ and hence also $l_1(H) = l_2(H) = 0$. Contradiction,
thus the leading parts of $p$ and $q$ with respect to $x_5$ are dependent
or have different degrees with respect to $x_5$, as desired.

\item Take for $a$ the linear invariant of $x + H$, if it has any, and take $a = 1$ otherwise.
Let $f$ be a non-constant invariant of $x + H$ which can be expressed in four linear forms. 
We distinguish two cases.
\begin{itemize}
 
\item $f \in \C[x_1,x_2,x_3,x_4]$. \\ 
On account of (iii) above, we can obtain that $\deg_{x_5} p < \deg_{x_5} q$, 
namely by replacing $p$ and $q$ by linear combinations of $p$ and $q$, and
adapting $h$ accordingly.
If we replace $H$ by $T^{-1}H(Tx)$ and $(f,p,q)$ by $(f(Tx),p(Tx),q(Tx))$ for some 
$T \in \GL_5(\C)$ such that the last column of $T$ is equal to the fifth unit 
vector, the form of $H$ does not change and neither do $\deg_{x_5} f, \deg_{x_5} p$ and 
$\deg_{x_5} q$. By choosing $T$ appropriate, we can obtain 
$-\infty \le \deg_{y_2} h_1 < \deg_{y_2} h_2 < \deg_{y_2} h_3 < \deg_{y_2} h_4$.

On account of \eqref{fxtHeqv} in proposition \ref{qtprop}, $\jac f \cdot H = 0$. 
By looking at the leading coefficient with respect to $x_5$ in
$\jac f \cdot H$, we can successively deduce that $\parder{}{x_4} f = 0$,
$\parder{}{x_3} f = 0$, $\parder{}{x_2} f = 0$, and $H_1 = 0$. Hence $f$
is a polynomial over $\C$ in the invariant $x_1$ of $x + H$, and $f$ was
a polynomial over $\C$ in the invariant $(T^{-1})_1 x$ of $x + H$ before 
replacing $H$ by $T^{-1}H(Tx)$. Since $x + H$ does not have two independent
linear invariants, we see that $f \in \C[a]$.

\item $f \notin \C[x_1,x_2,x_3,x_4]$. \\
There exists a $T \in \GL_5(\C)$ such that $f(Tx) \in \C[x_1,x_2,x_3,x_5]$ and
last column of $T$ is equal to the fifth unit vector. Just as above, we replace $H$ 
by $T^{-1}H(Tx)$ and $(f,p,q)$ by $(f(Tx),p(Tx),q(Tx))$. So we may assume that 
$f \in \C[x_1,x_2,x_3,x_5]$. From \eqref{fxtHeqv}, it follows that $\jac f \cdot H = 0$ 
and that any homogeneous part of $f$ is an invariant of $x + H$ as well, so we 
may assume that $f$ is homogeneous.

Since $x + H$ has at most one linear invariant, we can use techniques in the 
proof of (i) of theorem \ref{hmgqt5th} to show that $\rk \jac (H_1,H_2,H_3) = 2$.
Hence the ideal $\bi := (R \in \C[y_1,y_2,y_3] \mid R(H_1,H_2,H_3) = 0)$ has height $1$, 
and since $\C[y]$ is a unique factorization domain, $\bi$ is principal. Say that
$R$ is a generator of $\bi$.

By looking at the leading homogeneous part of $f(x+H) = f$, we see that $f(H) = 0$.
Since $H_5$ is algebraically independent of $H_1,H_2,H_3$, we deduce that 
$R(x_1,x_2,x_3) \mid f$. From \eqref{fxtHeqv}, it follows $f(x + tH) = f$, from
which we can deduce that every factor of $f$ is an invariant of $x + H$. The case
$f \in \C[x_1,x_2,x_3,x_4]$ above tells us that $R(x_1,x_2,x_3) \in \C[a]$, and 
$f / R(x_1,x_2,x_3) \in \C[a]$ follows by induction on the degree of $f$. 

\end{itemize}
\item From (ii), it follows that $g \in \C[x_1,x_2,x_3,x_4]$. On account of (iv),
$g \in \C[a]$, where $a$ is as in (iv). If $H$ has no linear 
invariant, then $\deg a = 0$. Hence $g \in \C[a] = \C$ if $H$ has no linear invariant. 
\qedhere

\end{enumerate}
\end{proof}

\begin{corollary} \label{Fallbcor}
Assume $x + H$ is a homogeneous quasi-translation over $\C$ in dimension $5$ without
linear invariants. Then $\deg H \ge 12$. More precisely, there exists a
$T \in \GL_5(\C)$ such that $T^{-1} H(Tx)$ is of the form
$$
T^{-1} H(Tx) = g \cdot \big(h_1(p,q),h_2(p,q),h_3(p,q),h_4(p,q),f\big)
$$
where $h$ is homogeneous of degree at least $3$ and $(p,q)$ is homogeneous of degree at least $4$. 
\end{corollary}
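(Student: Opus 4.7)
The plan is to combine theorem \ref{Fallbth} with the case classification of section \ref{imqt}. I begin by using proposition \ref{irred} to reduce to the case $\gcd\{H_1,\ldots,H_5\}=1$; this step preserves the hypothesis of no linear invariants. By theorem \ref{hmgqt5th}, parts (i) and (iii) exclude $\rk \jac H \le 2$, while part (ii) gives $\rk \jac H \le 3$, so $\rk \jac H = 3$. If implication (5) of corollary \ref{Lpcor} held, the Zariski closure $W$ of the image of $H$ would be properly contained in a four-dimensional linear subspace, yielding a linear invariant; since none exists, (5) fails and $x+H$ belongs to `Fall b)' of \cite{gornoet}. By the characterization of that case together with proposition \ref{qtconj}, I can pick $T \in \GL_5(\C)$ such that $\tilde H := T^{-1}H(Tx)$ has $\tilde H_5$ algebraically independent over $\C$ of $\tilde H_1,\ldots,\tilde H_4$. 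Theorem \ref{Fallbth} then applies to $\tilde H$, producing the form $\tilde H = g\cdot(h_1(p,q),\ldots,h_4(p,q),f)$, and its part (v) forces $g\in\C$ because there are no linear invariants.

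I next show $\deg h \ge 3$ and $\deg p = \deg q$. Setting $d_h := \deg h$, if $d_h \le 2$ then $h_1,\ldots,h_4$ are four homogeneous polynomials of degree $d_h$ in two variables, spanning a $\C$-space of dimension at most $d_h + 1 \le 3$, so they satisfy a nontrivial $\C$-linear relation $\sum c_i h_i = 0$. Substituting $(p,q)$ for $(y_1,y_2)$ and multiplying by $g$ gives $\sum c_i \tilde H_i = 0$, a linear invariant of $x+\tilde H$ and, after transport through $T$, of $x+H$, contradicting the hypothesis. Similarly, if $\deg p \neq \deg q$, the $x$-homogeneity of each $h_i(p,q)$ pins down a single exponent pair $(a, d_h - a)$ for which the monomial $y_1^a y_2^{d_h-a}$ is the only one that may appear in each $h_i$; consequently the $h_i$ are scalar multiples of one monomial, spanning only a one-dimensional subspace of $\C[y_1,y_2]$ and producing three linear invariants as above, again a contradiction. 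Hence $\deg h \ge 3$ and I may write $d := \deg p = \deg q$.

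It remains to establish $d \ge 4$, which I expect to be the main obstacle. Part (iv) of theorem \ref{Fallbth} forces every invariant of $x + \tilde H$ expressible as a polynomial in four linear forms to lie in $\C[a]$; the absence of linear invariants gives $\deg a = 0$, so $a$ is a nonzero constant, and neither $p$ nor $q$ can be expressed as a polynomial in four linear forms. This immediately rules out $d = 1$. To exclude $d = 2$ and $d = 3$, I would first arrange $\deg_{x_5} p < \deg_{x_5} q$ via part (iii) of theorem \ref{Fallbth} and then revisit the leading-$x_5$-part construction from its proof: the four-dimensional auxiliary quasi-translation built there, combined with the restrictions on the leading parts of $p$ and $q$ and the rank calculation $\rk \jac(x_5^{-r}\bar H) \le 2$, should force either a forbidden linear relation among the $h_i$'s (contradicting $\deg h \ge 3$) or an expression of $p$ or $q$ as a polynomial in fewer than five linear forms (contradicting (iv)). Granting $d \ge 4$, we conclude $\deg H = \deg \tilde H = d_h \cdot d \ge 3 \cdot 4 = 12$, establishing both claims of the corollary.
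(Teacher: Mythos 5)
Your reduction to the form $g\cdot\big(h_1(p,q),\ldots,h_4(p,q),f\big)$ with $g\in\C$, and your arguments that $\deg h\ge 3$ and $\deg p=\deg q$, follow the paper's route and are sound. The genuine gap is the step $\deg(p,q)\ge 4$, which you explicitly leave as a plan (``I would \ldots'', ``should force \ldots'', ``Granting $d\ge 4$''). Ruling out $d=1$ via (iv) of theorem \ref{Fallbth} is fine, but your proposed route for excluding $d=2$ and $d=3$ --- revisiting the leading-$x_5$-part construction from the proof of (iii) of theorem \ref{Fallbth} --- is not carried out, and it is not clear that it can close these cases: that construction only constrains the leading parts of $p$ and $q$ with respect to $x_5$, and by itself it does not forbid, say, $\deg p=\deg q=3$ with $\deg_{x_5}p=1$ and $\deg_{x_5}q=2$.

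The paper's actual argument uses an ingredient you never invoke: after the normalization, $e_5$ is a projective image apex of $H$, so the Zariski closure $W$ of the image of $H$ is stable under translation by $e_5$. Since $q$ is an invariant, taking the leading coefficient with respect to $t$ of $q(x+tH)=q(x)$ gives $q(H)=0$, and the apex property upgrades this to $q(H_1,H_2,H_3,H_4,H_5+t)=0$ identically in $t$. The coefficient of $t^{\deg q}$ then kills the $x_5^{\deg q}$-term of $q$, so $\deg_{x_5}q\le\deg q-1$; and in case of equality the coefficient of $t^{\deg q-1}$ produces a linear form vanishing on $(H_1,\ldots,H_4)$, i.e.\ a linear invariant, which is excluded. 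Hence $\deg_{x_5}q\le\deg q-2$, and combined with $0<\deg_{x_5}p<\deg_{x_5}q$ (obtained from (iii) and (iv) of theorem \ref{Fallbth}, exactly as you set up) this yields $\deg q\ge\deg_{x_5}q+2\ge 4$. Without some such upper bound on $\deg_{x_5}q$ in terms of $\deg q$, your chain of inequalities does not terminate, so the bound $\deg H\ge 12$ is not established.
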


\begin{proof}
On account of proposition \ref{irred}, we may assume that 
$\gcd\{H_1,H_2,H_3,H_4,\allowbreak H_5\} = 1$. From (ii) of theorem \ref{hmgqt5th},
it follows that $\dim V(H) \le 3$. From (i) and (iii) of theorem \ref{hmgqt5th}, it follows that
$\rk \jac H \ge 3$. Using (2) $\Rightarrow$ (3) of proposition \ref{hmgprop}, we can deduce
that $\dim V(H) = \rk \jac H = 3$. From (1) $\Rightarrow$ (5) of corollary \ref{Lpcor}, we obtain that
$H$ has a nonzero (projective) image apex. From proposition \ref{qtconj}, it follows that we may 
assume that $e_5$ is a (projective) image apex.

From (i), (ii) and (v) of theorem \ref{Fallbth}, it follows that there are invariants $p$ and $q$ of 
$x + H$, such that $H$ is of the form 
\begin{equation} \label{Hpiaform}
H = \big(h_1(p,q),h_2(p,q),h_3(p,q),h_4(p,q),H_5\big) 
\end{equation}
such that $h$ and $(p,q)$ are homogeneous. Furthermore, it follows from (iii) and (iv) 
of theorem \ref{Fallbth} that we may assume that $\deg_{x_5} q > \deg_{x_5} p$
and $\deg_{x_5} p > 0$ respectively.

On account of \eqref{fxtHeqv} in proposition \ref{qtprop}, 
$q(x+tH) = q(x)$, and looking at the leading coefficient with respect to $t$ gives $q(H) = 0$. 
Since $e_5$ is a projective apex of $H$, we even have $q(H_1,H_2,H_3,H_4,H_5+t) = 0$. 
Hence $\deg_{x_5} q \le \deg q-1$ and in case of equality, looking at the leading 
coefficient with respect to $t$ in $q(x_1,x_2,x_3,x_4,t)$ gives a 
linear form $l_1$ such that $l_1(H_1, H_2, H_3, H_4)$, which contradicts that
$x + H$ has no linear invariants. Thus $\deg_{x_5} q \le \deg q-2$. If we combine this with
the conclusion of the previous paragraph, then we obtain
\begin{equation} \label{pqdeg}
0 < \deg_{x_5} p < \deg_{x_5} q \le \deg q - 2
\end{equation}
So $\deg (p,q) \ge \deg q \ge \deg_{x_5} q + 2 \ge 4$.

If $\deg h < 3$, then there exists a linear form
$l_2 \in \C[x_1,x_2,x_3,x_4]$ such that $l_2(h) = 0$ and hence also 
$l_2(H_1,H_2,H_3,H_4) = 0$, which contradicts that
$x + H$ has no linear invariants. Hence $\deg h \ge 3$. 
\end{proof}

The following theorem has been proved in \cite{liu} as well. The proof
that is given below is somewhat less computational than that in \cite{liu}.

\begin{theorem} 
Assume $x + H$ is a homogeneous quasi-translation over $\C$ in dimension $5$ without
linear invariants. Then $\deg H \ge 15$. More precisely, $\deg (p,q) \ge 5$, where
$p$ and $q$ are as in corollary {\upshape\ref{Fallbcor}}.
\end{theorem}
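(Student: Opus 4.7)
Argue by contradiction and suppose $\deg(p,q)=4$, so $\deg q=4$; combined with \eqref{pqdeg} from the proof of corollary \ref{Fallbcor}, this forces $\deg_{x_5}q=2$ and $\deg_{x_5}p=1$. Write
\[
p=\alpha x_5+\beta,\qquad q=\gamma x_5^2+\delta x_5+\epsilon
\]
with $\alpha,\beta,\gamma,\delta,\epsilon\in\C[x_1,\ldots,x_4]$ homogeneous of respective degrees $\deg p-1$, $\deg p$, $2$, $3$, $4$. The projective image apex $e_5$ of $H$ (established in the proof of corollary \ref{Fallbcor}) gives that $p(H_1,\ldots,H_4,H_5+t)=q(H_1,\ldots,H_4,H_5+t)=0$ for every $t\in\C$, and matching coefficients of powers of $t$ yields $\alpha(H|_4)=\beta(H|_4)=\gamma(H|_4)=\delta(H|_4)=\epsilon(H|_4)=0$, where $H|_4=(H_1,\ldots,H_4)$. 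Hence all five coefficients lie in the prime ideal
\[
I:=\ker\bigl(\C[y_1,\ldots,y_4]\to\C[x]\colon y_i\mapsto H_i\bigr),
\]
which by theorem \ref{Fallbth}(i) and the algebraic independence of $p,q$ is the defining ideal of the image of $h\colon\C^2\to\C^4$.

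The no-linear-invariants hypothesis translates to $I_1=\{0\}$: any nonzero $\ell\in I_1$ would yield the linear invariant $\ell(x_1,\ldots,x_4)$ via \eqref{fxtHeqv}. Since $\alpha\ne 0$, this immediately forces $\deg\alpha=\deg p-1\ge 2$, so $\deg p\ge 3$. Moreover, $\gamma\in I_2\setminus\{0\}$ strongly restricts the image curve $C:=\overline{\operatorname{Im}(h)}/\C^{*}\subseteq\PP^3$: being rational, nondegenerate and contained in a quadric, $C$ must have degree at most $4$, and together with the homogeneity relation $\deg H_i=a_i\deg p+b_i\deg q$ for the monomial components of $h$ and the rank condition $\rk\jac h=2$, this severely constrains the possible parametrizations and pins down $\deg p=\deg q=4$.

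The final contradiction is obtained by combining the low-degree elements $\alpha$ and $\gamma$ of $I$ with the higher-order invariance identities obtained from Taylor-expanding $p(x+tH)=p$ and $q(x+tH)=q$ in $t$, the quasi-translation identity $\jac f\cdot H=0$, and the bound $\deg_{x_5}H_5\le\max_i\deg_{x_5}H_i$ from theorem \ref{Fallbth}(iii). Matching leading coefficients in $x_5$ across these identities produces an extra linear combination of $H_1,\ldots,H_4$ that vanishes---equivalently, a nonzero linear form in $I$, contradicting $I_1=\{0\}$.

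The principal obstacle is this last step: the extraction of a linear form in $I$ from the conjunction of the ideal-theoretic data $\alpha,\gamma\in I$ and the full invariance identities. Liu's proof in \cite{liu} performs the extraction through substantial explicit coefficient-tracking; the less computational alternative here is to exploit the tight interplay between the low-degree part of $I$ (bounded by the low degree of $C$) and the $x_5$-filtration of $p,q$, forcing the $I_1$-element conceptually rather than by direct computation.
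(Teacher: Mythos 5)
There is a genuine gap: the proposal carries out only the opening reduction and then asserts, rather than proves, the contradiction. Your first paragraph matches the paper's starting point (assume $\deg q\le 4$; then \eqref{pqdeg} forces $\deg_{x_5}p=1$, $\deg_{x_5}q=2$, $\deg q=4$, and the coefficients of $p$ and $q$ in $x_5$ vanish on $(H_1,\ldots,H_4)$, with the linear ones excluded by the no-linear-invariant hypothesis). But everything after that is a description of a strategy, not an argument. The paper's proof needs several substantial intermediate results that do not appear in your proposal at all: (a) the identity $\deg_{x_4,x_5}f=\deg_{x_5}f$ for every invariant $f$, proved by a delicate term-by-term degree estimate on $f(x+tH)$; (b) an analysis of the leading parts $\bar H$ of $H$ with respect to $(x_4,x_5)$, showing $\jac\bar H\cdot\bar H=0$ and then that $\bar H_5$ is linearly dependent on $\bar H_4$, which uses the classification of nilpotent Jacobians in dimension two from \cite[Th.\@ 7.2.25]{arnobook}; (c) the observation that $x+qH$ is again a quasi-translation with traceless Jacobian, yielding $\deg_{x_4}H_5\le\deg h$; and (d) a final explicit computation applying $(\parder{}{x_4})^{\deg h+1}$ to $\jac p\cdot H=0$ to produce the contradiction with \eqref{x4x5pq}. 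Your closing paragraph explicitly concedes that the ``extraction of a linear form in $I$'' is the principal obstacle and is left to be done ``conceptually''; that is precisely the content of the theorem, so the proof is not there.

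A secondary problem: the claim that a rational, nondegenerate curve in $\PP^3$ contained in a quadric must have degree at most $4$ is false (curves of type $(1,d-1)$ on a smooth quadric are rational and nondegenerate of arbitrary degree $d$), and the conclusion you draw from it ($\deg p=\deg q=4$) is in any case already forced by the homogeneity of $(p,q)$ together with the hypothesis $\deg(p,q)=4$, so this step contributes nothing. Note also that the contradiction the paper ultimately reaches is not ``a nonzero linear form in $I_1$'' but a degree mismatch in $x_5$; there is no reason to expect the argument to close by producing a linear relation among $H_1,\ldots,H_4$.
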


\begin{proof}
Just like in the proof of corollary \ref{Fallbcor}, we may assume that
$H$ is as in \eqref{Hpiaform} such that $h$ is homogeneous of degree at least $3$ and 
$(p,q)$ is homogeneous such that \eqref{pqdeg} is satisfied. 
If $\deg q \ge 5$, then $\deg H \ge 5 \deg h \ge 15$ indeed, because $(p,q)$ is homogeneous.
Hence assume that $\deg q \le 4$. We shall derive a contradiction.
\begin{enumerate}[(i)]

\item From (iv) of theorem \ref{Fallbth}, it follows that $\deg_{x_4} p \ge 1$ and 
$\deg_{x_4} q \ge 1$.
From \eqref{pqdeg}, we deduce that $\deg_{x_5} p = 1$, $\deg_{x_5} q = 2$ and $\deg q = 4$.
Assume without loss of generality that $\deg_{y_2} h_4 > \deg_{y_2} h_3 > \deg_{y_2} h_2 
> \deg_{y_2} h_1$. Then $\deg_{x_5} H_4 > \deg_{x_5} H_3 > \deg_{x_5} H_2 > \deg_{x_5} H_1$.

Let $r$ be the leading coefficient with respect to $x_5$ of $q$.
On account of \eqref{fxtHeqv} in proposition \ref{qtprop}, $\jac q \cdot H = 0$.
By looking at the leading coefficient with respect to $x_5$ of $\jac q \cdot
H = 0$, we deduce from (iii) of theorem \ref{Fallbth} that $r \in \C[x_1,x_2,x_3]$. 
Since $q(H_1,H_2,H_3,H_4,t) = 0$,
$r(H_1,H_2,H_3) = 0$ as well. By looking at the leading coefficient
with respect to $x_5$ in $r(H_1,H_2,H_3)$, we see that the coefficients of
$x_3^2$ and $x_2x_3$ of $r$ are zero. Hence $r$ is of the form
$r = (\lambda_1 x_1 + \lambda_2 x_2 + \lambda_3 x_3) x_1 - \lambda_4^2 x_2^2$, where
$\lambda_i \in \C$ for all $i$.
Since $r$ is irreducible, we have $\lambda_3 \lambda_4 \ne 0$. 

\item We show that for invariants $f$ of $x + H$, we have $\deg_{x_4,x_5} f = 
\deg_{x_5} f$. Let $f$ be an invariant of $x + H$. From \eqref{fxtHeqv} in
proposition \ref{qtprop}, it follows that $f(x + tH) = 0$.
Let $\bar{f}$ be the leading part of $f$ with respect to
$(x_4,x_5)$ and suppose that $\deg_{x_4,x_5} f > \deg_{x_5} f$. Then $x_4 \mid \bar{f}$, say
that $x_4^v \mid \bar{f}$ and $x_4^{v+1} \nmid \bar{f}$. On account of 
(iii) of theorem \ref{Fallbth}, $\deg_{x_5} H_4 \ge \deg_{x_5} H_5$ and $\deg_{x_5} H_4 > 
\deg_{x_5} H_i$ for all $i \le 3$. So $\deg_{x_5} H_4 \ge \deg_{x_5} H_i -
\deg_{x_5} x_i - 1$ for all $i \ne 4$.

If we change a factor $x_i$ in a product into $t H_i$, the degree with respect to $x_5$
of that product will increase $\deg_{x_5} tH_4 - \deg_{x_5} x_4 = \deg_{x_5} tH_4$ if $i = 4$ 
and $\deg_{x_5} t H_i - \deg_{x_5} x_i \le \deg_{x_5} tH_4 - 1$ if $i \ne 4$.
Having to do such a change $v$ times, starting with a term $u \in \C[x]$, we deduce from 
$\deg_{x_5} u = \deg_{x_4,x_5} u - \deg_{x_4} u$ for terms $u \in \C[x]$ that for any term 
and hence any polynomial $u \in \C[x]$, the coefficient of $t^v$ of $u(x + tH)$ has degree at most 
\begin{align*}
b(u) &:=  \deg_{x_4,x_5} u - \deg_{x_4} u  + \deg_{x_4} u \cdot \deg_{x_5} tH_4 + {}\\
&\qquad (v - \deg_{x_4} u) \cdot \big(\deg_{x_5} tH_4 - 1\big) + 
\\
&\hphantom{:}= v\cdot\big(\deg_{x_5} tH_4 - 1\big) + \deg_{x_4,x_5} u
\end{align*}
with respect to $x_5$. Since $b(u)$ is affinely linear in $\deg_{x_4,x_5} u$ as a function on
terms $u \in \C[x]$, the part of degree $b(f)$ with respect to $x_5$ of the coefficient of $t^v$ of 
$f(x + tH)$ is equal to that of $\bar{f}(x + tH)$. 

The part of degree $v$ with respect to $t$ of $\bar{f}(x_1,x_2,x_3,x_4+t H_4,x_5)$ equals
$(tH_4)^v \frac{1}{v!} \parder[v]{}{x_4} \bar{f}$. By definition of $v$, 
\begin{align*}
\deg_{x_5} \Big((tH_4)^v \frac{1}{v!} \parder[v]{}{x_4} \bar{f}\Big)
&= v\cdot \deg_{x_5} tH_4 + \deg_{x_5} \parder[v]{}{x_4} \bar{f} \\
&= v\cdot \deg_{x_5} tH_4 - v + \deg_{x_4,x_5} \bar{f} \\ 
&= b(\bar{f}) = b(f)
\end{align*}
so the part of degree $b(f)$ with respect to $x_5$ of the coefficient of $t^v$ of 
$\bar{f}(x_1,x_2,x_3,x_4+t H_4,x_5)$ is nonzero. Furthermore, we can deduce from
$\deg_{x_5} tH_4 - \deg_{x_5} x_4 = \deg_{x_5} tH_4 > \deg_{x_5} t H_i - \deg_{x_5} x_i$ for all $i \neq 4$
and $\deg_{x_5} \parder[v]{}{x_4} \bar{f} = \deg_{x_5} \bar{f}$ that the degree with respect to 
$x_5$ of $\bar{f}(x + tH) - \bar{f}(x_1,x_2,x_3,x_4+t H_4,x_5)$ is less than $b(f)$. Hence
$$
\deg_{x_5} f(x + tH) = \deg_{x_5} \bar{f}(x + tH) = \deg_{x_5} \bar{f}(x_1,x_2,x_3,x_4+t H_4,x_5) = b(f)
$$
But the coefficient of $t^v$ of $f(x + tH)$ is zero, so $v = 0$. Hence $\deg_{x_4,x_5} f = 
\deg_{x_5} f$ for invariants $f$ of $x + H$.

Since $p$ and $q$ are invariants of $x + H$, and $\deg_{x_5} p = 1$ and
$\deg_{x_5} q = 2$, we have $\deg_{x_4,x_5} p = 1$ and $\deg_{x_4,x_5} q = 2$.

\item Let $\bar{H}_i$ be the part of $H_i$ that has degree $2\deg h - 1$ with respect 
to $(x_4,x_5)$, for $i = 1,2,3$, and $\bar{H}_j$ the part of $H_j$ that has degree 
$2 \deg h$ with respect to $(x_4,x_5)$, for $j = 4,5$. Then the part of degree
$4\deg h - 2$ with respect to $(x_4,x_5)$ of 
$\jac (H_1,H_2,H_3,H_4) \cdot H$ equals $\jac (\bar{H}_1,\bar{H}_2,
\bar{H}_3,\bar{H}_4) \cdot \bar{H}$ and the part of degree $4\deg h - 1$ with respect to 
$(x_4,x_5)$ of $\jac H_5 \cdot H$ equals $\jac H_5 \cdot H$. Since $\jac H \cdot H = 0$ on 
account of (1) $\Rightarrow$ (3) of proposition \ref{qtprop}, we have
$\jac \bar{H} \cdot \bar{H} = 0$.

On account of (iii), $\deg_{x_5} H_i = \deg_{x_4,x_5} H_i$ for all $i$.
Consequently, $\bar{H}_1 = \bar{H}_2 = 0$ and $\bar{H}_3$ and $(\bar{H}_4,\bar{H}_5)$ 
are homogeneous with respect to $(x_4,x_5)$. We shall show that $\bar{H}_5$
is linearly dependent over $\C$ of $\bar{H}_4$, distinguishing the cases
$\bar{H}_3 = 0$ and $\bar{H}_3 \ne 0$.

Assume first that $\bar{H}_3 = 0$. From (\ref{qtnilp}), it follows that
$\jac \bar{H}$ is nilpotent. Since $\bar{H}_1 = \bar{H}_2 = \bar{H}_3 = 0$, 
$\jac_{x_4,x_5} (\bar{H}_4, \bar{H}_5)$ is nilpotent as well. From 
\cite[Th.\@ 7.2.25]{arnobook} (see also \cite{esshubnc}), we obtain that
$$
(\bar{H}_4, \bar{H}_5) = (bc (a x_4 - b x_5)^{2 \deg h}, ac (a x_4 - b x_5)^{2 \deg h})
$$
Since $\deg (\bar{H}_4, \bar{H}_5) = 2 \cdot 2 \deg h$, this is only possible
if $a$ and $b$ are constant. Hence $ \bar{H}_5 = b^{-1} a \bar{H}_4$ is 
linearly dependent over $\C$ of $\bar{H}_4$.

Assume next that $\bar{H}_3 \ne 0$. Let $\bar{q}$ be the leading and quadratic part of 
$q$ with respect to $(x_4,x_5)$. Then $\bar{q} \mid \bar{H}_4$, so 
$\deg_t \bar{q}(x + t\bar{H}) \le \deg_t \bar{H}_4(x + t\bar{H}) = 0$ on account of
(3) $\Rightarrow$ (2) of proposition \ref{qtprop}. Since $\lambda_3 \ne 0$ and 
the leading term with respect to $x_5$ of $q$ is divisible by $r$, 
we have $\deg_{x_3,x_4,x_5} \bar{q} = \deg_{x_4,x_5} \bar{q} + 1 = 3$.
The coefficient of $t^3$ in $\bar{q}(x+t\bar{H})$ is of the form $x_1\bar{H}_3
s(\bar{H}_4,\bar{H}_5)$, where $s$ is a quadratic form, which decomposes into linear factors.
Since $\bar{H}_3 \ne 0$, we deduce that $s(\bar{H}_4,\bar{H}_5) = 0$ and that 
$\bar{H}_5$ is linearly dependent over $\C$ of $\bar{H}_4$.

\item By way of a linear conjugation of $H$ and the same linear conjugation of 
$\bar{H}$, we can obtain $\bar{H}_5 = 0$. If $\bar{H}_3 = 0$, then one can compute that
$\deg_{x_5} \jac p \cdot H = \deg_{x_5} \parder{}{x_4} p \cdot \bar{H}_4$, which
gives a contradiction to \eqref{fxtHeqv} in proposition \ref{qtprop}. Hence $\bar{H}_3 \ne 0$. 
From \eqref{qtnilp}, it follows that $\jac \bar{H}$ is nilpotent. Since $\bar{H}_1 = \bar{H}_2 = 
\bar{H}_5 = 0$, $\jac_{x_3,x_4} (\bar{H}_3, \bar{H}_4)$ is nilpotent as well. From 
\cite[Th.\@ 7.2.25]{arnobook} (see also \cite{esshubnc}), we obtain that
$$
(\bar{H}_3, \bar{H}_4) = \big(b g(a x_3 - b x_4) + c, ag (a x_3 - b x_4) + d\big)
$$
for certain $a,b,c \in \C[x_1,x_2,x_5]$. Hence $\deg_{x_4} \bar{H}_3 = \deg_{x_4} \bar{H}_4$. 
Since $\deg_{x_4} p \ge 1$ and $\deg_{x_4,x_5} p = 1$, this is only possible if $x_5 \mid a$ and
$\deg_{x_4} p = \deg_{x_4} \bar{q} = 1$. Since $\bar{q}$ be the leading and quadratic part of 
$q$ with respect to $(x_4,x_5)$, we deduce from $\deg_{x_4} \bar{q} = 1$ that $q$
has a term which is divisible by $x_4 x_5$, but no term which is divisible by $x_4^2$.
So $\deg_{x_4} p = \deg_{x_4} q = 1$ and the right hand side of
\begin{equation} \label{x4x5pq}
\deg_{x_5} \Big( \parder{}{x_4} p \Big) = 0 \qquad \mbox{and} 
\qquad \deg_{x_5} \Big( \parder{}{x_4} q \Big) = 1 
\end{equation}
follows. The left hand side of \eqref{x4x5pq} follows from $\deg_{x_4,x_5} p = 1$.

\item Since $q$ is an invariant of $x + H$, we obtain from proposition \ref{qtprop}
that $q(x+tH)\cdot H(x+tH) = q \cdot H$, and substituting $t = tq$ gives by way of 
(2) $\Rightarrow$ (1) of proposition \ref{qtprop} that
$x + qH$ is a quasi-translation. Since the leading coefficient with respect
to $x_4$ of $q$ and hence also $q H_5$ is contained in $\C[x_1,x_2,x_3,x_5] \setminus 
\C[x_1,x_2,x_3]$, we deduce that $\deg_{x_4} q H_5 = \deg_{x_4} \parder{}{x_5} (q H_5)$.
On account of \eqref{qtnilp} in proposition \ref{qtprop}, we have $\tr \jac qH = 0$, so
\begin{align*}
\deg_{x_4} H_5 &= \deg_{x_4} (qH_5) - \deg_{x_4} q \\
&= \deg_{x_4} {\textstyle\parder{}{x_5}} (q H_5) - \deg_{x_4} q \\
&\le \deg_{x_4} (qH_1,qH_2,qH_3,qH_4) - \deg_{x_4} q \\
&= \deg_{x_4} (H_1,H_2,H_3,H_4) \\
&= \deg h = 3
\end{align*}

Take $k$ minimal such that the leading coefficient with respect to $x_4$ of $p$ is contained in 
$\C[x_1,x_2,\ldots,x_k]$. Since $\deg_{x_4} H_5 \le \deg h$ and $\deg_{x_4,x_5} p = 1$, we have 
$(\parder{}{x_4})^{\deg h + 1} (H_5 \parder{}{x_5} p) = 0$. From \eqref{fxtHeqv} it follows that 
$\jac p \cdot H = 0$, so that we can deduce from $\deg_{x_4} p = \deg_{x_4} q = 1$ that
\begin{align*}
0 - 0 &= \Big(\parder{}{x_4}\Big)^{\deg h + 1} \big(\jac p \cdot H\big) -
\Big(\parder{}{x_4}\Big)^{\deg h + 1} \Big( H_5 \parder{}{x_5} p \Big) \\
&= (\deg h + 1)! \cdot \sum_{i=1}^4 h\Big(\parder{}{x_4} p, \parder{}{x_4} q\Big) 
   \parder{}{x_4} \parder{}{x_i} p \\
&= (\deg h + 1)! \cdot \sum_{i=1}^k h\Big(\parder{}{x_4} p, \parder{}{x_4} q\Big)
   \parder{}{x_4} \parder{}{x_i} p
\end{align*}
But the right hand side has degree $\deg_{y_2} h_k$ with respect to $x_5$ on account of \eqref{x4x5pq}. 
Contradiction, so $\deg q \ge 5$. \qedhere

\end{enumerate}
\end{proof}

\mathversion{bold}
\section{The kernel of the map $H$ of quasi-translations $x + H$} \label{kerqt}
\mathversion{normal}

In the beginning of the proof of theorem \ref{Fallbth}, we have shown that for 
quasi-translations $x + H$ which belong to case b) in \cite{gornoet}, 
$\dim V(H) = \rk \jac H = 3$. Hence the Zariski closure of the image 
of $H$ is an irreducible component of $V(H)$ for such quasi-translations. 
Corollary \ref{hmgrk3cor} in this section subsequently gives us several results
about quasi-translations which belong to case b) in \cite{gornoet}, among which
a result about such quasi-translations without linear invariants.

First we prove some geometric results about quasi-translations to obtain theorem \ref{XW}.
Next, we use theorem \ref{XW} to prove corollary \ref{hmgrk3cor}.
At last, we use theorem \ref{XW} to prove corollary \ref{hmgrk2cor}, which gives us the 
case where $n \ge 6$ of (iii) of theorem \ref{hmgqt5th}.

\begin{lemma} \label{projlem}
Assume $x + H$ is a quasi-translation in dimension $n$ over $\C$. 
Let $X \subseteq \C^n$ be an irreducible variety such that $H|_X$ is not the zero map, 
so that the Zariski closure $Y$ of the image of $H|_X$ is nonzero.

Then for each $c \in X$, there exists a nonzero $p \in Y$ such that $g(c + tp) = g(c)$, 
for every invariant $g$ of $x + H$, where $t$ is a new indeterminate.
\end{lemma}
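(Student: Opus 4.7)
The plan is to dispatch the generic case $H(c) \neq 0$ by hand and extend to the exceptional locus $H(c) = 0$ via the Euclidean-completeness argument of lemma \ref{completeness}. Set $X_0 := \{c \in X : H(c) \neq 0\}$, a Zariski open subset of $X$ which is nonempty (by hypothesis on $H|_X$) and hence dense (by irreducibility of $X$).

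For $c \in X_0$, the choice $p := H(c)$ works directly: it is nonzero and lies in $Y$ by construction, while \eqref{fxtHeqv} of proposition \ref{qtprop} supplies the polynomial identity $g(x + tH) = g(x)$ for every invariant $g$, so substituting $x = c$ yields $g(c + tp) = g(c)$.

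To extend the conclusion to all $c \in X$, I would introduce the set
$$\tilde Z := \left\{(c, p, b) \in X \times \C^n \times \C^n : p \in Y,\ \overline{b} = p,\ b\tp p = 1,\ g(c + tp) = g(c) \text{ for every invariant } g\right\}.$$
The constraints $\overline{b} = p$ and $b\tp p = 1$ force $\|p\|^2 = 1$, pinning $(p, b)$ to the sphere of radius $\sqrt{2}$ in $\C^{2n}$, as required to invoke lemma \ref{completeness}. The remaining conditions are polynomial (the $\overline{b} = p$ condition is real-closed), so $\tilde Z$ is Euclidean closed. Using the scaling invariance of $g(c + tp) = g(c)$ in $p$ (substitute $t \mapsto s/\lambda$), for each $c \in X_0$ one rescales $H(c)$ to unit norm and exhibits a point of $\tilde Z$ over $c$; hence the projection of $\tilde Z$ onto the first $n$ coordinates contains the Zariski open dense $X_0$. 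Lemma \ref{completeness} then forces this projection to be all of $X$, producing, for every $c \in X$, a nonzero $p \in Y$ with $g(c + tp) = g(c)$ for each invariant $g$.

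The main subtlety will be to ensure that the unit-normalized direction $H(c)/\|H(c)\|$ actually lies in $Y$, so that the triple $(c, H(c)/\|H(c)\|, \overline{H(c)/\|H(c)\|})$ does belong to $\tilde Z$. This is automatic whenever $Y$ is closed under $\C^{\times}$-scaling, which in particular holds when $H$ is homogeneous of positive degree — the setting in which the lemma is subsequently applied — but requires attention in general. In the general case one augments $\tilde Z$ with an auxiliary coordinate $q \in Y$ constrained to be collinear with $p$, and then uses the scaling invariance once more to convert the $p$ delivered by lemma \ref{completeness} into an honest element of $Y$ on the same line through the origin.
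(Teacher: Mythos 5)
Your argument is the paper's own, step for step: the generic locus $H(c)\neq 0$ is handled with $p=H(c)$ and \eqref{fxtHeqv}, and the extension to all of $X$ uses exactly the same device of normalizing $p$ to the unit sphere, adjoining the complex conjugate $b$, and invoking lemma \ref{completeness}. The subtlety you flag --- whether $p\in Y$ survives the rescaling to unit norm --- is genuine but is passed over silently in the paper (whose set $Z$ does not even record the constraint $p\in Y$); since the lemma is only ever applied with $H$ homogeneous to an irreducible component $X$ of the cone $H^{-1}(L)$, so that $Y$ is a cone, your main argument covers all uses in the paper, and only your sketched repair for non-conical $Y$ remains unverified (the auxiliary coordinate $q$ ranges over the non-compact set $Y$, so it cannot sit among the sphere-constrained coordinates that lemma \ref{completeness} requires, and if it sits among the projected coordinates it is not clear which irreducible variety should play the role of $X$ there).
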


\begin{proof}
Let $G$ be the set of invariants of $x + H$.
We first prove this lemma for all $c$ in a nonempty open subset of $X$.
The generic property of $c$ that we assume is that $H(c) \ne 0$. Since
$H|_X$ is not the zero map, we are considering a nonempty open subset of $X$
indeed. From \eqref{fxtHeqv} in proposition \ref{qtprop}, it follows that $g(x + tH) = g(x)$
for every invariant $g$ of $x + H$. Hence $g(c + t p) = g(c)$ for every $g \in G$, 
if we take $p = H(c) \ne 0$.

In the general case, consider the sets
$$
Z := \{(c,p,b) \in X \times (\C^n)^2 \mid 
g(c + tp) = g(c) \mbox{ for every $g \in G$ and } b\tp p = 1 \}
$$
and 
$$
\tilde{Z} := \{(c,p,b) \in Z \mid b \mbox{ is the complex conjugate of } p\}
$$
By applying proper substitutions in $t$, we see that the image $\tilde{X}$ of the projection 
of $\tilde{Z}$ onto its first $n$ coordinates is equal to that of $Z$. Since $\tilde{X}$ 
contains an open subset of $X$, it follows from lemma \ref{completeness} that $\tilde{X} = X$, 
which gives the desired result.
\end{proof}

\begin{lemma} \label{Vimlm}
Assume $x + H$ is a quasi-translation in dimension $n$ over $\C$. Let $W$ be the Zariski closure
of the image of $H$. Then for any linear subspace $L$ of $\C^n$, the assertions
\begin{enumerate}[\upshape (1)]

\item $\dim L > \dim V(H)$;

\item every irreducible component of $H^{-1}(L)$ has dimension greater than
$\dim V(H)$;

\item for each $c \in V(H)$, there exists a nonzero $p \in L \cap W$ such that 
$H(c + tp) = 0$;

\end{enumerate}
satisfy {\upshape (1)} $\Rightarrow$ {\upshape (2)} $\Rightarrow$  {\upshape (3)}.
\end{lemma}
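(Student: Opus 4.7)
The plan for (1) $\Rightarrow$ (2) is the standard affine dimension theorem. The subspace $L$, having codimension $n-\dim L$ in $\C^n$, is cut out by $n-\dim L$ linear forms $\ell_1,\ldots,\ell_{n-\dim L}$; composing with $H$ yields $n-\dim L$ polynomials $\ell_j \circ H$ whose common zero set is exactly $H^{-1}(L)$. By Krull's Hauptidealsatz, every irreducible component of this zero set has dimension at least $n-(n-\dim L)=\dim L$, which exceeds $\dim V(H)$ under hypothesis (1).

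For (2) $\Rightarrow$ (3), fix $c \in V(H)$. Because $0 \in L$, we have $c \in H^{-1}(L)$, so $c$ lies in some irreducible component $X$ of $H^{-1}(L)$. Hypothesis (2) gives $\dim X > \dim V(H)$, so $X$ cannot be contained in $V(H)$; equivalently, $H|_X$ is not identically zero. Let $Y$ denote the Zariski closure of $H(X)$, so that $Y$ is a nonzero (irreducible) variety contained in both $L$ (since $X \subseteq H^{-1}(L)$) and $W$ (since $H(X) \subseteq W$).

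The crucial observation is that each coordinate $H_i$ is itself an invariant of $x + H$: by (1) $\Rightarrow$ (2) of proposition \ref{qtprop} we have $H(x+H) = H$, i.e.\ $H_i(x+H) = H_i(x)$ for every $i$. Applying lemma \ref{projlem} to the irreducible variety $X$ (whose image under $H$ has Zariski closure $Y$) and the point $c$, we obtain a nonzero $p \in Y$ for which $g(c + tp) = g(c)$ holds for every invariant $g$. Taking $g = H_i$ and using $H_i(c) = 0$, we conclude $H_i(c + tp) = 0$ for each $i$; that is, $H(c+tp) = 0$. Together with $p \in Y \subseteq L \cap W$, this delivers (3).

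The only mildly delicate step is the identification of each $H_i$ as an invariant of $x+H$, which is what promotes lemma \ref{projlem} from a statement about invariants to the desired vanishing of $H$ itself along the line $c + tp$. Everything else is a direct packaging of Krull's theorem and the earlier generic-point/completeness lemma.
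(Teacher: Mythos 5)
Your proposal is correct and follows essentially the same route as the paper: Krull's height theorem applied to the $n-\dim L$ linear forms in $H_1,\ldots,H_n$ cutting out $H^{-1}(L)$ for (1) $\Rightarrow$ (2), and lemma \ref{projlem} combined with the observation (via (1) $\Rightarrow$ (2) of proposition \ref{qtprop}) that each $H_i$ is an invariant for (2) $\Rightarrow$ (3). The paper states this last step in one line; you have merely spelled out the same argument in full.
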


\begin{proof} 
Assume that $L$ is a linear subspace of $\C^n$.
\begin{description}
 
\item [(1) \imp (2)]
Notice that $H^{-1}(L)$ is the zero set of $n - \dim L$ linear forms in 
$H_1, H_2, \ldots, \allowbreak H_n$.
By applying \cite[Ch.\@ I, Prop.\@ 7.1]{hartshorne} $n- \dim L - 1$ times, it follows that
every irreducible component of $H^{-1}(L)$ has dimension at least $\dim L$, which exceeds
$\dim V(H)$ if (1) is satisfied.

\item [(2) \imp (3)]
Assume $H(c) = 0$. Since $V(H) = H^{-1}(0) \subseteq H^{-1}(L)$, 
there exists an irreducible component $X$ of $H^{-1}(L)$ which contains $c$. Assuming (2),
we obtain $\dim X > \dim V(H)$, whence $H|_X$ is not the zero map. 
Hence (2) $\Rightarrow$ (3) follows from lemma \ref{projlem} and (1) $\Rightarrow$ (2)
of proposition \ref{qtprop}. \qedhere

\end{description}
\end{proof}

\begin{theorem} \label{XW}
Assume $x + H$ is a homogeneous quasi-translation over $\C$. Let 
$W$ be the Zariski closure of the image of $H$.
 
Then for every irreducible component $X$ of $V(H)$ such that
$\dim (X \cap W) \le n - \dim V(H)$, $X \cap W$ has an irreducible
component $Z$ of dimension $n - \dim V(H)$, such that $c + p \in X$
for all $c \in X$ and all $p$ in the linear span of $Z$.
\end{theorem}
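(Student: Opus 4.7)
The plan is to realise $Z$ as the image of a suitable incidence variety in $X\times W$ and then upgrade a generic translation-invariance property to a uniform one. For each $c\in V(H)$ set
$$
Y_c := \{p \in W : H(c+tp) = 0 \text{ in } \C[t]^n\},
$$
a Zariski-closed cone (the scaling $p\mapsto\lambda p$ is absorbed by $t\mapsto t/\lambda$). Lemma \ref{Vimlm} says $Y_c$ meets every linear subspace $L\subseteq\C^n$ of dimension strictly greater than $\dim V(H)$ outside the origin; projectivising and intersecting $\PP Y_c$ with a generic projective subspace of dimension $\dim V(H)$ in $\PP^{n-1}$ then forces $\dim Y_c \ge n-\dim V(H)$.

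Next I would form the closed incidence variety
$$
\Gamma := \{(c,p)\in X\times W : c+\C p\subseteq X\}.
$$
If $c\in X$ lies in no other irreducible component of $V(H)$, then the line $c+\C p\subseteq V(H)$ is forced into $X$ by irreducibility, so the fibre of the first projection $\pi_1\colon\Gamma\to X$ over such $c$ is exactly $Y_c$; since such $c$ form a dense open subset of $X$, this gives $\dim\Gamma \ge \dim X+(n-\dim V(H)) = n$. To see $\pi_2(\Gamma)\subseteq X\cap W$, I would note that if $c+\C p\subseteq X$ and $s\in\C^*$, then $c+(1/s)p\in X$, whence $sc+p = s(c+(1/s)p)\in X$ because $X$ is a cone; the Zariski closure of $\{sc+p : s\in\C^*\}$ is the whole line $\{sc+p : s\in\C\}$, which therefore lies in $X$, and in particular $p\in X$. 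Each fibre of $\pi_2|_\Gamma$ is a closed subset of $X$, so $\dim\pi_2(\Gamma)\ge\dim\Gamma-\dim V(H)\ge n-\dim V(H)$. Combined with the hypothesis $\dim(X\cap W)\le n-\dim V(H)$, this pins $\dim\Gamma$, $\dim\pi_2(\Gamma)$ and $\dim(X\cap W)$ to $n$, $n-\dim V(H)$ and $n-\dim V(H)$ respectively.

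I would then pick an irreducible component $\Gamma'$ of $\Gamma$ with $\dim\Gamma' = n$; the same fibre bounds applied to $\Gamma'$ force both $\dim\overline{\pi_2(\Gamma')} = n-\dim V(H)$ and the generic fibre of $\pi_2|_{\Gamma'}$ to have dimension $\dim X$, hence to equal $X$ by irreducibility. Setting $Z := \overline{\pi_2(\Gamma')}$ yields an irreducible subvariety of $X\cap W$ of dimension equal to $\dim(X\cap W)$, so $Z$ is an irreducible component of $X\cap W$. Moreover, for $p$ in a dense open $Z_0\subseteq Z$ the fibre condition reads $\{c\in X : c+\C p\subseteq X\} = X$, i.e.\ $p\in U(X):=\{p\in\C^n : X+p\subseteq X\}$.

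Finally I would verify that $U(X)$ is a linear subspace of $\C^n$: it is closed, being $\bigcap_{c\in X}(X-c)$; closed under addition via $(X+p)+q\subseteq X+q\subseteq X$; closed under negation, since $X\supseteq X+p\supseteq X+2p\supseteq\cdots$ stabilises among irreducible sets of a common dimension, forcing $X+p = X$; and closed under scalar multiplication because $\mu(X+p) = X+\mu p$ using that $X$ is a cone. Closedness of $U(X)$ upgrades $Z_0\subseteq U(X)$ to $Z\subseteq U(X)$, and linearity then gives $\operatorname{span}(Z)\subseteq U(X)$, which is the desired translation property. The hard part is the dimension bookkeeping in the middle two paragraphs: one must see that a single irreducible component of $\Gamma$ simultaneously realises the maximal projected dimension $n-\dim V(H)$ and the maximal generic-fibre dimension $\dim V(H)$, and this is made possible precisely because the hypothesis $\dim(X\cap W)\le n-\dim V(H)$ squeezes $\dim\Gamma$ to exactly $n$.
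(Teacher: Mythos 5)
Your argument is sound in substance but follows a genuinely different route from the paper's. The paper works with a maximal linear subspace $M$ satisfying $X + M \subseteq X$: if $M \cap X \cap W$ already has dimension $n - \dim V(H)$ it is done, and otherwise it cuts $X \cap W$ with a generic linear subspace $L$ of dimension $\dim V(H)+1$, uses lemma \ref{Vimlm} to attach to each $c$ in the interior $X^{\circ}$ one of the finitely many lines of $L \cap X \cap W$, and pigeonholes over the irreducible $X$ to produce a single line $P \not\subseteq M$ with $X + P \subseteq X$, contradicting maximality of $M$. You instead obtain the whole component $Z$ in one stroke: lemma \ref{Vimlm} plus the generic-linear-section argument gives $\dim Y_c \ge n - \dim V(H)$ for every $c$, the incidence variety $\Gamma$ then has large dimension, and the hypothesis $\dim(X\cap W) \le n - \dim V(H)$ forces the second projection to be spread thin, so that every fibre of $\pi_2$ on a top-dimensional component of $\Gamma$ is all of $X$. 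Your verification that $U(X) = \{p : X + p \subseteq X\}$ is a closed linear subspace cleanly replaces the paper's choice of $M$ as a linear space and its line-by-line enlargement. Both proofs draw all their quasi-translation content from lemma \ref{Vimlm}; yours trades the paper's pigeonhole on finitely many lines for standard fibre-dimension bookkeeping, and it makes more transparent exactly where the hypothesis on $\dim(X \cap W)$ enters.

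There is one slip to repair: you silently assume $\dim X = \dim V(H)$ when you write $\dim\Gamma \ge \dim X + (n - \dim V(H)) = n$, when you later ask for a component $\Gamma'$ of dimension exactly $n$, and when you bound the fibres of $\pi_2$ by $\dim V(H)$. An irreducible component $X$ of $V(H)$ need not have maximal dimension, and corollary \ref{hmgrk3cor} applies the theorem to components whose dimension is only pinned down afterwards, so this case cannot be discarded. The repair is mechanical: every fibre of $\pi_2|_{\Gamma}$ is a closed subset of $X$, hence of dimension at most $d := \dim X$, so you get $\dim\Gamma \ge d + (n - \dim V(H))$, a component $\Gamma'$ of at least that dimension, $\dim\overline{\pi_2(\Gamma')} \ge \dim\Gamma' - d \ge n - \dim V(H)$, and then fibres of $\pi_2|_{\Gamma'}$ of dimension at least $\dim\Gamma' - (n - \dim V(H)) \ge d$, which is all the rest of your argument needs.
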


\begin{proof}
Let $X$ be an irreducible component of $V(H)$ such that
$\dim (X \cap W) \le n - \dim V(H)$. We can take a linear subspace
$M$ of $\C^n$, such that $c + p \subseteq X$ 
for all $c \in X$ and all $p \in M$, because $M = \{0\}^n$ suffices. 
Take $M$ as above such that $\dim M$ is as large as possible.
Suppose first that $\dim (M \cap X \cap W) = n - \dim V(H)$. 
Then $\dim (X \cap W) = n - \dim V(H)$ as well, so that $X \cap W$ 
has an irreducible component $Z \subseteq M$ of maximum dimension $n - \dim V(H)$. 
Since $M$ contains the linear span of $Z$, it follows from the definition of $M$ 
that $Z$ suffices.

Suppose next that $\dim (M \cap X \cap W) < n - \dim V(H)$. Take for $L$ a generic 
linear subspace of $\C^n$ of dimension $\dim V(H) + 1$, to obtain that 
$\dim \big(L \cap (M \cap X \cap W)\big) = 0$ and $\dim \big(L \cap (X \cap W)\big) \le 1$.
Since $X$ is an irreducible component of $V(H)$, the interior $X^{\circ}$ of $X$ as 
a closed subset of $V(H)$ is nonempty. 
Now take an arbitrary $c \in X^{\circ}$. On account of (1) $\Rightarrow$ (3) of lemma 
\ref{Vimlm}, there exists a nonzero $p \in L \cap W$, such that $H(c + t p) = 0$. Since $H$ is 
homogeneous, the set $L \cap W$ is a union of lines through the origin. Hence there exists a 
line $P \subseteq L \cap W$ though the origin, such that $c + P \subseteq V(H)$. 

Since $c \in X^{\circ}$ and $X$ is an irreducible component of $V(H)$, we deduce that 
$c + P \subseteq X$. In particular, $P \subseteq X$, so $P \subseteq L \cap X \cap W$.
But $\dim (L \cap X \cap W) \le 1$, so $L \cap X \cap W$ can only contain finitely
many lines through the origin, say that $Q$ is the finite set of these lines. Since
$X^{\circ}$ is dense in $X$ and $c$ was arbitrary, we can deduce that
$$
X = \bigcup_{P \in Q} \{ c \in X \mid c + P \subseteq X \}
$$
Since $X$ is irreducible, there exists a $P \in Q$ such that $c + P \subseteq X$ for all $c \in X$. 
Therefore we can replace $M$ by $M \oplus P$, which contradicts the maximality of $\dim M$.
\end{proof}

\begin{corollary} \label{hmgrk3cor}
Assume $x + H$ is a homogeneous quasi-translation over $\C$, such that 
$\dim V(H) \le 3$ and $\gcd\{H_1,H_2,\ldots,H_n\} = 1$.
 
Then the Zariski closure $W$ of the image of $H$ is contained in $V(H)$.
Furthermore, every irreducible component $X$ of $V(H)$ which is not equal to $W$ 
is a $3$-dimensional linear subspace of $\C^n$ for which $\dim (X \cap W) = 2$.

If $W$ has a nonzero (projective) apex $p$ and $V(H)$ has a component $X$ which does not contain
$p$, then $W$ is contained in the $4$-dimensional linear subspace of $\C^n$ which is 
spanned by $X$ and $p$.
\end{corollary}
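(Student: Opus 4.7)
The first claim $W\subseteq V(H)$ is immediate from Proposition \ref{hmgprop}: since $H(H)=(0^1,0^2,\ldots,0^n)$, the image of $H$ lies in $V(H)$, and so does its Zariski closure $W$.

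For the main claim, I fix an irreducible component $X\neq W$ of $V(H)$ and plan to apply Theorem \ref{XW}. By Theorem \ref{hmgqt5th}(ii) and Lemma \ref{Wrk} I have the bounds $2\le\rk\jac H=\dim W\le\dim V(H)\le n-2\le 3$, hence $n-\dim V(H)\ge 2$. To verify the hypothesis $\dim(X\cap W)\le n-\dim V(H)$ I split cases: if $W$ is itself an irreducible component of $V(H)$, then $X\cap W$ is a proper closed subset of the irreducible $W$, so $\dim(X\cap W)<\dim W\le 3$; if not, then $\dim W<\dim V(H)\le 3$ forces $\dim W\le 2$ and therefore $\dim(X\cap W)\le 2$. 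Either way $\dim(X\cap W)\le 2\le n-\dim V(H)$. Theorem \ref{XW} then yields an irreducible component $Z$ of $X\cap W$ with $\dim Z=n-\dim V(H)$ such that $c+p\in X$ for every $c\in X$ and every $p$ in the linear span of $Z$. Because $H$ is homogeneous, $X$ is a cone through $0$; applying the conclusion with $c=0$ gives $\operatorname{span}(Z)\subseteq X$, and combining with the cone property of $X$ yields $\operatorname{span}(Z)+\C c\subseteq X$ for every $c\in X$. Passing to the quotient $\C^n/\operatorname{span}(Z)$, the image of $X$ is an irreducible cone of dimension $\dim X-\dim\operatorname{span}(Z)\le 3-2=1$, so it is either the origin or a line through it, forcing $X$ itself to be a linear subspace. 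Finally $\dim X=3$: if $\dim X=2$, then $\operatorname{span}(Z)\subseteq X$ with $\dim\operatorname{span}(Z)\ge 2$ forces $X=\operatorname{span}(Z)$ and $Z=X\subseteq W$; since $X$ is a maximal irreducible closed subset of $V(H)$ and $W$ is an irreducible closed subset of $V(H)$ containing $X$, this forces $X=W$, a contradiction. From $2=\dim Z\le\dim(X\cap W)\le 2$ we then conclude $\dim(X\cap W)=2$.

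For the last claim, let $p$ be a nonzero projective apex of $W$ and $X$ a component of $V(H)$ with $p\notin X$. Then $p\in W\setminus X$ forces $X\neq W$, so by the previous part $X$ is a three-dimensional linear subspace with $\dim(X\cap W)=2$, and $L:=\operatorname{span}(X,p)$ is four-dimensional. Since $X$ is linear, $\operatorname{span}(X\cap W)\subseteq X$, so $p\notin\operatorname{span}(X\cap W)$, and the apex property then gives $(X\cap W)+\C p\subseteq W\cap L$ of dimension $3$. As $W$ is irreducible with $\dim W\le 3$, this forces $\dim W=3$ and $W=W\cap L\subseteq L$.

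The main obstacle is the passage from Theorem \ref{XW} to the structural statement that $X$ is a three-dimensional linear subspace: a quotient-by-$\operatorname{span}(Z)$ argument converts the cone plus translation-closure structure of $X$ into linearity, and the maximality of $X$ as a component of $V(H)$ is used to exclude the two-dimensional case.
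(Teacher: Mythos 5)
Your proof is correct and follows essentially the same route as the paper: both reduce the structure of a component $X\neq W$ to theorem \ref{XW} (after checking $\dim(X\cap W)\le 2\le n-\dim V(H)$) and then combine the resulting invariance of $X$ under translation by $\operatorname{span}(Z)$ with the cone structure of $X$ to force $X$ to be a threedimensional linear subspace, ruling out $\dim X=2$ via $X\subseteq W$. The only cosmetic differences are your quotient-by-$\operatorname{span}(Z)$ phrasing of the linearity step, and your direct dimension count $(X\cap W)+\C p\subseteq W\cap L$ for the apex claim, where the paper instead argues that the infinitely many GN-planes $\operatorname{span}(p,q)$ with $q\in X\cap W$ are dense in $W$.
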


\begin{proof}
Using (2) $\Rightarrow$ (3) of proposition \ref{hmgprop} and lemma \ref{Wrk}, we deduce that 
$W$ is irreducible and that $W \subseteq V(H)$. 
Let $X$ be an irreducible component of $V(H)$ which is not equal to $W$.
Since $X \ne W$ and $\dim V(H) \le 3$, we have $\dim (X \cap W) \le 2$.
From $\gcd\{H_1,H_2,\ldots, H_n\} = 1$, we deduce that
$\dim (X \cap W) \le 2 \le n - \dim V(H)$. On account of theorem \ref{XW}, $X \cap W$ has an 
irreducible component $Z$ of dimension $n - \dim V(H) = 2 = \dim (X \cap W)$, such that 
$c + q \in X$ for all $c \in X$ and all $q$ in the linear span of $Z$.

Notice that $\dim X \le \dim V(H) \le 3$.
Suppose that $\dim X \le 2$. Then $X \subseteq W$ because $X$ is irreducible
and $\dim (X \cap W) = 2$. Since $W$ is irreducible, this contradicts the fact that  
$X$ is an irreducible component of $V(H)$ which is not equal to $W$. Thus $\dim X = 3$.
Let $r$ be the dimension of the linear span of $Z$. If $r \ge 3$, then 
$X$ contains the linear span of $r$ independent $q \in Z$, whence 
$X$ is equal to the linear span of $r = 3$ 
independent $q \in Z$. If $r \le 2$, then $r = 2$ because 
$\dim Z = 2$, and $X$ is the linear span of two independent $q \in Z$,
and any $c \in X \setminus Z$.

Suppose that $W$ has a nonzero (projective) apex $p$ and $V(H)$ has a component $X$
which does not contain $p$. Since $\dim (X \cap W) = 2$, there are infinitely many
GN-planes spanned by $p$ and a nonzero $q \in X \cap W$. Any proper algebraic subset
of $W$ can only have finitely many GN-planes, because $W$ is irreducible and $\dim W = 3$.
Hence the set of infinitely many GN-planes spanned by $p$ and a nonzero $q \in X \cap W$ is dense
in $W$. It follows that $W$ is contained in the linear span of $X$ and $p$.
\end{proof}

\begin{corollary} \label{hmgrk2cor}
Assume $x + H$ is a homogeneous quasi-translation over $\C$, such that 
$\rk \jac H + \dim V(H) \le n$. Then $H(c + p) = 0$
for all $c \in V(H)$ and all $p$ in the linear span of the image of $H$.
In particular, $x + H$ has at least $\rk \jac H$ linear invariants.
\end{corollary}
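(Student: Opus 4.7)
The plan is to reduce the corollary to a single application of Theorem \ref{XW} to each irreducible component of $V(H)$, after first arguing that the hypothesis actually forces equality in $\rk \jac H + \dim V(H) \le n$.

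First I would let $W$ be the Zariski closure of the image of $H$. Lemma \ref{Wrk} gives that $W$ is irreducible with $\dim W = \rk \jac H$, and (assuming $\deg H \ge 1$, so that $H$ has no constant part) also $\dim V(H) \ge n - \rk \jac H$. Combined with the hypothesis this pins down $\dim W = n - \dim V(H)$. By (2) $\Rightarrow$ (3) of Proposition \ref{hmgprop} we have $H(H) = 0$, so $W \subseteq V(H)$.

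Next, for any irreducible component $X$ of $V(H)$, the inclusion $X \cap W \subseteq W$ yields $\dim(X \cap W) \le \dim W = n - \dim V(H)$, so the hypothesis of Theorem \ref{XW} is satisfied. The theorem produces an irreducible component $Z$ of $X \cap W$ with $\dim Z = n - \dim V(H) = \dim W$. Since $Z$ is an irreducible subvariety of the irreducible variety $W$ of the same dimension, $Z = W$, so $W \subseteq X$ and the linear span of $Z$ is just the linear span of $W$, i.e.\ the linear span $L$ of the image of $H$. Theorem \ref{XW} then asserts $c + p \in X$ for every $c \in X$ and every $p \in L$. Since each $c \in V(H)$ lies in some such $X$, this gives $H(c + p) = 0$ for all $c \in V(H)$ and all $p \in L$, which is the first claim.

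The linear invariants count drops out at once: by homogeneity $0 \in V(H)$, so setting $c = 0$ above yields $L \subseteq V(H)$ and hence $\dim L \le \dim V(H) = n - \rk \jac H$. The space of linear forms in $\C[x]$ vanishing on $L$ then has dimension $n - \dim L \ge \rk \jac H$, and by \eqref{fxtHeqv} every such form is an invariant of $x + H$. I expect no real obstacle: the substantive geometric content sits inside Theorem \ref{XW}, and the only step requiring care is ensuring the numerology works out so that the component $Z$ delivered by Theorem \ref{XW} is forced to coincide with $W$.
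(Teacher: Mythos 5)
Your proposal is correct and follows essentially the same route as the paper: reduce to Theorem \ref{XW} applied to an arbitrary irreducible component $X$ of $V(H)$, use the dimension count forced by the hypothesis (together with Lemma \ref{Wrk} and Proposition \ref{hmgprop}) to conclude that the component $Z$ it produces must equal $W$, and then read off the invariants from $L \subseteq V(H)$. The only cosmetic difference is that you identify $Z = W$ directly from $Z \subseteq W$ and equality of dimensions, whereas the paper first rules out $W \nsubseteq X$ by contradiction; both are valid.
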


\begin{proof}
The case where $\deg H \le 0$ is easy, so assume that $\deg H \ge 1$.
Let $W$ be the Zariski closure of the image of $H$ and $X$ be an irreducible component of $V(H)$.
From lemma \ref{Wrk}, it follows that $W$ is irreducible and that
$\dim (X \cap W) \le \dim W = \rk \jac H \le n - \dim V(H)$.
Using theorem \ref{XW}, we subsequently deduce that $X \cap W$ has an irreducible component $Z$ of 
dimension $n - \dim V(H)$, such that $c + p \in X$ for all $c \in X$ and all $p$ in the linear span of $Z$. 

If $W \nsubseteq X$, then by the irreducibility of $W$, $\dim Z \le \dim (X \cap W) < \dim W = 
\rk \jac H \le n - \dim V(H)$, which contradicts $\dim Z = n - \dim V(H)$. Hence $W \subseteq X$, 
and by irreducibility of $W$ once again, the only irreducible component of $X \cap W$ is $W$. 
Thus $Z = W$. Furthermore, $X$ is an arbitrary irreducible component of $V(H)$, so $c + p \in V(H)$ 
for all $c \in V(H)$ and all $p$ in the linear span of $W$. 

Consequently, $H(c + p) = 0$ for all $c \in V(H)$ and all $p$ 
in the linear span of the image of $H$. Furthermore, the dimension of the linear span of the image 
of $H$ does not exceed the dimension of $V(H)$. So there are at least $r := n - \dim V(H) \ge 
\rk \jac H$ independent linear forms $l_1, l_2, \ldots, l_r$ which vanish on the image of $H$.
Hence $l_i(H) = 0$ and $l_i(x + H) = l_i(x)$ for each $i$, as desired.
\end{proof}

\end{document}